\newcommand{\dup}[2]{\langle#1,#2 \rangle}
\newcommand{\ip}[2]{(#1,#2)}
\newcommand{\Co}{\mathrm{Co}} 
\newcommand{\cdual}{{\ast}}
\newcommand{\supp}{\mathrm{supp}}
\renewcommand{\theenumi}{\alph{enumi}} 
\newtheorem{theorem}{Theorem}[section]
\newtheorem{lemma}[theorem]{Lemma}
\newtheorem{assumption}[theorem]{Assumption}
\newtheorem{corollary}[theorem]{Corollary}
\newtheorem{proposition}[theorem]{Proposition}
 \theoremstyle{definition}
\newtheorem{definition}[theorem]{Definition}
\newtheorem{remark}[theorem]{Remark}
\newcommand{\SL}{\mathrm{SL}_2(\mathbb{R})}
\begin{document}

\title{Coorbit Spaces for Dual Pairs} 
\subjclass[2000]{Primary
  43A15,42B35; Secondary 22D12} \keywords{Coorbit spaces, 
  Gelfand Triples, Representation theory of Locally Compact
  Groups} 

\author{Jens Gerlach Christensen} 
\address{2307 Mathematics Building, Department of mathematics, 
  University of Maryland, College Park} 
\email{jens@math.umd.edu}
\urladdr{http://www.math.umd.edu/\textasciitilde jens}
\thanks{The first author gratefully acknowledges support from the
  Louisiana Board of Regents under grant LEQSF(2005-2007)-ENH-TR-21
  and NSF grant DMS-0801010}

\author{Gestur \'Olafsson}
\address{322 Lockett Hall, Department of mathematics, 
  Louisiana State University} 
\email{olafsson@math.lsu.edu}
\urladdr{http://www.math.lsu.edu/\textasciitilde olafsson}
\thanks{The research of the second author was supported by
  NSF grants DMS-0402068 and DMS-0801010}

\begin{abstract}
In this paper we present an abstract framework for construction
of Banach spaces of distributions from group representations.
This generalizes the theory of coorbit spaces
initiated by H.G. Feichtinger and K. Gr\"ochenig
in the 1980's. Spaces that can be described by this new technique
include the whole Banach-scale of Bergman spaces
on the unit disc. For these Bergman 
spaces we show that atomic decompositions can be constructed through
sampling.
We further present a wavelet characterization of Besov spaces
on the forward light cone.
\end{abstract}

\maketitle
\begin{center} {\today}
\end{center}

\section{Introduction}
\label{sec:intro}
In the 1980's H.G. Feichtinger and K. Gr\"ochenig 
presented a unified framework for generation of
Banach spaces of distributions using group representations.
Their results published in 
\cite{Feichtinger1988,Feichtinger1989a,Feichtinger1989b} and
\cite{Grochenig2001}
are based on a unitary irreducible representation $(\pi,H)$ 
of a locally compact group $G$ with left-invariant Haar measure
$dx$. 
The construction of Feichtinger and Gr\"ochenig 
requires that the space
of analyzing vectors
\begin{equation*}
  \mathcal{A}_w = \left\{u\in H \,\Big|\, \int_G |(\pi(x)u,u)|w(x)\,dx < \infty \right\}
\end{equation*}
is non-zero for a submultiplicative weight $w:G\mapsto\mathbb{R}^+$.
Here $(u,v)$ is the inner product of  $u,v\in H$. 
For a non-zero analyzing vector $u$ the space
\begin{equation*}
  H_w^1 = \left\{v\in H \,\Big|\, \int_G |(\pi(x)u,v)|w(x)\,dx < \infty \right\}
\end{equation*}
is a Banach space, which does not depend on the chosen $u\in \mathcal{A}_w$.
Denote by $(H_w^1)^*$ 
the conjugate dual of $H_w^1$. For a left-invariant Banach
function space $Y$ on $G$ define
\begin{equation*}
  \Co Y = \{ v\in (H_w^1)^* \,|\, 
  \text{$(x\mapsto \dup{v}{\pi(x)u})$ is in $Y$}  \}.
\end{equation*}
Feichtinger and Gr\"ochenig show, that
$\Co Y$ is a $\pi$-invariant Banach space of distributions which
is isometrically isomorphic to a reproducing kernel Banach subspace
of $Y$. Further they construct atomic decompositions and frames
for these spaces in the case where the analyzing vector
$u$ is chosen such that $W_u(u)$ is in a certain Wiener amalgam space
(see for example Lemma 4.6(i) in \cite{Grochenig1991}).

In the article \cite{Christensen2009} 
the authors gave examples of coorbits for which the space
$\mathcal{A}_w$ is the zero space, yet both the construction
of $\Co Y$ and atomic decompositions yield non-trivial results.
The present article proposes a generalized coorbit theory, which
is able to account for the examples from \cite{Christensen2009}.
The idea is to replace the space $H_w^1$ with a Fr\'echet
space $S$. For square integrable representations of
Lie groups the space of smooth vectors is a natural choice.
As an example, the smooth vectors of the discrete series representation
of the group
\begin{equation*}
  G = \Big\{ 
  \begin{pmatrix}
    a & b \\ 0 & a^{-1}
  \end{pmatrix}\, \Big|\, a>0,b\in\mathbb{R}
  \Big\}
  \subseteq \mathrm{SL}_2(\mathbb{R})
\end{equation*}
are used to give a complete wavelet characterization of the 
Bergman spaces of holomorphic functions on the unit disc.
We further present a wavelet characterization of the
Besov spaces on the forward light cone as defined in 
\cite{Bekolle2004}. This example can be described by
the theory of Feichtinger and Gr\"ochenig, however we
include it here as it is of interest in its own right.
We expect that this construction will generalize to other
symmetric cones.

\section{Coorbit Spaces for Dual Pairs}
\label{sec:coorbit-spaces-dual}
\noindent

Let $S$ be a Fr\'echet space and let $S^\cdual$ be the 
conjugate linear dual equipped with the weak topology. 
We assume that $S$ is
continuously imbedded and 
weakly dense in $S^\cdual$. The conjugate
dual pairing of elements $v\in S$ and $\phi\in S^\cdual$ will be denoted
by $\dup{\phi}{v}$. Let $G$ be a locally compact 
group with a fixed left Haar measure
$dx$, and assume that $(\pi,S)$ is a continuous representation of
$G$, i.e. $g\mapsto \pi(g)v$
is continuous for all $v\in S$.  As usual, define the contragradient
representation $(\pi^\cdual,S^\cdual)$ by
\begin{equation*}
  \dup{\pi^\cdual(x)\phi}{v}=\dup{\phi}{\pi(x^{-1})v}.
\end{equation*}
Then $\pi^*$ is a continuous representation of $G$ on $S^\cdual$. 
For a fixed vector 
$u\in S$ define the linear map $W_u:S^*\to C(G)$ by
\begin{equation*}
  W_u(\phi)(x) = \dup{\phi}{\pi(x)u} = \dup{\pi^*(x^{-1})\phi}{u}.
\end{equation*}
The map $W_u$ is called \emph{the voice transform} or 
\emph{the wavelet transform}.
If $F$ is a function on $G$ then define the left translation of $F$
by an element $x\in G$ as
\begin{equation*}
  \ell_x F(y) = F(x^{-1}y)
\end{equation*}
If $Y$ is a space of functions, then $Y$ is called 
invariant if $\ell_x F\in Y$ when $F\in Y$.
In the following we will always assume that 
the space $Y$ of functions on $G$ is a Banach space
for which left translation is continuous and convergence 
implies convergence (locally) in Haar measure on $G$.
Examples of such spaces are $L^p(G)$ for $1\leq p \leq \infty$ 
and any space continuously included in an $L^p(G)$.

\begin{assumption}\label{assumption1}
  Assume that there is a non-zero cyclic vector $u\in S$ 
  satisfying the following properties
  \begin{enumerate}
    \renewcommand{\labelenumi}{(R\arabic{enumi})}
    \renewcommand{\theenumi}{(R\arabic{enumi})}
  \item the reproducing formula $W_{u}(v)*W_{u}(u)=W_{u}(v)$ is true
    for all $v\in S$ \label{r1}
  \item 
    the mapping
    $Y\ni F\mapsto \int_G F(x)W_u(u)(x^{-1})\,dx \in \mathbb{C}$
    is continuous \label{r2}
  \item if $F=F*W_u(u)\in Y$ then the mapping
    $S \ni v \mapsto \int F(x) \dup{\pi^\cdual(x)u}{v} \,dx
    \in\mathbb{C}$ 
    is in $S^\cdual$\label{r3}
  \item the mapping 
    $S^\cdual\ni \phi \mapsto \int \dup{\phi}{\pi(x)u} 
    \dup{\pi^\cdual(x)u}{u}  \,dx \in\mathbb{C}$ 
    is weakly continuous \label{r4}
  \end{enumerate}
\end{assumption}
A vector $u$ satisfying Assumption~\ref{assumption1} 
is called an \emph{analyzing
vector}. 
Note that assumption \ref{r2} and the left invariance of
$Y$ ensure that the convolution
\begin{equation*}
  F*W_u(u)(y) = \int_G F(x)W_u(u)(x^{-1}y) \,dx
\end{equation*}
is well-defined for all $F\in Y$ at every point $y \in G$.
If $F=F*W_u(u) \in Y$ then \ref{r3} implies the existence of a
unique $\phi\in S^*$ such that
\begin{equation*}
  \dup{\phi}{v}
  = \int_G F(x)\dup{\pi^*(x)u}{v} \,dx.
\end{equation*}
This $\phi$ is denoted 
$\pi^*(F)u$.
Also \ref{r4} implies that there is an element
$v\in S$ such that
\begin{equation*}
  \dup{\phi}{v} = \int \dup{\phi}{\pi(x)u}\dup{\pi^\cdual(x)u}{u} \,dx
\end{equation*}
for all $\phi\in S^\cdual$. 
This ensures that the vector $v\in S$ can
be weakly defined by
\begin{equation*}
  v = \pi(W_u(u)^\vee)u = \int_G W_u(u)^\vee(x)\pi(x)u\, dx
\end{equation*}
where we have used the notation 
$f^\vee(x) = f(x^{-1})$.

Define the subspace $Y_u$ of $Y$ by 
$$Y_u = \{ F\in Y\, |\, F=F*W_u(u) \},$$ then
the following result holds 
\begin{lemma}
  If $F$ and $u$ satisfy \ref{r2}, then the space $Y_u$ is closed and
  hence a reproducing kernel Banach space.
\end{lemma}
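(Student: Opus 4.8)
The plan is to turn property \ref{r2} into continuity of the point evaluations of the convolution operator $F\mapsto F*W_u(u)$, and then to combine this with the standing hypothesis that norm convergence in $Y$ implies convergence locally in Haar measure. Fix $y\in G$. Using left invariance of $dx$ (substituting $x\mapsto yx$) one rewrites
\begin{align*}
  (F*W_u(u))(y)&=\int_G F(x)W_u(u)(x^{-1}y)\,dx\\
  &=\int_G (\ell_{y^{-1}}F)(x)\,W_u(u)(x^{-1})\,dx .
\end{align*}
Thus $F\mapsto (F*W_u(u))(y)$ is the composition of the map $\ell_{y^{-1}}\colon Y\to Y$, which is bounded since left translation is continuous on $Y$ by hypothesis, with the functional $F\mapsto\int_G F(x)W_u(u)(x^{-1})\,dx$, which is bounded by \ref{r2}. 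Hence for every $y$ the functional $F\mapsto (F*W_u(u))(y)$ is continuous on $Y$; in particular $F*W_u(u)$ is defined at every point of $G$ and depends pointwise continuously on $F\in Y$.

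For closedness, let $F_n\in Y_u$ with $F_n\to F$ in $Y$. Taking for each $n$ the everywhere-defined representative $F_n=F_n*W_u(u)$, the previous paragraph gives $F_n(y)\to (F*W_u(u))(y)$ for every $y\in G$, i.e.\ $F_n\to F*W_u(u)$ pointwise on $G$. On the other hand $F_n\to F$ in $Y$, hence locally in Haar measure; since compact sets have finite Haar measure, pointwise convergence of $F_n$ on a compact set forces convergence in measure there, and uniqueness of limits in measure yields $F=F*W_u(u)$ almost everywhere. Therefore $F\in Y_u$, so $Y_u$ is sequentially closed, hence closed, in $Y$ and is itself a Banach space in the inherited norm.

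For the reproducing-kernel statement, on $Y_u$ one uses the distinguished representative $F*W_u(u)$ of each element, so that point evaluation $F\mapsto F(y)$ coincides with the functional $F\mapsto (F*W_u(u))(y)$ of the first paragraph and is therefore continuous. Thus $Y_u$ is a Banach space of functions on $G$ with continuous point evaluations, i.e.\ a reproducing kernel Banach space, the kernel being supplied by the translates $y\mapsto W_u(u)(x^{-1}y)$ through the reproducing identity $F=F*W_u(u)$.

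The only delicate point is bookkeeping about representatives: elements of $Y$ are $dx$-equivalence classes, so ``$F(y)$'' becomes meaningful only after one selects the canonical representative $F*W_u(u)$, and one must reconcile norm convergence in $Y$ (equivalently, local convergence in measure) with the pointwise convergence coming from \ref{r2}; this matching of the two modes of convergence, via finiteness of Haar measure on compacta, is the heart of the argument, everything else being formal.
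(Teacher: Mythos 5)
Your proof is correct and follows essentially the same route as the paper: both arguments turn \ref{r2} together with the continuity of left translation into continuity of the functional $F\mapsto (F*W_u(u))(y)$ for each fixed $y$, and then reconcile this pointwise convergence with the standing assumption that norm convergence in $Y$ implies local convergence in Haar measure. The only cosmetic difference is that the paper extracts an a.e.\ convergent subsequence and runs a three-term triangle inequality, whereas you conclude $F=F*W_u(u)$ a.e.\ via uniqueness of limits in measure on compacta; the two bookkeeping devices are interchangeable.
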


\begin{proof}
  Let $\{ F_n\}$ be a sequence in $Y_u$ which converges to
  $F\in Y$. Then, since we assumed that convergence in
  $Y$ implies convergence in measure, we know
  that there is a subsequence $F_{n_k}$ which converges to
  $F$ almost everywhere. $F*W_u(u)(y)$ is defined for all
  $y$ by assumption \ref{r2} and we see that for a fixed 
  $y$ outside
  a null-set
  \begin{align*}
    | F(y) - F*W_u(u)(y)|
    &\leq | F(y) - F_{n_k}(y)| \\ &\qquad +
    |F_{n_k}(y)- F_{n_k}*W_u(u)(y)| \\ &\qquad +
    |F_{n_k}*W_u(u)(y)- F*W_u(u)(y)|
  \end{align*}
  The first term can be made arbitrarily small and the second
  term is zero. The last term can be estimated by
  \begin{equation*}
    C\|\ell_{y^{-1}} F_{n_k} - \ell_{y^{-1}} F \|_Y
  \end{equation*}
  by assumption \ref{r2} and the left invariance of $Y$ ensures
  that it can be made arbitrarily small (using that 
  $F_{n_k}$ converges to
  $F$ in norm). Therefore $F=F*V_u(u)$ almost everywhere and
  $F\in Y_u$.
\end{proof}

Define the space
\begin{equation}
  \Co_S^u Y = \{ \phi \in S^\cdual \,|\, W_u(\phi) \in Y \}
\end{equation}
equipped with the norm $\| \phi \| = \| W_u(\phi)\|_Y$. 
The space $\Co_S^u Y$ is called the coorbit space of $Y$ related
to $u$ and $S$.

\begin{theorem} \label{mainthm} Assume that $Y$ and $u$ satisfy
  Assumption~\ref{assumption1}, then
  \begin{enumerate}
    \renewcommand{\labelenumi}{(\alph{enumi})}
    \renewcommand{\theenumi}{\alph{enumi}}
  \item $W_u(v)*W_u(u) = W_u(v)$ for $v\in \Co^u_S Y$. \label{prop1}
  \item The space $\Co_S^u Y$ is a $\pi^\cdual$-invariant Banach
    space. \label{prop2}
  \item $W_u:\Co_S^u Y\to Y$ intertwines $\pi^\cdual$ and left
    translation \label{prop3}
  \item $\Co_S^u Y = \{ \pi^\cdual(F)u | F\in
    Y_u\}$. \label{prop5}
  \item $W_u:\Co_S^u Y \to Y_u$ is an isometric
    isomorphism\label{prop6}
  \end{enumerate}
\end{theorem}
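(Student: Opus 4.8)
The plan is to prove the five parts in an order dictated by their dependencies, with part (\ref{prop1}) carrying essentially all of the work. I would begin with an auxiliary observation about the analyzing vector: the vector $v=\pi(W_u(u)^\vee)u\in S$ supplied by \ref{r4} equals $u$. Indeed, for $w\in S$,
\[
  \dup{w}{v}=\int_G \dup{w}{\pi(x)u}\,\dup{\pi^\cdual(x)u}{u}\,dx
  =\int_G W_u(w)(x)\,W_u(u)(x^{-1})\,dx=(W_u(w)*W_u(u))(e),
\]
which by \ref{r1} equals $W_u(w)(e)=\dup{w}{u}$. Since $S$ is weakly dense in $S^\cdual$ and both $\phi\mapsto\dup{\phi}{v}$ and $\phi\mapsto\dup{\phi}{u}$ are weakly continuous, $\dup{\phi}{v}=\dup{\phi}{u}$ for all $\phi\in S^\cdual$, and hence $v=u$ by nondegeneracy of the pairing.

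Next I would prove (\ref{prop1}), which I expect to be the main obstacle. Let $\phi\in\Co^u_S Y$, so $W_u(\phi)\in Y$ and, by \ref{r2} together with the left invariance of $Y$, the convolution $W_u(\phi)*W_u(u)$ is defined at every point. Using $\dup{u}{\pi(y^{-1}x)u}=\dup{\pi^\cdual(y)u}{\pi(x)u}$ and the left-invariant change of variable $y\mapsto xz$, I would rewrite
\[
  W_u(\phi)*W_u(u)(x)=\int_G \dup{\pi^\cdual(x^{-1})\phi}{\pi(z)u}\,\dup{\pi^\cdual(z)u}{u}\,dz ,
\]
which is precisely the integral in \ref{r4} evaluated at $\psi=\pi^\cdual(x^{-1})\phi\in S^\cdual$. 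By the auxiliary observation this integral equals $\dup{\psi}{u}=\dup{\phi}{\pi(x)u}=W_u(\phi)(x)$, so $W_u(\phi)*W_u(u)=W_u(\phi)$; note this step used only \ref{r1}, \ref{r2} and \ref{r4}.

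Part (\ref{prop3}), and with it the $\pi^\cdual$-invariance asserted in (\ref{prop2}), follows from the direct computation $W_u(\pi^\cdual(x)\phi)(y)=\dup{\phi}{\pi(x^{-1}y)u}=W_u(\phi)(x^{-1}y)=\ell_xW_u(\phi)(y)$ and the invariance of $Y$. That $\|\phi\|=\|W_u(\phi)\|_Y$ is a genuine norm follows from cyclicity of $u$: if $W_u(\phi)=0$ then $\dup{\phi}{\pi(x)u}=0$ for all $x$, so continuity of $\phi$ on $S$ forces $\phi=0$ (this also gives injectivity of $W_u$). For (\ref{prop5}), (\ref{prop6}) and completeness the key identity is $W_u(\pi^\cdual(F)u)=F*W_u(u)=F$ for $F\in Y_u$, a short computation from \ref{r3}, the definition of $\pi^\cdual(F)u$, and $\dup{\pi^\cdual(x)u}{\pi(y)u}=W_u(u)(x^{-1}y)$ (the last equality $F*W_u(u)=F$ being the defining property of $Y_u$).

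This identity gives the inclusion $\{\pi^\cdual(F)u\mid F\in Y_u\}\subseteq\Co^u_S Y$ and surjectivity of $W_u:\Co^u_S Y\to Y_u$; conversely, for $\phi\in\Co^u_S Y$ part (\ref{prop1}) puts $F:=W_u(\phi)$ in $Y_u$, and then $W_u(\pi^\cdual(F)u)=F=W_u(\phi)$ together with cyclicity forces $\phi=\pi^\cdual(F)u$, which establishes (\ref{prop5}); isometry in (\ref{prop6}) is the definition of the norm. Finally, for completeness of $\Co^u_S Y$: a Cauchy sequence $\{\phi_n\}$ has $\{W_u(\phi_n)\}$ Cauchy in $Y$, converging to some $F$; by (\ref{prop1}) the $W_u(\phi_n)$ lie in $Y_u$, which is closed by the Lemma, so $F\in Y_u$ and $\pi^\cdual(F)u\in\Co^u_S Y$ is the limit of $\{\phi_n\}$. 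Thus the only nonroutine point is the rearrangement in part (\ref{prop1}); the rest is bookkeeping with the conjugate dual pairings.
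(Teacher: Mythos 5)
Your proposal is correct and takes essentially the same route as the paper: the reproducing formula for general $\phi$ rests on \ref{r1} on $S$, weak density of $S$ in $S^\cdual$, the weak continuity in \ref{r4}, and the same change-of-variables rearrangement, with your auxiliary identity $\pi(W_u(u)^\vee)u=u$ merely packaging the paper's net-limit argument at the identity and then translating by $\pi^\cdual(x^{-1})$. Parts (b)--(e) (norm via cyclicity, intertwining, completeness through the closedness of $Y_u$ and the identity $W_u(\pi^\cdual(F)u)=F$, and the characterization $\Co_S^u Y=\{\pi^\cdual(F)u\mid F\in Y_u\}$) coincide with the paper's arguments.
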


\begin{proof}

  (\ref{prop1}) We show that the reproducing formula holds
  for all $\phi\in S^*$. 
  The space $S$ is weakly dense in $S^\cdual$, so
  choose a net $v_\alpha$ in $S$ for which $v_\alpha \to
  \phi$ weakly in $S^*$. By assumption \ref{r1} the
  reproducing formula $W_u(v_\alpha)*W_u(u) = W_u(v_\alpha)$ 
  holds for each $v_\alpha$.  The continuity requirement \ref{r4}
  gives that
  \begin{align*}
    \phi\mapsto 
    &\int \dup{\pi^\cdual(y^{-1})\phi}{\pi(x)u} \dup{\pi^\cdual(x)u}{u} \,dx\\
    &=\int \dup{\phi}{\pi(x)u} \dup{u}{\pi(x^{-1}y)u} \,dx \\
    &= W_u(\phi)*W_u(u)(y)
  \end{align*}
  is weakly continuous. Therefore $W_u(v_\alpha)*W_u(u)(y) \to
  W_u(\phi)*W_u(u)(y)$ for every $y\in G$.  By assumption
  $W_u(v_\alpha)(y)\to W_u(\phi)(y)$ for all $y\in G$, and we conclude
  that 
  \begin{equation*}
    W_u(\phi)(y) = W_u(\phi)*W_u(u)(y)\qquad \text{for all $y\in G$.}     
  \end{equation*}
  This reproducing formula is valid for all $\phi\in S^*$ and
  hence also for $\phi\in \Co_S^uY  \subseteq S^\cdual$.

  (\ref{prop2},\ref{prop3})
  We now show that $\| \phi\| = \| W_u(\phi)\|_Y$ is indeed a norm. The
  only non-obvious property is that $\| \phi\|=0$ implies $\phi=0$. If 
  $\| \phi\|=0$ then $\| W_u(\phi)\|_Y =0$ and so $\dup{\phi}{\pi(x)u}=0$ for
  almost all $x$.  The function $x\mapsto \dup{\phi}{\pi(x)u}$ is
  continuous and thus it is identically zero for all $x$. But
  $u$ is cyclic in $S$, so $\dup{\phi}{v}=0$ for all
  $v\in S$. Thus $\phi=0$.  This also proves the
  injectivity of $W_u$.

  Next we prove that the space $\Co_S^u Y$ is complete.
  Assume that $v_n$ is a Cauchy sequence
  in $\Co_S^u Y$. Then $W_u(v_n)$ is a Cauchy sequence in $Y_u$ and
  $W_u(v_n)$ converges to a function $F\in Y_u$. 
  Assumption \ref{r3} implies that 
  $\phi\in S^\cdual$ defined by
  \begin{equation*}
    \dup{\phi}{v} = \int F(x)\dup{\pi^\cdual(x)u}{v} \,dx
  \end{equation*}
  is in $S^*$,
  and it follows that
  \begin{align*}
    W_u(\phi)(y)
    &= \dup{\phi}{\pi(y)u} \\
    &= \int F(x)\dup{\pi^\cdual(x)u}{\pi(y)u} \,dx \\
    &= \int F(x)\dup{u}{\pi(x^{-1}y)u} \,dx \\
    &= F*W_u(u)(y) \\
    &= F(y).
  \end{align*}
  Thus $\phi\in \Co_S^u Y$.

  The definition of $\pi^*$ and the left invariance of $Y$ 
  ensure that $\Co_S^u Y$ is $\pi^*$-invariant and
  that $W_u$ intertwines $\pi^*$ and left translation:
  Assume that $\phi$ is in $\Co_S^u Y$, then the voice transform of
  $\pi^\cdual(y)\phi$ is
  \begin{equation*}
    W_u(\pi^\cdual(y)\phi)(x)
     = \dup{\pi^\cdual(y)\phi}{\pi(x)u} 
     = \dup{\phi}{\pi(y^{-1}x)u}
     = \ell_yW_u(\phi)(x).
   \end{equation*}

  (\ref{prop6}) We now show that $W_u(\Co_S^u Y)=Y_u$. 
  If $\phi\in \Co_S^uY$ then $W_u(\phi)\in Y$ and also
  $W_u(\phi)=W_u(\phi)*W_u(u) \in Y_u$.  If on the other hand $F\in
  Y_u$ then $F=F*W_u(u)$ and assumption \ref{r3} again tells us
  that there is a $\phi\in S^\cdual$ defined by
  \begin{equation*}
    \dup{\phi}{v} = \int F(x)\dup{\pi^\cdual(x)u}{v} \,dx
  \end{equation*}
  for $v\in S$. Direct calculation shows that
  \begin{equation*}
    W_u(\phi) = F*W_u(u) =F \in Y
  \end{equation*}
  such that $\phi\in \Co_S^uY$. Therefore $W_u:\Co_S^u Y \to Y_u$
  is surjective.  That $W_u$ is an isometry follows directly from the
  definition of the norm.

  (\ref{prop5})  Above we have shown that for $F\in Y_u$ there is a
  $\phi\in \Co_S^u Y$ such that $\phi=\pi(F)u$. If on the other
  hand $\phi\in \Co_S^u Y$ then let $f=W_u(\phi)=f*W_u(u) \in
  Y*W_u(u)$. Then by \ref{r3} $\pi^\cdual(f)u$ defines an element in
  $S^\cdual$ and
  \begin{align*}
    \dup{\pi^\cdual(F)u}{\pi(y)u}
    &= \int F(x) \dup{\pi^\cdual(x)u}{\pi(y)u} \,dx\\
    &= F*W_u(u)(y)\\
    &= F(y) \\
    &= \dup{\phi}{\pi(y)u}.
  \end{align*}
  This shows that $\pi^\cdual(F)u$ and $\phi$ agree for all $\pi(y)u$,
  and since $u$ is cyclic in $S$, it follows that
  $\pi^\cdual(F)u=\pi^\cdual(F)u$ and $\phi$ are the same element
  in $S^\cdual$.
\end{proof}

\begin{theorem}\label{thm:subspace}
  If Assumption~\ref{assumption1} holds for $u$ and 
  a Banach function space $Y$ then it also holds for any
  quasi Banach space $\widetilde{Y}$ continuously included in $Y$.
  In particular 
  $\Co_S^u\widetilde{Y}$ is a well-defined quasi Banach space
  satisfying Theorem~\ref{mainthm} and
  $\Co_S^u\widetilde{Y}$ is continuously included in $\Co_S^u Y$.
\end{theorem}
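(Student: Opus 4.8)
The plan is to show that passing to $\widetilde{Y}$ changes nothing essential: first that Assumption~\ref{assumption1} transfers from $Y$ to $\widetilde{Y}$, and then that the proofs of the Lemma preceding Theorem~\ref{mainthm} and of Theorem~\ref{mainthm} itself go through with only cosmetic changes. For the transfer of the assumption I would treat \ref{r1}--\ref{r4} one at a time. Conditions \ref{r1} and \ref{r4} make no reference to $Y$ whatsoever --- they are statements about $S$, $\pi$ and $u$ --- so they hold for $u$ and $\widetilde{Y}$ simply because they hold for $u$ and $Y$. Condition \ref{r2} asserts continuity of a linear functional on $\widetilde{Y}$; but that functional factors as the continuous inclusion $\widetilde{Y}\hookrightarrow Y$ followed by the functional $F\mapsto\int_G F(x)W_u(u)(x^{-1})\,dx$ on $Y$, which is continuous by \ref{r2} for $Y$, so the composite is continuous. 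Condition \ref{r3} concerns an $F\in\widetilde{Y}$ with $F=F*W_u(u)$; since $\widetilde{Y}\subseteq Y$ such an $F$ also lies in $Y$ with $F=F*W_u(u)$, so \ref{r3} for $Y$ applies verbatim and produces the required element of $S^\cdual$. Hence Assumption~\ref{assumption1} holds for $u$ and $\widetilde{Y}$.

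Next I would confirm that $\widetilde{Y}$ still satisfies the standing hypotheses on function spaces that the earlier proofs use implicitly: left-invariance, continuity of left translation, and ``norm convergence implies convergence locally in Haar measure.'' The last of these is inherited from $Y$ via the continuous inclusion, and the first two are part of what it means for $\widetilde{Y}$ to be an admissible space (they are needed even to state $\Co_S^u\widetilde{Y}$ sensibly). Then I would reread the proof of the Lemma and of Theorem~\ref{mainthm}, checking that nothing uses the triangle inequality in an essential way once ``Banach'' is weakened to ``quasi-Banach.'' The only place the triangle inequality appears is the finite three-term estimate in the Lemma, where the quasi-triangle inequality of $\widetilde{Y}$ (multiplicative constant and all) does the job, so $\widetilde{Y}_u=\{F\in\widetilde{Y}\mid F=F*W_u(u)\}$ is still closed; the completeness argument in Theorem~\ref{mainthm} needs only that the quasi-norm is complete, which is built into ``quasi-Banach,'' together with the fact that $W_u$ is an isometry onto $\widetilde{Y}_u$ and hence transfers Cauchy sequences in both directions; and the norm-axiom check, the $\pi^\cdual$-invariance, the intertwining property, and the identifications $\Co_S^u\widetilde{Y}=W_u^{-1}(\widetilde{Y}_u)=\{\pi^\cdual(F)u\mid F\in\widetilde{Y}_u\}$ all survive word for word. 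This gives that $\Co_S^u\widetilde{Y}$ is a well-defined quasi-Banach space satisfying Theorem~\ref{mainthm} with $Y$ replaced by $\widetilde{Y}$.

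Finally, for the continuous inclusion $\Co_S^u\widetilde{Y}\hookrightarrow\Co_S^u Y$: if $\phi\in\Co_S^u\widetilde{Y}$ then $W_u(\phi)\in\widetilde{Y}\subseteq Y$, so $\phi\in\Co_S^u Y$, and writing $C$ for the norm of the inclusion $\widetilde{Y}\hookrightarrow Y$ we get $\|\phi\|_{\Co_S^u Y}=\|W_u(\phi)\|_Y\leq C\|W_u(\phi)\|_{\widetilde{Y}}=C\|\phi\|_{\Co_S^u\widetilde{Y}}$. I do not expect a genuine obstacle here; the one point demanding care is purely bookkeeping --- verifying that each step of the earlier proofs is insensitive to replacing the norm by a quasi-norm (triangle inequality used only finitely often, completeness the only analytic input) and that $\widetilde{Y}$ either inherits or is assumed to have the structural properties that were standing hypotheses on $Y$.
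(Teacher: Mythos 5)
Your proposal is correct and is precisely the argument the paper leaves implicit (the theorem is stated without a written proof): \ref{r1} and \ref{r4} do not involve $Y$, \ref{r2} for $\widetilde{Y}$ factors through the continuous inclusion $\widetilde{Y}\hookrightarrow Y$, \ref{r3} applies verbatim since any $F=F*W_u(u)\in\widetilde{Y}$ lies in $Y_u$, and the estimate $\|W_u(\phi)\|_Y\leq C\|W_u(\phi)\|_{\widetilde{Y}}$ gives the continuous inclusion of coorbit spaces. Your bookkeeping remarks (quasi-triangle inequality suffices, convergence in measure is inherited through the inclusion, left invariance of $\widetilde{Y}$ is an implicit standing hypothesis just as for $Y$) match the way the paper itself treats these points, e.g.\ in its application to $W(Y)$ and $Y\cap L^\infty_{1/w}$.
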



\begin{remark}
  \label{rem:proj}
  If we replace condition \ref{r2} by the assumption that 
  the mapping $Y\ni F\mapsto F*W_u(u) \in Y$ is continuous,
  then $Y_u = Y*W_u(u)$ and the convolution operator 
  $F\mapsto F*W_u(u)$ is a continuous
  projection onto the image of $W_u$.
  This is the version of the assumptions found in \cite{Christensen2009}
  and in \cite{Christensen2009a}.
  However we have opted for the more general assumption which only
  ensures the existence of the convolution. The reason for
  this is that we aim at giving a wavelet characterization of
  Bergman spaces related to symmetric cones,
  in which case the projection might not be 
  defined (see \cite{Bekolle2001,Bekolle2004}). 
  Further it is often easier to show that the function $W_\psi(\psi)$ 
  is in the dual of $Y$ rather than $Y*W_\psi(\psi) \subseteq Y$.
\end{remark}    

\begin{remark}
  Theorem 4.2(i) in \cite{Feichtinger1989a} states that $\Co_{FG} Y$ is
  continuously included in $(\mathcal{H}_m^1)^*$, and Theorem
  4.5.13(d) in \cite{Rauhut2005} states further that $\mathcal{H}_m^1$ is
  continuously included in $\Co_{FG} Y$.  It is an open problem
  whether similar statements are true for $S$, $\Co_S^u Y$ and $S^*$.
\end{remark}
    
The following theorem tells us which analyzing vectors will give the
same coorbit space.
\begin{theorem}[Dependence on the analyzing vector]
  \label{thm:indep-of-u}
  If $u_1$ and $u_2$ both satisfy Assumption~\ref{assumption1} and for
  $i,j\in\{1,2\}$ the following properties hold
  \begin{itemize}
  \item there are non-zero constants $c_{i,j}$ such that
    $W_{u_i}(v)*W_{u_j}(u_i) = c_{i,j}W_{u_j}(v)$     for all $v\in S$
  \item $Y_{u_i}\ni f\mapsto f*W_{u_j}(u_i)\in Y$ is continuous
  \item $S^\cdual\ni \phi \mapsto \int \dup{\phi}{\pi(x)u_i}
    \dup{\pi^\cdual(x)u_i}{u_j} \,dx \in\mathbb{C}$ is weakly
    continuous
  \end{itemize}
  then $\Co_S^{u_1} Y = \Co_S^{u_2}Y$ with equivalent norms.
\end{theorem}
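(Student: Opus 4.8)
The plan is to transfer the voice transform from $u_1$ to $u_2$ by means of a \emph{cross-reproducing formula}, following essentially verbatim the argument used for part~(\ref{prop1}) of Theorem~\ref{mainthm}. Fix $i,j\in\{1,2\}$ with $i\neq j$. The first step, which is the heart of the proof, is to upgrade the first hypothesis from $v\in S$ to all distributions, i.e. to show
\begin{equation*}
  W_{u_i}(\phi)*W_{u_j}(u_i) = c_{i,j}\,W_{u_j}(\phi)\qquad\text{pointwise on }G,\ \text{for every }\phi\in S^\cdual .
\end{equation*}
Choose a net $v_\alpha$ in $S$ with $v_\alpha\to\phi$ weakly in $S^\cdual$, which is possible since $S$ is weakly dense in $S^\cdual$. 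Exactly as in the proof of (\ref{prop1}), for a fixed $y\in G$ one rewrites, using left invariance of Haar measure and the definition of $\pi^\cdual$,
\begin{equation*}
  \int_G \dup{\pi^\cdual(y^{-1})\phi}{\pi(x)u_i}\,\dup{\pi^\cdual(x)u_i}{u_j}\,dx
  = \int_G \dup{\phi}{\pi(x)u_i}\,\dup{u_i}{\pi(x^{-1}y)u_i}\,dx
  = W_{u_i}(\phi)*W_{u_j}(u_i)(y),
\end{equation*}
(with $\dup{u_i}{\pi(x^{-1}y)u_j}$ in the middle integrand) and the third hypothesis together with the weak continuity of $\pi^\cdual$ shows that $\phi\mapsto W_{u_i}(\phi)*W_{u_j}(u_i)(y)$ is weakly continuous. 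Hence $W_{u_i}(v_\alpha)*W_{u_j}(u_i)(y)\to W_{u_i}(\phi)*W_{u_j}(u_i)(y)$, while $W_{u_j}(v_\alpha)(y)=\dup{v_\alpha}{\pi(y)u_j}\to\dup{\phi}{\pi(y)u_j}=W_{u_j}(\phi)(y)$ by weak convergence; passing to the limit in the reproducing formula valid for each $v_\alpha$ gives the displayed identity.

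With this in hand, the inclusion $\Co_S^{u_1}Y\subseteq\Co_S^{u_2}Y$ with norm control is immediate. Let $\phi\in\Co_S^{u_1}Y$, so $W_{u_1}(\phi)\in Y$; by part~(\ref{prop1}) of Theorem~\ref{mainthm} applied to $u_1$ we have $W_{u_1}(\phi)=W_{u_1}(\phi)*W_{u_1}(u_1)\in Y_{u_1}$. The second hypothesis (case $i=1$, $j=2$) gives a constant $C_{1,2}$ with $\|W_{u_1}(\phi)*W_{u_2}(u_1)\|_Y\le C_{1,2}\|W_{u_1}(\phi)\|_Y$, and by the cross-reproducing formula this function equals $c_{1,2}W_{u_2}(\phi)$. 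Hence $W_{u_2}(\phi)\in Y$, i.e. $\phi\in\Co_S^{u_2}Y$, and
\begin{equation*}
  \|\phi\|_{\Co_S^{u_2}Y}=\|W_{u_2}(\phi)\|_Y\le |c_{1,2}|^{-1}C_{1,2}\,\|W_{u_1}(\phi)\|_Y=|c_{1,2}|^{-1}C_{1,2}\,\|\phi\|_{\Co_S^{u_1}Y}.
\end{equation*}
Running the same argument with the roles of $u_1$ and $u_2$ interchanged (case $i=2$, $j=1$ of all three hypotheses) yields the reverse inclusion and the reverse bound, so $\Co_S^{u_1}Y=\Co_S^{u_2}Y$ with equivalent norms.

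The one routine point being glossed over is that the map $f\mapsto f*W_{u_2}(u_1)$ supplied by the second hypothesis on $Y_{u_1}$ agrees as a function with the pointwise convolution occurring in the cross-reproducing formula; this is checked exactly as in the lemma preceding Theorem~\ref{mainthm}, using that convergence in $Y$ implies convergence in Haar measure. The only genuine obstacle is the first step: guaranteeing that $\phi\mapsto W_{u_i}(\phi)*W_{u_j}(u_i)(y)$ is weakly continuous so that the weak-density limiting argument goes through. This is precisely what the third hypothesis is designed to provide, and beyond it no idea is needed that is not already present in the proof of Theorem~\ref{mainthm}.
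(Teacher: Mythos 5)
Your argument is correct and is essentially the paper's own proof: extend the cross-reproducing formula $W_{u_1}(\phi)*W_{u_2}(u_1)=c_{1,2}W_{u_2}(\phi)$ from $S$ to $S^\cdual$ by weak density together with the third (weak continuity) hypothesis, exactly as in part (a) of Theorem~\ref{mainthm}, and then use the continuity of $f\mapsto f*W_{u_2}(u_1)$ on $Y_{u_1}$ to get $\|W_{u_2}(\phi)\|_Y\le C\,|c_{1,2}|^{-1}\|W_{u_1}(\phi)\|_Y$, finishing by symmetry. (The only blemish is the typo $\dup{u_i}{\pi(x^{-1}y)u_i}$ in your display, which you already correct parenthetically to $\dup{u_i}{\pi(x^{-1}y)u_j}$.)
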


\begin{proof}
  Assume that $u_1$ and $u_2$ are two analyzing vectors, i.e.  they
  satisfy the properties Assumption~\ref{assumption1}.  We claim first that
  \begin{equation*}
    W_{u_1}(v)*W_{u_2}(u_1) = c_{1,2}W_{u_2}(v)
  \end{equation*}
  for all $v\in S^\cdual$. With $v\in S$ this is true by the
  assumption.  The space $S$ is weakly dense in $S^\cdual$ and
  therefore the identity $W_{u_1}(v)*W_{u_2}(u_1) = c_{1,2}W_{u_2}(v)$ is
  true for all $v\in S^\cdual$. This is verified by applying the third
  continuity condition to the integral
  \begin{equation*}
    W_{u_1}(v)*W_{u_2}(u_1)(y)
    = \int \dup{\pi^\cdual(y^{-1})v}{\pi(x)u_1}\, \dup{\pi^\cdual(x)u_1}{u_2} \,dx.
  \end{equation*}
  
  If $W_{u_1}(v)\in Y$ then $W_{u_1}(v)\in Y_{u_1}$ and
  $W_{u_1}(v)*W_{u_2}(u_1)=c_{1,2}W_{u_2}(v)\in Y$
  by assumption. The continuity assumption gives the inequality
  $$\|W_{u_2}(v) \|_Y 
  = c_{1,2}^{-1} \|W_{u_1}(v)*W_{u_2}(u_1) \|_Y
  \leq C \| W_{u_1}(v)\|_Y. $$
  Symmetry   then gives us that $\Co_S^{u_1} Y = \Co_S^{u_2} Y$ with
  equivalent norms.
%
\end{proof}

In the following we will describe how the choice of the Fr\'echet
space $S$ affects the coorbit space. We will show that there is great
freedom when choosing $S$.

\begin{theorem}[Dependence on the Fr\'echet space] \label{thm:dependence-on-S}
  Let $S$ and $T$ be Fr\'echet spaces which are weakly dense in their
  conjugate duals $S^\cdual$ and $T^\cdual$ respectively.  Let $\pi$
  and $\widetilde{\pi}$ denote representations of $G$ on $S$ and $T$
  respectively. Assume there is a vector $u\in S$ and
  $\widetilde{u}\in T$ such that the requirements in 
  Assumption~\ref{assumption1}
  are satisfied by both $(u,S)$ and
  $(\widetilde{u},T)$. Also assume that the conjugate dual
  pairings of $S^\cdual\times S$ and $T^\cdual\times T$ satisfy
  $\dup{u}{\pi(x)u}_S
  =\dup{\widetilde{u}}{\widetilde{\pi}(x)\widetilde{u}}_T$ for all
  $x\in G $.  Then $\Co_S^u Y$ and $\Co_T^{\widetilde{u}} Y$ are
  isometrically isomorphic. 
  The isomorphism is given by $W_{\widetilde{u}}W_u^{-1}$.
\end{theorem}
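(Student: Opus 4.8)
The plan is to exploit that the construction of $\Co_S^uY$ depends on the analyzing vector only through the scalar function $x\mapsto\dup{u}{\pi(x)u}$, and then to read off the isomorphism by composing the two isometries provided by Theorem~\ref{mainthm}.

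First, I would unwind the definitions. Regarding $u\in S$ as an element of $S^\cdual$ via the embedding $S\hookrightarrow S^\cdual$, one has $W_u(u)(x)=\dup{u}{\pi(x)u}_S$ and, symmetrically, $W_{\widetilde u}(\widetilde u)(x)=\dup{\widetilde u}{\widetilde\pi(x)\widetilde u}_T$; the hypothesis on the two conjugate dual pairings says precisely that these continuous functions on $G$ coincide, and I will call the common function $K\in C(G)$. Consequently the two model spaces agree: $Y_u=\{F\in Y\,|\,F=F*W_u(u)\}$ and $Y_{\widetilde u}=\{F\in Y\,|\,F=F*W_{\widetilde u}(\widetilde u)\}$ are both equal to $\{F\in Y\,|\,F=F*K\}$, where the convolution is pointwise well defined on all of $Y$ by condition~\ref{r2} (which is part of Assumption~\ref{assumption1} for each of the two pairs). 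Hence $Y_u=Y_{\widetilde u}$ as subspaces of $Y$.

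Next, I would assemble the statement. Since Assumption~\ref{assumption1} is satisfied by $(u,S)$ and by $(\widetilde u,T)$, Theorem~\ref{mainthm}(\ref{prop6}) applies to each and yields isometric isomorphisms $W_u\colon\Co_S^uY\to Y_u$ and $W_{\widetilde u}\colon\Co_T^{\widetilde u}Y\to Y_{\widetilde u}$; because their targets are the same subspace $Y_u=Y_{\widetilde u}$ of $Y$, following $W_u$ by the inverse of $W_{\widetilde u}$ gives an isometric isomorphism $\Co_S^uY\to\Co_T^{\widetilde u}Y$, which is the map $W_{\widetilde u}W_u^{-1}$ named in the statement.

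I do not expect a genuine obstacle; the whole content is the identification $W_u(u)=W_{\widetilde u}(\widetilde u)$ in the first step, and the rest is bookkeeping around Theorem~\ref{mainthm}. The one point to watch is that several conditions in Assumption~\ref{assumption1} (notably \ref{r3} and \ref{r4}) involve the analyzing vector through more than its diagonal matrix coefficient, so they cannot be transported from $(u,S)$ to $(\widetilde u,T)$ or conversely; this causes no difficulty, however, since they are granted outright for both pairs, and the two coorbit realizations are glued together only along the intrinsically defined reproducing kernel subspace $Y_u=Y_{\widetilde u}\subseteq Y$.
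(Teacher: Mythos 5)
Your proposal is correct and takes essentially the same route as the paper: the hypothesis forces $W_u(u)=W_{\widetilde u}(\widetilde u)$, hence $Y_u=Y_{\widetilde u}$, and composing the isometric isomorphisms from Theorem~\ref{mainthm} part (\ref{prop6}) gives the result. One small remark: your verbal description (apply $W_u$, then the inverse of $W_{\widetilde u}$) is the correct composition $W_{\widetilde u}^{-1}W_u:\Co_S^uY\to\Co_T^{\widetilde u}Y$, exactly as in the paper's proof; the formula $W_{\widetilde u}W_u^{-1}$ quoted in the theorem statement is a misordering in the paper itself, so do not relabel your (correct) map with it.
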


\begin{proof}
  Let $W_u(\phi)(x) = \dup{\phi}{\pi(x)u}_S$ for $\phi \in \Co_u^S Y$ and
  $W_{\widetilde{u}}(\widetilde{v}')(x) =
  \dup{\widetilde{v}'}{\widetilde{\pi}(x)\widetilde{u}}_T$ for
  $\widetilde{v}' \in \Co_{\widetilde{u}}^T Y$.  Since it is assumed
  that $W_u(\pi(x)u)=W_{\widetilde{u}}(\pi(x)\widetilde{u})$ for all
  $x\in G$ the spaces $\Co_u^S Y$ and $\Co_{\widetilde{u}}^T Y$ are
  both isometrically isomorphic to the space
  $Y_u=Y_{\widetilde{u}}$.  The isomorphism
  between $\Co_S^u Y$ and $\Co_T^{\widetilde{u}} Y$ is exactly
  $W_{\widetilde{u}}^{-1} W_u: \Co_u^S Y \to \Co_{\widetilde{u}}^T Y$.
\end{proof}

Let $\pi$ be a unitary irreducible representation of $G$ on
$\mathcal{H}$.  Assume that the Fr\'echet spaces $S$ and $T$ are
$\pi$-invariant and that $(S,\mathcal{H},S^\cdual)$ and
$(T,\mathcal{H},T^\cdual)$ are Gelfand triples with the common Hilbert
space $\mathcal{H}$.  Then $S\cap T$ is $\pi$-invariant. If we can
choose a non-zero vector $u\in S\cap T$, such that $u$ is analyzing for
both $S$ and $T$, then
\begin{equation*}
  \dup{u}{\pi(x)u}_S = (u,\pi(x)u)_{\mathcal{H}} = \dup{u}{\pi(x)u}_T
\end{equation*}
and we are in the situation of the previous theorem.  We summarize the
statement as
\begin{corollary}
  Assume that $(S,\mathcal{H},S^\cdual)$ and
  $(T,\mathcal{H},T^\cdual)$ are Gelfand tripples and assume there is
  an analyzing vector $u\in S\cap T$ such that both $(u,S)$ and
  $(u,T)$ satisfy Assumption~\ref{assumption1} for some Banach space
  $Y$, then $\Co_S^u Y$ and $\Co_T^{u} Y$ are isometrically isomorphic.
\end{corollary}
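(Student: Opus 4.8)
The plan is to obtain the corollary as an immediate application of Theorem~\ref{thm:dependence-on-S}, taking $T$ as in that theorem, $\widetilde{\pi}=\pi|_T$ and $\widetilde{u}=u$. All the hypotheses of that theorem except one are supplied for free by the present setup: $S$ and $T$ are Fr\'echet spaces weakly dense in their conjugate duals (being the ``small'' spaces of Gelfand triples), $\pi$ restricts to continuous representations on each of them, and $(u,S)$, $(u,T)$ satisfy Assumption~\ref{assumption1} by hypothesis. The one remaining hypothesis to verify is the compatibility of pairings, $\dup{u}{\pi(x)u}_S=\dup{u}{\pi(x)u}_T$ for all $x\in G$, and this is where I would spend the (very little) effort.

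To check it I would use the defining property of a Gelfand triple $(S,\mathcal{H},S^\cdual)$: the embedding $\mathcal{H}\hookrightarrow S^\cdual$ is arranged so that $\dup{\phi}{v}_S=(\phi,v)_{\mathcal{H}}$ whenever $\phi\in\mathcal{H}$ and $v\in S$, and likewise for $T$. Since $u\in S\cap T\subseteq\mathcal{H}$ and the representations of $G$ on $S$ and on $T$ are both restrictions of the one unitary representation on $\mathcal{H}$, we have $\pi(x)u\in S\cap T$ for every $x$ and
\[
  \dup{u}{\pi(x)u}_S=(u,\pi(x)u)_{\mathcal{H}}=\dup{u}{\pi(x)u}_T .
\]
In particular the reproducing kernel $W_u(u)(x)=\dup{u}{\pi(x)u}$ is literally the same function on $G$ whether computed in $S$ or in $T$, so the subspace $Y_u=Y_{\widetilde{u}}$ of $Y$ is one and the same.

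With this identity in hand, Theorem~\ref{thm:dependence-on-S} applies verbatim and shows that $\Co_S^u Y$ and $\Co_T^u Y$ are isometrically isomorphic: each voice transform maps its coorbit space isometrically onto the common space $Y_u$, and the isometry is obtained by composing one voice transform with the inverse of the other (explicitly, $\phi\in\Co_S^u Y$ is sent to the unique element of $\Co_T^u Y$ having the same voice transform). The only point that requires genuine care is notational: one must keep the two pairings $\dup{\cdot}{\cdot}_S$, $\dup{\cdot}{\cdot}_T$ and the two restricted representations notationally distinct while recognizing that the scalar matrix coefficient $x\mapsto(u,\pi(x)u)_{\mathcal{H}}$ is the same object in both pictures. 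There is no real obstacle here, since all of the analytic content has already been discharged in Theorem~\ref{thm:dependence-on-S}.
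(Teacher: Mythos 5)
Your proposal is correct and is essentially the paper's own argument: the paper likewise deduces the corollary from Theorem~\ref{thm:dependence-on-S} by noting that for a Gelfand triple the dual pairing extends the inner product of $\mathcal{H}$, so $\dup{u}{\pi(x)u}_S = (u,\pi(x)u)_{\mathcal{H}} = \dup{u}{\pi(x)u}_T$. Nothing further is needed.
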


If the Fr\'echet space $S$ is continuously included and dense in
the Fr\'echet space $T$, then we can regard the space
$T^\cdual$ as a subspace of $S^\cdual$.  With this identification the
two coorbit spaces will be equal. We state the following
\begin{theorem}
  Let $(\pi,\mathcal{H})$ be a unitary representation of
  $G$, and let $(S,\mathcal{H},S^\cdual)$ and
  $(T,\mathcal{H},T^\cdual)$ be Gelfand triples for which $(\pi,S)$
  and $(\pi,T)$ are representations of $G$.  Assume that $i:S\to T$ is
  a continuous linear inclusion and that there is $u\in S$ such that
  both $(u,S)$ and $(i(u),T)$ satisfy
  Assumption~\ref{assumption1}.  Then the map $i^\cdual$ restricted to
  $\Co_T^{i(u)} Y$ is an isometric isomorphism between $\Co_T^{i(u)}
  Y$ and $\Co_S^u Y$.
\end{theorem}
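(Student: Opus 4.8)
The plan is to show that the transpose $i^\cdual:T^\cdual\to S^\cdual$, determined by $\dup{i^\cdual\psi}{v}_S=\dup{\psi}{i(v)}_T$, maps $\Co_T^{i(u)}Y$ bijectively and isometrically onto $\Co_S^u Y$. Everything hinges on the single identity
\[
  W_u^S(i^\cdual\psi)=W_{i(u)}^T(\psi)\qquad(\psi\in T^\cdual),
\]
where $W_u^S(\phi)(x)=\dup{\phi}{\pi(x)u}_S$ and $W_{i(u)}^T(\psi)(x)=\dup{\psi}{\pi(x)i(u)}_T$. This follows at once from the definition of $i^\cdual$ together with the fact that $i$ intertwines the two $G$-actions, $i(\pi(x)u)=\pi(x)i(u)$; the latter is built into the hypotheses, since in the Gelfand triples $S\subseteq\mathcal H\subseteq S^\cdual$ and $T\subseteq\mathcal H\subseteq T^\cdual$ the representation $\pi$ on $S$ and on $T$ is the restriction of the unitary representation on $\mathcal H$, and $i$ is the inclusion of $S$ into $T$ as subspaces of $\mathcal H$.

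First I would record two consequences of the Gelfand triple structure. Because $\dup{\cdot}{\cdot}_S$ and $\dup{\cdot}{\cdot}_T$ both restrict to $(\cdot,\cdot)_{\mathcal H}$, for every $x\in G$
\[
  W_u^S(u)(x)=\dup{u}{\pi(x)u}_S=(u,\pi(x)u)_{\mathcal H}=\dup{i(u)}{\pi(x)i(u)}_T=W_{i(u)}^T(i(u))(x);
\]
so the two reproducing kernels coincide as functions on $G$, whence $Y_u=Y_{i(u)}$. Secondly, $W_u^S$ is injective on all of $S^\cdual$: if $W_u^S(\phi)=0$ then $\dup{\phi}{\pi(x)u}_S=0$ for all $x$, and since $u$ is cyclic in $S$ while $S$ is weakly dense in $S^\cdual$, this forces $\phi=0$; likewise $W_{i(u)}^T$ is injective on $T^\cdual$.

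The argument then runs in three short steps. (1) For $\psi\in\Co_T^{i(u)}Y$ the displayed identity gives $W_u^S(i^\cdual\psi)=W_{i(u)}^T(\psi)\in Y$, so $i^\cdual\psi\in\Co_S^u Y$ and $\|i^\cdual\psi\|_{\Co_S^u Y}=\|W_u^S(i^\cdual\psi)\|_Y=\|W_{i(u)}^T(\psi)\|_Y=\|\psi\|_{\Co_T^{i(u)}Y}$; in particular $i^\cdual$ is isometric on $\Co_T^{i(u)}Y$, and hence injective there. (2) For surjectivity, take $\phi\in\Co_S^u Y$ and set $F:=W_u^S(\phi)$; by Theorem~\ref{mainthm}(\ref{prop1}), $F=F*W_u^S(u)$, so $F\in Y_u=Y_{i(u)}$. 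Applying Theorem~\ref{mainthm}(\ref{prop6}) to the pair $(i(u),T)$ produces $\psi\in\Co_T^{i(u)}Y$ with $W_{i(u)}^T(\psi)=F$. (3) Then $W_u^S(i^\cdual\psi)=W_{i(u)}^T(\psi)=F=W_u^S(\phi)$, and injectivity of $W_u^S$ on $S^\cdual$ gives $i^\cdual\psi=\phi$. Hence $i^\cdual(\Co_T^{i(u)}Y)=\Co_S^u Y$ and the restriction of $i^\cdual$ is the desired isometric isomorphism.

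I expect the only delicate point to be the very first step: checking that the hypothesis that $i$ is a continuous linear inclusion of Gelfand triples over the common $\mathcal H$ genuinely delivers the intertwining property of $i$ and the compatibility of the two dual pairings, hence the identity $W_u^S\circ i^\cdual=W_{i(u)}^T$ and the equality $Y_u=Y_{i(u)}$. Once these are pinned down, the rest is a formal consequence of Theorem~\ref{mainthm} and the cyclicity of $u$.
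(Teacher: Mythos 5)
Your proposal is correct and follows essentially the same route as the paper: the identity $W_u(i^\cdual\psi)=W_{i(u)}(\psi)$ coming from the intertwining of $i$, the coincidence of the kernels $W_u(u)=W_{i(u)}(i(u))$ via the common Hilbert space (giving $Y_u=Y_{i(u)}$ and the isometry), and surjectivity obtained from the reproducing formula plus \ref{r3} for the pair $(i(u),T)$ — which you package as an appeal to Theorem~\ref{mainthm}(\ref{prop6}) while the paper reruns that construction, finishing with cyclicity of $u$ (equivalently, injectivity of $W_u$ on $S^\cdual$) to identify $i^\cdual\psi$ with $\phi$.
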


\begin{proof}
  Since the vector $i(u)$ is assumed cyclic in $T$, we see that 
  $i(S)$ is dense in $T$, and therefore $i^*:T^*\to S^*$ is
  injective. This allows us to view $T^*$ as a subspace of
  $S^*$.
  
  Let $W_u(\phi)(x) = \dup{\phi}{\pi(x)u}_S$ and $W_{i(u)}(\widetilde{v}')
  = \dup{\widetilde{v}'}{\pi(x)i(u)}_T$.  For $\phi \in \Co_S^u Y$ we have
  \begin{align}
    W_u(\phi)*W_{i(u)}(i(u)) (x) \label{eq:5}
    &= \int W_u(\phi)(y) (i(u),\pi(y^{-1}x)i(u))_\mathcal{H} \,dy \\
    &= \int W_u(\phi)(y) (u,\pi(y^{-1}x)u)_\mathcal{H} \,dy \notag\\
    &= W_u(\phi)*W_u(u) \notag\\
    &= W_u(\phi) \notag
  \end{align}
  which shows that $W_u(\phi) \in Y_{i(u)}$.
  By \ref{r3} there there is an element $\widetilde{v}'\in
  T^\cdual$ such that for $\widetilde{v}\in T$
  \begin{equation*}
    \dup{\widetilde{v}'}{\widetilde{v}}_T
    = \int W_u(\phi)(x) \dup{\pi^\cdual(x)i(u)}{\widetilde{v}}_T \,dx.
  \end{equation*}
  Furthermore $i^\cdual(\widetilde{v}') = \phi$ in $S^\cdual$, since
  $u$ is cyclic and
  \begin{equation*}
    \dup{i^\cdual(\widetilde{v}')}{\pi(x)u}_S
    = \dup{\widetilde{v}'}{\pi(x)i(u)}_T
    = W_u(\phi)*W_{i(u)}(i(u))(x)
    = \dup{\phi}{\pi(x)u}_S
  \end{equation*}
  for each $x\in G$. This shows that $\Co_S^u Y \subseteq
  i^\cdual(\Co_T^{i(u)} Y)$.

  If on the other hand $\widetilde{v}'\in \Co_T^{i(u)} Y$, then
  \begin{equation*}
    W_u(i^\cdual(\widetilde{v}')) (x)
    =\dup{i^\cdual(\widetilde{v}')}{\pi(x)u}_S
    =\dup{\widetilde{v}'}{\pi(x)i(u)}_T
    =W_{i(u)}(\widetilde{v}')(x) \in Y
  \end{equation*}
  which shows that $i^\cdual(\widetilde{v}')\in \Co_S^u Y$.  This
  implies that $i^\cdual(\Co_T^{i(u)} Y) \subseteq \Co_S^u Y $.
  
  That the mapping $i^\cdual$ is an isometry when restricted to
  $\Co_T^{i(u)} Y$ follows directly from the calculations in
  (\ref{eq:5}).
\end{proof}

\begin{remark}
  If $(\pi,S)$ is a representation of $G$ and $u$ is a cyclic vector
  for which it is true that
  $\dup{\pi^*(x)u}{u} = \overline{\dup{u}{\pi(x)u}}$ for all
  $x\in G$ and \ref{r1} and \ref{r4} are satisfied, then
  $\dup{v}{w}$ is an inner product on $S$.  The completion
  $\mathcal{H}$ of $S$ with respect to the norm $\| v\|_\mathcal{H} =
  \sqrt{\dup{v}{v}}$ is a Hilbert space. The representation $\pi$ will then
  extend to a unitary representation $\widetilde{\pi}$ on
  $\mathcal{H}$, but we will not be able to conclude that
  $\widetilde{\pi}$ is irreducible.
\end{remark}

\begin{remark}
  In general the reproducing formula \ref{r1} does not imply 
  unitarity as shown in \cite{Zimmermann2005}. There Zimmermann
  obtains a reproducing formula from a non-unitary representation.
  It will be interesting to see if it is possible to 
  construct coorbit spaces in this setting.
\end{remark}

The following theorem is a slight generalizaton of \cite[Theorem
4.9]{Feichtinger1989a}, which in theory enables us to apply it to more general
coorbit spaces, than the ones treated in \cite{Feichtinger1989a}. 
The proof follows
that of \cite[Theorem 4.9]{Feichtinger1989a}, but we include it here for
completeness.

\begin{theorem}
  Let $Y^\cdual$ be the conjugate dual space of $Y$ and assume it is
  also a Banach space of functions.  Assume that $u\in S$ is a vector
  satisfying Assumption~\ref{assumption1} for both $Y$ and $Y^\cdual$. If the
  conjugate dual pairing on $Y^\cdual\times Y$ satisfies
  \begin{equation}\label{eq:6}
    \dup{f*W_u(u)}{g}_{Y^\cdual\times Y} = \dup{f}{g*W_u(u)}_{Y^\cdual\times Y}
  \end{equation}
  then $(\Co_S^u {Y})^\cdual = \Co_S^u (Y^\cdual)$.  If $Y$ is
  reflexive so is $\Co_S^u Y$.
\end{theorem}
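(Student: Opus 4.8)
The plan is to transport the problem to the concrete ``reproducing'' subspace of $Y$ by means of the voice transform and then run the standard Banach-space duality for complemented subspaces. By Theorem~\ref{mainthm}(\ref{prop6}) together with the Lemma preceding it, $W_u$ is an isometric isomorphism of $\Co_S^u Y$ onto the closed subspace $Y_u\subseteq Y$, and, applied to $Y^\cdual$, an isometric isomorphism of $\Co_S^u(Y^\cdual)$ onto the closed subspace $(Y^\cdual)_u\subseteq Y^\cdual$. Hence it suffices to produce an identification of $(Y_u)^{\cdual}$ with $(Y^\cdual)_u$ compatible with $W_u$, and to record the pairing realizing it.

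The decisive step is to promote the convolution $F\mapsto F*W_u(u)$ --- which by \ref{r2} is a priori only defined pointwise --- to a bounded projection $P$ of $Y$ onto $Y_u$, and likewise $\widetilde P\colon f\mapsto f*W_u(u)$ to a bounded projection of $Y^\cdual$ onto $(Y^\cdual)_u$. That $P$ maps $Y$ into $Y$ and $\widetilde P$ maps $Y^\cdual$ into $Y^\cdual$ is forced by the requirement that both sides of (\ref{eq:6}) be legitimate values of the duality pairing; this is precisely the stronger ``projection'' version of the hypotheses discussed in Remark~\ref{rem:proj}. Boundedness then follows from the closed graph theorem: if $F_n\to F$ in $Y$ and $F_n*W_u(u)\to H$ in $Y$, then convergence in $Y$ implies convergence in measure, so along a subsequence both convergences hold pointwise almost everywhere; but for each fixed $y$ the functional $G\mapsto G*W_u(u)(y)=\int G(x)W_u(u)(x^{-1}y)\,dx$ is continuous on $Y$ by \ref{r2} and left invariance, so $F_n*W_u(u)(y)\to F*W_u(u)(y)$ for every $y$, whence $H=F*W_u(u)$. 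Idempotency of $P$ and $\widetilde P$ is immediate from $W_u(u)*W_u(u)=W_u(u)$, and a short computation identifies $PY=Y_u$ and $\widetilde P Y^\cdual=(Y^\cdual)_u$.

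With $P$ and $\widetilde P$ in hand I would establish the key identity $(Y^\cdual)_u=(\ker P)^{\perp}$ inside $Y^\cdual$. If $f=f*W_u(u)$ and $Pg=0$, then $\dup{f}{g}=\dup{f*W_u(u)}{g}=\dup{f}{g*W_u(u)}=\dup{f}{Pg}=0$ by (\ref{eq:6}); conversely, for $f\in(\ker P)^{\perp}$ one applies $f$ to $g-Pg\in\ker P$ to get $\dup{f}{g}=\dup{f}{g*W_u(u)}=\dup{f*W_u(u)}{g}$ for all $g\in Y$, so $f=f*W_u(u)$ because an element of $Y^\cdual$ is determined by its action on $Y$. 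Since $Y=Y_u\oplus\ker P$, the restriction map $(\ker P)^{\perp}\to (Y_u)^{\cdual}$ is an isomorphism, isometric when $P$ has norm one. Composing the two $W_u$-isometries with this map yields $(\Co_S^u Y)^{\cdual}=\Co_S^u(Y^\cdual)$, and unwinding the chain shows the duality is implemented by $\dup{\psi}{\phi}=\dup{W_u(\psi)}{W_u(\phi)}_{Y^\cdual\times Y}$ for $\psi\in\Co_S^u(Y^\cdual)$, $\phi\in\Co_S^u Y$, so that $\|\psi\|_{(\Co_S^u Y)^\cdual}=\|W_u(\psi)\|_{Y^\cdual}$. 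For the last assertion, if $Y$ is reflexive then $Y^\cdual$ is again a reflexive Banach function space, $u$ satisfies Assumption~\ref{assumption1} for it, and (\ref{eq:6}) for the pair $(Y^\cdual,Y^{\cdual\cdual})=(Y^\cdual,Y)$ is just the conjugate of the given identity; applying the part already proved with the roles of $Y$ and $Y^\cdual$ exchanged gives $(\Co_S^u(Y^\cdual))^{\cdual}=\Co_S^u(Y^{\cdual\cdual})=\Co_S^u Y$, and since everything is transported from $Y_u$ --- a complemented, hence reflexive, subspace of the reflexive space $Y$ --- the resulting map $\Co_S^u Y\to(\Co_S^u Y)^{\cdual\cdual}$ is the canonical embedding.

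I expect the genuine obstacle to be the second paragraph: under the weak form \ref{r1}--\ref{r4} of the assumptions nothing guarantees $F*W_u(u)\in Y$, so one must either read (\ref{eq:6}) as implicitly carrying that information (the projection hypothesis of Remark~\ref{rem:proj}) or derive $F*W_u(u)\in Y$ and $f*W_u(u)\in Y^\cdual$ from (\ref{eq:6}) together with \ref{r2} and \ref{r3}. Once $P$ and $\widetilde P$ are known to be bounded projections linked by (\ref{eq:6}), the rest is routine functional analysis; the only remaining bookkeeping is to check, in the reflexivity argument, that the composition of the two identifications really is the canonical map into the bidual, which follows from the corresponding standard fact for the subspace $Y_u$ of $Y$.
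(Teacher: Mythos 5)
Your argument is correct and reaches the same duality pairing the paper uses, but it is organized differently. The paper works directly at the level of the coorbit spaces: it defines $T\colon \Co_S^u(Y^\cdual)\to(\Co_S^u Y)^\cdual$ by $\dup{Tw'}{\phi}=\dup{W_u(w')}{W_u(\phi)}_{Y^\cdual\times Y}$ (exactly the pairing you end up with) and proves injectivity and surjectivity by hand, using (\ref{eq:6}) twice: once to get $\dup{W_u(w')}{f}=\dup{Tw'}{W_u^{-1}(f*W_u(u))}$, and once to show that the functional $\widetilde f$ defined by $\dup{\widetilde f}{g}=\dup{\widetilde w'}{W_u^{-1}(g*W_u(u))}$ satisfies $\widetilde f*W_u(u)=\widetilde f$, hence lies in $W_u(\Co_S^u(Y^\cdual))$. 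You instead transport everything into $Y$ via Theorem~\ref{mainthm}(\ref{prop6}), upgrade $F\mapsto F*W_u(u)$ to a bounded projection $P$ with range $Y_u$, identify $(Y^\cdual)_u$ with the annihilator $(\ker P)^\perp$ using (\ref{eq:6}), and quote complemented-subspace duality. Both proofs rest on the same tacit strengthening of the hypotheses: the paper silently pairs $W_u(w')$ against $f*W_u(u)$ for arbitrary $f\in Y$ and needs $g\mapsto W_u^{-1}(g*W_u(u))$ to be a bounded map $Y\to\Co_S^u Y$ for $\widetilde f$ to lie in $Y^\cdual$, which is precisely your projection hypothesis (cf.\ Remark~\ref{rem:proj}); you make this explicit and even supply the closed-graph argument for boundedness, which is a genuine improvement in rigor (though note that idempotency of $P$, i.e.\ $g*W_u(u)*W_u(u)=g*W_u(u)$, still needs an associativity/Fubini justification that both you and the paper pass over). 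What each route buys: the paper's construction stays self-contained and avoids naming the projection, which fits its stated aim of weak hypotheses; your route makes the functional-analytic mechanism (adjoint projection, annihilator of the kernel) transparent, yields the explicit form of the duality and the norm statement, and gives reflexivity essentially for free, since $\Co_S^u Y\cong Y_u$ is a closed (complemented) subspace of the reflexive space $Y$ --- a point the paper asserts but does not argue.
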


If the conjugate dual pairing of $Y$ and $Y^*$ is the extension of an
integral then property (\ref{eq:6}) is true.

\begin{proof}
  Define a linear map $T:\Co_S^u (Y^\cdual) \to (\Co_S^u Y)^\cdual$ by
  \begin{equation*}
    \dup{Tw'}{\phi}_{Y^\cdual\times Y}
    = \dup{W_u(w')}{W_u(\phi)}_{(\Co_S^uY)^\cdual\times \Co_S^u Y}
  \end{equation*}
  for $w'\in \Co_S^u (Y^\cdual)$ and $\phi\in \Co_S^u Y$.  The map $T$
  is a well defined, since $W_u$ is a topological isomorphism onto its
  image.
  
  If $T(w')=0$ for some $w'\in \Co_S^u (Y^\cdual)$ then for any $f\in
  Y$ we have
  \begin{align*}
    \dup{W_u(w')}{f}_{Y^\cdual\times Y}
    &= \dup{W_u(w')*W_u(u)}{f}_{Y^\cdual\times Y}\\
    &= \dup{W_u(w')}{f*W_u(u)}_{Y^\cdual\times Y}    \\
    &= \dup{Tw'}{W_u^{-1}(f*W_u(u))}_{(\Co_S^uY)^\cdual\times \Co_S^u Y}\\
    &= 0
  \end{align*}
  since $f*W_u(u)\in W_u(\Co_S^u Y)$.  So $W_u(w')=0$ in $Y^\cdual$,
  and by the injectivity of $W_u:\Co_S^u (Y^\cdual)\to
  Y^\cdual*W_u(u)$ we conclude that $w'=0$.  This shows that $T$ is
  injective.

  Let $\widetilde{w}'\in (\Co_S^u Y)^\cdual$ and define $\widetilde{f}
  \in Y^\cdual$ by
  \begin{equation*}
    \dup{\widetilde{f}}{g}_{Y^\cdual\times Y} 
    = \dup{\widetilde{w}'}{W_u^{-1}(g*W_u(u))}_{(\Co_S^u Y)^\cdual\times \Co_S^u (Y)}
  \end{equation*}
  for all $g\in Y$.  Notice that
  \begin{equation*}
    \widetilde{f}*W_u(u) = \widetilde{f}
  \end{equation*}
  which can be seen by the calculation
  \begin{align*}
    \dup{\widetilde{f}*W_u(u)}{g}_{Y^\cdual\times Y}
    &= \dup{\widetilde{f}}{g*W_u(u)}_{Y^\cdual\times Y} \\
    &= \dup{\widetilde{w}'}{W_u^{-1}(g*W_u(u)*W_u(u))}
    _{(\Co_S^u Y)^\cdual\times \Co_S^u Y} \\
    &= \dup{\widetilde{w}'}{W_u^{-1}(g*W_u(u))}
    _{(\Co_S^u Y)^\cdual\times \Co_S^u Y} \\
    &= \dup{\widetilde{f}}{g}_{Y^\cdual\times Y}
  \end{align*}
  Thus there is a $w'\in \Co_S^u(Y^\cdual)$ such that
  \begin{equation*}
    \widetilde{f} = W_u(w')
  \end{equation*}
  Finally for all $\phi \in \Co_S^u Y$ the calculation
  \begin{align*}
    \dup{Tw'}{\phi}_{(\Co_S^u Y)^\cdual\times \Co_S^u Y}
    &= \dup{W_u(w')}{W_u(\phi)}_{Y^\cdual\times Y} \\
    &= \dup{\widetilde{f}}{W_u(\phi)}_{Y^\cdual\times Y}\\
    &= \dup{\widetilde{w}'}{W_u^{-1}(W_u(\phi)*W_u(u))}_{Y^\cdual\times Y} \\
    &= \dup{\widetilde{w}'}{W_u^{-1}(W_u(\phi))}_{Y^\cdual\times Y} \\
    &= \dup{\widetilde{w}'}{\phi}_{(\Co_S^u Y)^\cdual\times \Co_S^u Y}
  \end{align*}
  shows that $T(w') = \widetilde{w}'$ and proves that $T$ is
  surjective.
\end{proof}

We now prove that the conditions can be simplified when
dealing with unitary representations. This is worth mentioning as
some of the examples we treat later can be described in 
this manner.

\begin{theorem}
  \label{thm:coorbitsunitaryrepns}
  Let $(\pi,H)$ be a unitary representation and 
  $(S,H,S^*)$ a Gelfand triple. Let $u$ be a cyclic
  vector in $S$ such that
  $W_u(v)*W_u(u)=W_u(v)$ for all $v\in S^*$.
  Assume that for the Banach space $Y$, the mapping
  \begin{equation*}
    Y\times S\ni (F,v) \mapsto \int_G |F(x)||W_u(v)(x)|\, dx\in\mathbb{C}
  \end{equation*}
  is continuous, then $\Co_S^u Y = \{ \phi\in S^*\, |\, W_u(\phi)\in Y  \}$
  satisfies properties (\ref{prop1}-\ref{prop6}) of Theorem~\ref{mainthm}.
\end{theorem}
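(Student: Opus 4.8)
The plan is to verify that the pair $(u,Y)$ satisfies Assumption~\ref{assumption1} and then to invoke Theorem~\ref{mainthm} directly. The single identity that makes everything collapse is the following consequence of unitarity of $\pi$ together with the compatibility of the Gelfand-triple pairing with the inner product of $H$: for $v\in S$ and $x\in G$,
\[
  \dup{\pi^\cdual(x)u}{v}=\dup{u}{\pi(x^{-1})v}=(\pi(x)u,v)_H=\overline{W_u(v)(x)},
\]
and in particular $\dup{\pi^\cdual(x)u}{u}=W_u(u)(x^{-1})$ and $|W_u(u)(x^{-1})|=|W_u(u)(x)|$.

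Condition~\ref{r1} is free: it is assumed for all $v\in S^\cdual$, which is stronger than what \ref{r1} requires. For~\ref{r2} I would specialize the joint-continuity hypothesis to $v=u$, obtaining a constant $C$ with $\int_G|F(x)||W_u(u)(x)|\,dx\le C\|F\|_Y$, whence $|\int_G F(x)W_u(u)(x^{-1})\,dx|\le C\|F\|_Y$ and the functional in~\ref{r2} is bounded. For~\ref{r4}, the identity above rewrites the integral as a convolution evaluated at the identity,
\[
  \int_G\dup{\phi}{\pi(x)u}\dup{\pi^\cdual(x)u}{u}\,dx=\int_G W_u(\phi)(x)W_u(u)(x^{-1})\,dx=\bigl(W_u(\phi)*W_u(u)\bigr)(e),
\]
and the assumed reproducing formula (valid on all of $S^\cdual$) collapses this to $W_u(\phi)(e)=\dup{\phi}{u}$; the map $\phi\mapsto\dup{\phi}{u}$ is weakly continuous on $S^\cdual$ by definition of the weak topology.

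The remaining condition~\ref{r3} is where the real work sits. Given $F=F*W_u(u)\in Y$, the identity above gives $\int_G F(x)\dup{\pi^\cdual(x)u}{v}\,dx=\int_G F(x)\overline{W_u(v)(x)}\,dx$, which is conjugate linear in $v\in S$. To see that it defines an element of $S^\cdual$ I need a bound of the form $C\|F\|_Y\,p(v)$ with $p$ a continuous seminorm on $S$. This is extracted from the hypothesis by noting that the dominating map $(F,v)\mapsto\int_G|F(x)||W_u(v)(x)|\,dx$ is continuous on the product of the Banach space $Y$ with the Fréchet space $S$ and is positively homogeneous of degree one in each variable; continuity at the origin yields $r,s>0$ and a continuous seminorm $p$ so that the map is bounded by $1$ on $\{\|F\|_Y<r\}\times\{p(v)<s\}$, and homogeneity upgrades this to $\int_G|F(x)||W_u(v)(x)|\,dx\le \frac{4}{rs}\|F\|_Y\,p(v)$ for all $F,v$. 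With~\ref{r1}--\ref{r4} in hand and $u$ cyclic (hence nonzero unless $S=\{0\}$), Theorem~\ref{mainthm} yields properties~(\ref{prop1})--(\ref{prop6}). I expect the only genuine obstacle to be this seminorm extraction in~\ref{r3}; the rest is bookkeeping with the conventions linking $\dup{\cdot}{\cdot}$, $(\cdot,\cdot)_H$, and unitarity of $\pi$.
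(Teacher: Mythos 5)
Your proposal is correct and takes essentially the same route as the paper: reduce to Theorem~\ref{mainthm} by observing that the assumed reproducing formula on all of $S^\cdual$ does the work of \ref{r1} and \ref{r4}, that unitarity gives $|W_u(u)(x^{-1})|=|W_u(u)(x)|$ so the hypothesis with $v=u$ yields \ref{r2}, and that the assumed continuity of $(F,v)\mapsto\int_G|F(x)|\,|W_u(v)(x)|\,dx$ yields \ref{r3}. The only cosmetic differences are that you verify \ref{r4} outright (collapsing the integral to $\phi\mapsto\dup{\phi}{u}$ via the reproducing formula) and extract an explicit bound $C\|F\|_Y\,p(v)$ by homogeneity, whereas the paper simply notes that \ref{r1} and \ref{r4} enter the proof of Theorem~\ref{mainthm} only through the reproducing formula, which is assumed here.
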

Note that for $Y=L^p(G)$ the requirement
is in fact a duality requirement, i.e. 
we require that $S\ni v\mapsto W_u(v)\in L^q(G)$ is continuous
for $1/p + 1/q =1$.

\begin{proof}
  The proof follows that of Theorem~\ref{mainthm}. We note that
  the requirements \ref{r1}  and \ref{r4} are used to prove the
  reproducing formula 
  $W_u(\phi)*W_u(u)=W_u(\phi)$ for all $\phi\in S^*$
  (which we instead have assumed here).

  The continuity in \ref{r2} is easily verified, since the unitarity of
  $\pi$ implies that $|W_u(u)(x^{-1})| = |W_u(u)(x)|$. Lastly
  the requirement \ref{r3} is satisfied for $F=F*W_u(u)\in Y$,
  since the continuity of
  \begin{equation*}
    S\ni v \mapsto \int_G |F(x)||W_u(v)(x)|\, dx\in\mathbb{C}
  \end{equation*}
  is assumed for all $F\in Y$.
\end{proof}

\section{Existing coorbit theories}

In this section we show that the coorbit theory of
Feichtinger and Gr\"ochenig is a special case of the coorbit
theory for dual pairs. 

\subsection{Coorbit theory by Feichtinger and Gr\"ochenig}
In the following let $(\pi,\mathcal{H})$ be a irreducible 
unitary square-integrable 
representation on a locally compact group $G$.
Then the Duflo-Moore Theorem \cite{Duflo1976} ensures that
we can choose $u\neq 0$, such that wavelet coefficients 
\begin{equation*}
  W_u(v) = \ip{v}{\pi(x)u}
\end{equation*}
satisfy a reproducing formula
\begin{equation*}
  W_u(v)*W_u(u) = W_u(v)
\end{equation*}
for all $v\in \mathcal{H}$.
Let $Y$ be a left invariant Banach function space continuously
included in $L^1_{loc}(G)$. Since convergence in $L^1_{loc}(G)$
implies convergence locally in Haar measure, the same is true
for $Y$. Define the weight 
\begin{equation*}
  w(x) = \sup_{\| F\|_Y=1} \frac{\| \ell_{x^{-1}}F \|_Y}{\| F\|_Y},
\end{equation*}
and assume that the space
\begin{equation*}
  \mathcal{H}_w^1 = \{ v\in\mathcal{H}\, |\, W_u(v)\in L^1_w   \}
\end{equation*}
contains $u$ (and thus is non-zero) and equip it with the norm
\begin{equation*}
  \| v\|_{\mathcal{H}_w^1 } = \| W_u(v)\|_{L^1_w}.
\end{equation*}
Denote the conjugate dual of 
$\mathcal{H}_w^1$ by $(\mathcal{H}_w^1)^*$, and define the
coorbit space
\begin{equation*}
  \Co_{FG}Y = \{ \phi\in (\mathcal{H}_w^1)^*\, |\, W_u(\phi)\in Y \}.
\end{equation*}
Feichtinger and Gr\"ochenig prove, among other results,
that $\Co_{FG}Y$ is a $\pi$-invariant Banach space.

Let us verify that the construction of Feichtinger and Gr\"ochenig
satisfies the assumptions of Theorem~\ref{thm:coorbitsunitaryrepns}.
The $\pi$-invariance of $\mathcal{H}_w^1$ ensures that 
$\mathcal{H}_w^1$ is dense in $\mathcal{H}$. Further $\mathcal{H}$
is weakly dense in $(\mathcal{H}_w^1)^\cdual$ 
(using the weak$*$ topology $(\mathcal{H}_w^1)^\cdual$; 
see \cite[Lemma 4.5.8(b)]{Rauhut2005}).
Thus $(\mathcal{H}_w^1,\mathcal{H},(\mathcal{H}_w^1)^\cdual)$ is a Gelfand-triple
when $(\mathcal{H}_w^1)^\cdual$ is equipped with its weak$*$ topology.
It is shown in \cite{Feichtinger1988} Corollary 4.9
that if $0\neq u\in\mathcal{H}_w^1$, then 
$u$ is cyclic in $\mathcal{H}_w^1$. 
Furhter, since $\mathcal{H}_w^1$ is contained in $\mathcal{H}$,
the reproducing formula holds for $\mathcal{H}_w^1$ and
thus \ref{r1} is satisfied. 
The mapping
\begin{equation*}
  (\mathcal{H}_w^1)^\cdual \ni 
  \phi\mapsto 
  \Big|\int \dup{\phi}{\pi(x)u}\dup{\pi(x)u}{u}\, dx \Big|
  \leq \| W_u(\phi)\|_{L^\infty_{1/w}} \| W_u(u)\|_{L^1_w}
  \in \mathbb{R}
\end{equation*}
is (strongly) continuous and thus also continuous if 
$(\mathcal{H}_w^1)^\cdual$ is equipped with its weak topology.
We have verified \ref{r4}.

The assumptions on the weight $w$ and $W_u(u)\in L^1_w$
ensure that $Y*L^1_w \subseteq Y$ and $F\mapsto |F|*|W_u(u)|$
is continuous. 
Thus the function $E=F*W_u(u)\in Y$ is reproduced
by $E=E*W_u(u)$ and it is shown in \cite{Feichtinger1989a} 
Proposition 4.3(iii) that 
\begin{equation*}
  \Big| \int F(x)W_u(u)(x^{-1})\, dx \Big|
  = |E(e)| 
  = |\dup{W_u(u)}{E}| 
  \leq C\| E\|_Y
  = C\| F*W_u(u)\|_Y
  \leq \| F\|_Y,
\end{equation*}
which proves that \ref{r2} holds.
The same proposition
tells us that $E\in L^\infty_{1/w}$ and therefore
$\pi(E)u\in (\mathcal{H}_w^1)^*$ thus proving \ref{r3}.
This shows that $\Co_{FG}Y$ is indeed a special case of
the general construction.

The proofs of the theorems by Feichtinger and Gr\"ochenig rely
on Wiener amalgam spaces, which we
briefly introduce here. For a compact neighbourhood $Q$ of
$e\in G$ let $1_Q$ be the indicator function on $Q$ and define
the control function
\begin{equation*}
  K_Q(F)(x) = \| (\ell_x 1_Q)F \|_{L^\infty}.
\end{equation*}
Then the space $W(Y)$ defined by
\begin{equation*}
  W(Y) = \{F\in Y\, |\, K_Q(F)\in Y    \}
\end{equation*}
with norm $\|F\|_{W(Y)} = \| K_Q(F)\|_Y$ 
does not depend on $Q$ (up to norm equivalence).
These spaces were 
used to verify properties \ref{r2} and \ref{r3}. 
These requirements are often easier to prove by duality 
(see Theorem~\ref{thm:coorbitsunitaryrepns})
allowing us to avoid the Wiener amalgam machinery.

\subsection{Coorbit theory for quasi-Banach spaces}
In \cite{Rauhut2007b} Rauhut introduces 
coorbits for a quasi-Banach space $Y$.
The notation in this section follows that of the
coorbit theory by Feichtinger and Gr\"ochenig. 
In order to define coorbits, Rauhut uses
Wiener amalgam spaces.
Rauhut defines the coorbit for $Y$ to be the space
\begin{equation*}
  C(Y) 
  = \{ f\in (\mathcal{H}_w^1)^*\, |\, W_u(f) \in W(Y) \}
  = \Co W(Y).
\end{equation*}
The use of $W(Y)$ ensures that the convolution by
$W_u(u)\in L^1_w$ is defined, while convolutions on
quasi-Banach spaces are generally not defined.
In \cite{Rauhut2007} is is shown that
$W(Y)$ is continuously included in $L^\infty_{1/w}(G)$ 
(which is a Banach space for which the properties in
Assumption~\ref{assumption1} can be verified with $S=\mathcal{H}_w^1$).
By Theorem~\ref{thm:subspace} it then 
follows immediately, that $C(Y)$ is a quasi-Banach space.

If $w$ is a weight for which $W_u(u)\in L^1_w$ 
it is in fact possible to define coorbit spaces for
any quasi-Banach space $Y'$ continuously included in $L^\infty_{1/w}$.
In particular the space $Y'=Y\cap L^\infty_{1/w}$ can be used.
In the case of the modulation spaces described in \cite{Galperin2004}
it turns out that $W(Y)$, $Y\cap L^\infty_{1/w}$ (and even $Y$)
give the same coorbits. If this is the case in general we do not know.

\section{Bergman spaces on the unit disc}
Let $\mathbb{D}$ be the unit disc in $\mathbb{C}$ equipped with
area measure $dz$. For $1\leq p<\infty$ and $\sigma>1$ 
the Bergman spaces are the classes of holomorphic functions 
\begin{equation*}
  A^p_\sigma({\mathbb{D}}) = 
  \Big\{ f \in\mathcal{O}({\mathbb{D}})\Big| \|f\|_{A^p_\sigma({\mathbb{D}})}^p =
  \int_{\mathbb{D}} |f(z)|^p(1-|z|^2)^{\sigma-2}\,dz < \infty   \Big\}.
\end{equation*}
In this section we give a wavelet characterization of these spaces.

\subsection{Coorbits for Discrete Series}
\label{sec:coorb-discr-seri}
Let $G\subseteq \SL$ be the connected subgroup of upper triangular matrices,
i.e.
\begin{equation*}
  G = \left\{
    \begin{pmatrix}
      a & b \\ 0 & a^{-1}
    \end{pmatrix} \,\Big|\, a>0,b\in\mathbb{R}
  \right\}
\end{equation*}
with left-invariant measure $\frac{da\,db}{a^2}$.
Through the Cayley transform this group can be regarded
as the subgroup of $\mathrm{SU}(1,1)$ consisting of matrices
\begin{equation*}
  \begin{pmatrix}
    \alpha & \beta \\ \bar\beta & \bar\alpha
  \end{pmatrix}
  =
  \frac{1}{2}
  \begin{pmatrix}
    a + a^{-1} +ib & b + i(a-a^{-1}) \\
    b - i(a-a^{-1}) & a + a^{-1} - ib 
  \end{pmatrix}.
\end{equation*}

For real numbers $s > 1$ the pairing
\begin{equation*}
  \ip{u}{v}_s 
  = \frac{s-1}{\pi} \int_\mathbb{D} u(z)\overline{v(z)}(1-|z|^2)^{s-2}  \, dz
  = \frac{s-1}{\pi} 
  \int_\mathbb{D} u(re^{i\theta})\overline{v(re^{i\theta})}
  (1-r^2)^{s-2} r\,dr \, d\theta
\end{equation*}
is an inner product on the Hilbert space
\begin{equation*}
  \mathcal{H}_s 
  = A^{2}_s(\mathbb{D})
  = \{ v\in \mathcal{O}(\mathbb{D}) | \ip{v}{v}_s<\infty \}.
\end{equation*}
The discrete series representations 
$(\pi_s,\mathcal{H}_s)$ 
are defined by
\begin{equation*}
  \pi_s
  \begin{pmatrix}
    \alpha & \beta \\ \bar\beta & \bar\alpha
  \end{pmatrix}
  v(z)
  = (-\bar\beta z + \alpha)^{-s} 
  v\Big( \frac{\bar\alpha z-\beta}{-\bar\beta z+\alpha}    \Big).
\end{equation*}
Since $G$ acts transitively on the disc $\mathbb{D}$ an argument by
Kobayashi \cite{Kobayashi1961} shows that $\pi_s$ is irreducible.
From now on we denote by $u$ the wavelet in $\mathcal{H}_s$ which
is identically $1$ on the disc
\begin{equation*}
  u(z) = 1_{\mathbb{D}}(z).
\end{equation*}
Then the wavelet coefficients $W_u^s(u)$ for $s>1$ 
can be calculated explicitly as
\begin{equation*}
  W_u^s(u)(a,b) 
  = \ip{u}{\pi_s(a,b)u} 
  = 2^{s}(a+a^{-1}-ib)^{-s}.
\end{equation*}
The following basic fact is useful to us
\begin{lemma}\label{lem:int1}
  \begin{equation*}
    \int ((a+a^{-1})^2 + b^2)^{-t} a^r
    \frac{da\,db}{a^2} < \infty
  \end{equation*}
  if and only if $2(1-t) < r < 2t$.
\end{lemma}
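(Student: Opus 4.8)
The plan is to pass to polar-type coordinates and reduce the two-dimensional integral to a product of two one-dimensional integrals whose convergence is elementary. First I would substitute $a = e^{\rho}$ with $\rho \in \mathbb{R}$, so that $\frac{da}{a} = d\rho$, $a + a^{-1} = 2\cosh\rho$, and $a^r = e^{r\rho}$, and the left-invariant measure $\frac{da\,db}{a^2}$ becomes $e^{-\rho}\,d\rho\,db$. The integral then reads
\begin{equation*}
  \int_{\mathbb{R}}\int_{\mathbb{R}} \bigl(4\cosh^2\rho + b^2\bigr)^{-t} e^{(r-1)\rho}\,db\,d\rho .
\end{equation*}
For fixed $\rho$, the inner $b$-integral is, after the substitution $b = 2\cosh\rho \cdot \tau$,
\begin{equation*}
  (2\cosh\rho)^{1-2t}\int_{\mathbb{R}} (1+\tau^2)^{-t}\,d\tau ,
\end{equation*}
which is a finite positive constant precisely when $t > 1/2$ (and otherwise diverges, so $t>1/2$ is forced); note $2(1-t) < r < 2t$ forces $t > 1/2$ as well, consistently.

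With the $b$-integral evaluated, the problem collapses to deciding for which $r$ the remaining integral
\begin{equation*}
  \int_{\mathbb{R}} (\cosh\rho)^{1-2t}\, e^{(r-1)\rho}\,d\rho
\end{equation*}
is finite. Here I would use the asymptotics $\cosh\rho \sim \tfrac12 e^{|\rho|}$ as $|\rho|\to\infty$, so the integrand behaves like a constant times $e^{(1-2t)|\rho|}e^{(r-1)\rho}$; near $\rho = 0$ the integrand is bounded, so convergence is entirely a matter of the two tails. As $\rho \to +\infty$ the exponent is $(1-2t)\rho + (r-1)\rho = (r - 2t)\rho$, giving convergence iff $r < 2t$; as $\rho \to -\infty$ the exponent is $(1-2t)(-\rho) + (r-1)\rho = (2t - r - 2\rho\cdot 0)\ldots$ — more carefully, writing $\rho = -\sigma$ with $\sigma \to +\infty$, the exponent is $(1-2t)\sigma - (r-1)\sigma = (2 - 2t - r)\sigma$, giving convergence iff $2 - 2t - r < 0$, i.e. $r > 2(1-t)$. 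Combining the two tail conditions yields exactly $2(1-t) < r < 2t$.

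The only point needing a little care — the one I would single out as the main (minor) obstacle — is handling the interaction of the two integrations rigorously rather than just formally: one should invoke Tonelli's theorem to justify treating the nonnegative integrand as an iterated integral, and one should confirm that the constant $\int_{\mathbb{R}}(1+\tau^2)^{-t}\,d\tau$ pulled out of the $b$-integral is genuinely finite and nonzero so that the reduction is an equivalence and not merely an implication in one direction. Once Tonelli is in place, everything else is the routine tail analysis sketched above, and the stated range $2(1-t) < r < 2t$ drops out immediately (in particular the emptiness of this range when $t \le 1/2$ matches the divergence of the $b$-integral there).
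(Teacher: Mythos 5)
Your proof is correct. The paper states this lemma as a basic fact and gives no proof of its own, so there is nothing to compare against: your reduction via $a=e^{\rho}$, scaling the inner integral with $b=2\cosh\rho\,\tau$ to pull out the constant $\int_{\mathbb{R}}(1+\tau^{2})^{-t}\,d\tau$, and the exponential tail analysis of $\int(\cosh\rho)^{1-2t}e^{(r-1)\rho}\,d\rho$ is a complete verification, with Tonelli and the positivity of the integrand making the reduction a genuine equivalence (including the vacuous case $t\le 1/2$, where the $b$-integral diverges and the range $2(1-t)<r<2t$ is empty). The only blemish is the momentarily garbled exponent computation for the $\rho\to-\infty$ tail, which you immediately redo correctly, arriving at the stated condition $2(1-t)<r<2t$.
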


This shows that the representations $\pi_s$ are
square integrable for all $s>1$ and integrable for
$s>2$. That $\pi_s$ is not integrable 
for $1<s\leq 2$ turns out to
not matter for the construction of coorbit spaces for
these representations. 

Given the submultiplicative 
weight $w_r(a,b) = 2^r[(a+a^{-1})^2+b^2]^{r/2}$ for $r\geq 0$, let 
$L^p_r(G)$ denote the space 
\begin{equation*}
  L^p_r(G)
  = \left\{ f \Big| \| f \|_{L^p_r} 
    = \left( \int |f(a,b)w_r(a,b)|^p \,\frac{da\,db}{a^2}\right)^{1/p} 
    < \infty      
  \right\}.
\end{equation*}
We now construct coorbit spaces for the representations $\pi_s$ 
related to the spaces
$ L^p_r(G)$. For this we use the smooth vectors of
the representation $\pi_s$.
The following characterization of
the smooth vectors and its dual can
be found in \cite{Olafsson1988} and more generally in \cite{Chebli2004}.
\begin{lemma}
  The smooth vectors $\mathcal{H}_s^{\infty}$ for $\pi_s$
  are the power series $\sum_{k=0}^\infty a_kz^k$ for which
  there for any $m$ exists a constant $C$ such that
  \begin{equation*}
    |a_k|^2 \leq C \frac{(s+k-1)!}{(s-1)!k!}
    (1+k)^{-m}.
  \end{equation*}
  The conjugate dual $\mathcal{H}_s^{-\infty}$ of this space consists
  of formal power series $\sum_{k=0}^\infty b_kz^k$ for which
  there is an $m$ and a constant $C$ such that
  \begin{equation*}
    |b_k|^2 \leq C \frac{(s+k-1)!}{(s-1)!k!}
    (1+k)^{m}.
  \end{equation*}
\end{lemma}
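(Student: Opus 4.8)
The statement to be proved characterises the smooth vectors $\mathcal{H}_s^\infty$ of the discrete series representation $\pi_s$ and its conjugate dual $\mathcal{H}_s^{-\infty}$ in terms of the decay (respectively growth) of Taylor coefficients. The natural plan is to diagonalise the action of the maximal compact subgroup. Concretely, the rotation subgroup $K=\mathrm{SO}(2)\subseteq\mathrm{SU}(1,1)$, which under our parametrisation corresponds to matrices $\mathrm{diag}(e^{i\theta},e^{-i\theta})$, acts on $\mathcal{H}_s$ by $\pi_s(k_\theta)z^k = e^{-i(s+2k)\theta}z^k$ (up to a normalisation I would pin down from the explicit formula for $\pi_s$), so the monomials $z^k$ are the $K$-weight vectors and form an orthogonal basis of $\mathcal{H}_s$; a direct computation with the pairing $(\cdot,\cdot)_s$ gives $\|z^k\|_s^2 = \frac{(s-1)!\,k!}{(s+k-1)!}$, which is exactly the reciprocal of the binomial factor appearing in the statement. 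First I would record these two facts.

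Next, the key representation-theoretic input: for a representation of a Lie group, $v$ is a smooth vector if and only if it is smooth for the restriction to a maximal compact subgroup together with smoothness in the remaining directions; but since $G$ here is generated together with $K$ inside $\mathrm{SU}(1,1)$ and $\pi_s$ extends to the discrete series of $\mathrm{SU}(1,1)$, it is cleanest to invoke the standard fact (as in \cite{Olafsson1988}, \cite{Chebli2004}) that the smooth vectors coincide with the $C^\infty$-vectors for the compact subgroup $K$, equivalently the intersection of the domains of all powers of the infinitesimal generator $\Delta$ of $\pi_s(k_\theta)$. Since $\Delta z^k = i(s+2k)z^k$, membership of $v=\sum a_k z^k$ in the domain of every power of $\Delta$ is equivalent to $\sum_k |a_k|^2\|z^k\|_s^2 (1+k)^{2N}<\infty$ for every $N$, which — by comparing a single term to the sum and conversely summing a convergent series — is equivalent to the stated pointwise estimate $|a_k|^2\le C\frac{(s+k-1)!}{(s-1)!\,k!}(1+k)^{-m}$ for every $m$. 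The dual statement follows formally: $\mathcal{H}_s^{-\infty}$ is the union of the domains of $\Delta^{-N}$ (completions in the negative Sobolev norms), giving the growth condition $|b_k|^2\le C\frac{(s+k-1)!}{(s-1)!\,k!}(1+k)^{m}$ for some $m$; one verifies the duality pairing $\langle\sum b_k z^k,\sum a_k z^k\rangle=\sum b_k\overline{a_k}\|z^k\|_s^2$ converges under these two conditions.

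The main obstacle I anticipate is not the functional-analytic bookkeeping — that is routine once the $K$-weight decomposition is in place — but rather justifying that smoothness for the one-parameter group $\{\pi_s(k_\theta)\}$ already captures smoothness for the full group action. For a general representation this is false, but for $\pi_s$ it holds because the Lie algebra of $\mathrm{SU}(1,1)$ is generated by $\mathfrak{k}$ together with the noncompact generators that act as raising/lowering operators shifting $k\mapsto k\pm 1$, and these shift operators are automatically bounded on each Sobolev-type space defined by the $\Delta$-weights (their matrix coefficients grow only polynomially in $k$). I would therefore devote the core of the argument to: (i) writing down the noncompact generators explicitly as operators on Taylor coefficients, (ii) checking they map each space $\{\sum|a_k|^2\|z^k\|_s^2(1+k)^{2N}<\infty\}$ continuously into the one with $N$ replaced by $N-\tfrac12$ (or just into the same scale), and (iii) concluding that the $C^\infty$-vectors for $G$ equal those for $K$. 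With that in hand, the equivalence with the stated coefficient bounds is immediate, and the dual description is obtained by transposition. Alternatively, if one is willing to cite \cite{Olafsson1988, Chebli2004} directly for the identification of $\mathcal{H}_s^{\pm\infty}$ as the projective/inductive limit of the $\Delta$-Sobolev spaces, steps (i)–(iii) can be replaced by that citation and the proof reduces to the elementary computation of $\|z^k\|_s^2$ and the elementary equivalence between the sequence-space condition and the pointwise estimate.
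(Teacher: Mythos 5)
Your proposal is correct in outline, but it is worth saying up front that the paper does not actually prove this lemma: it is quoted with a reference to \cite{Olafsson1988} and, more generally, \cite{Chebli2004}, so the ``paper's proof'' is a citation, and your second alternative (cite those sources for the identification of $\mathcal{H}_s^{\pm\infty}$ with the projective/inductive limit of the $K$-Sobolev scale, then do the elementary computation of $\|z^k\|_s^2=\frac{(s-1)!\,k!}{(s+k-1)!}$ and the equivalence between the weighted-$\ell^2$ conditions and the stated pointwise bounds) is essentially what the authors intend. Your first alternative reconstructs the standard argument behind those references: diagonalize the elliptic generator on the $K$-weight basis $z^k$, identify the smooth vectors with $\bigcap_N\mathrm{Dom}(\Delta^N)$, and check that the noncompact generators act as shift operators with polynomially growing matrix coefficients, so the $\Delta$-Sobolev scale is stable under $U(\mathfrak{g})$ and $K$-smoothness already gives smoothness for the whole group; the dual description then follows by transposition, exactly as you say. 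This buys a self-contained proof and correctly isolates where the real work lies, at the price of two details you should be explicit about: (i) for non-integer $s$ the formula $(-\bar\beta z+\alpha)^{-s}$ only defines a representation of the universal cover of $\mathrm{SU}(1,1)$ (on the simply connected triangular subgroup $G$ there is no branch problem), so ``extends to the discrete series of $\mathrm{SU}(1,1)$'' should be read on the covering group, where the compact generator still acts by $z^k\mapsto e^{-i(s+2k)\theta}z^k$ and the weight computation is unchanged; and (ii) passing from ``all Lie-algebra monomials applied to $v$ stay in $\mathcal{H}_s$'' to genuine group-smoothness needs a standard theorem (Nelson's elliptic-regularity result, or the Casimir-is-scalar trick reducing the group Laplacian to the $K$-Laplacian), which is the cleanest substitute for your steps (i)--(iii) and is precisely what the cited references use. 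Neither point is a gap in the sense of a failing step, but both should be pinned down if the sketch is to replace the citation.
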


By \cite{Warner1972a} p. 254 we know 
that $\mathcal{H}_s^\infty$ is irreducible if and only if
$\mathcal{H}$ is, and thus $u$ is cyclic in $\mathcal{H}_s^\infty$.
Furthermore the smooth vectors $\mathcal{H}_s^{\infty}$
are weakly dense in the dual $\mathcal{H}_s^{-\infty}$. 

\begin{theorem}\label{thm:bergmancoorbits}
  The spaces
  $\Co_{\mathcal{H}_s^\infty}^{u} L^p_r$ are non-zero
  $\pi_s$-invariant
  Banach spaces when $2-s < r+2/p < s$.
\end{theorem}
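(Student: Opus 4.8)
The plan is to verify that the pair $(u,\pi_s)$ together with the weighted space $Y=L^p_r(G)$ satisfies Assumption~\ref{assumption1}, and then invoke Theorem~\ref{mainthm} to obtain the Banach space structure and $\pi_s$-invariance; non-triviality follows because $u$ itself (or a smooth vector close to it) lies in the coorbit. Since $\pi_s$ is unitary on $\mathcal H_s$ and $(\mathcal H_s^\infty,\mathcal H_s,\mathcal H_s^{-\infty})$ is a Gelfand triple with $u$ cyclic in $\mathcal H_s^\infty$ and $\mathcal H_s^\infty$ weakly dense in $\mathcal H_s^{-\infty}$ (both recalled just above the statement), the natural tool is Theorem~\ref{thm:coorbitsunitaryrepns}: it suffices to check (i) the reproducing formula $W_u(v)*W_u(u)=W_u(v)$ for all $v\in \mathcal H_s^{-\infty}$, and (ii) the bilinear continuity of $(F,v)\mapsto \int_G |F(x)|\,|W_u(v)(x)|\,dx$ on $L^p_r(G)\times \mathcal H_s^\infty$.

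For (i), square-integrability of $\pi_s$ (a consequence of Lemma~\ref{lem:int1}, which gives $W_u^s(u)\in L^2(G)$) yields the Duflo--Moore reproducing formula $W_u(v)*W_u(u)=W_u(v)$ on $\mathcal H_s$ after normalizing $u$; one then extends it from $\mathcal H_s$ (or from $\mathcal H_s^\infty$) to all of $\mathcal H_s^{-\infty}$ by weak density, exactly as in the proof of Theorem~\ref{mainthm}(\ref{prop1}), using that $\phi\mapsto W_u(\phi)*W_u(u)(y)$ is weakly continuous because $W_u(u)(x^{-1})\,\overline{W_u(u)(x^{-1}y)}$ is integrable against the wavelet coefficients of smooth vectors — and here the explicit decay of $W_u^s(u)$ from the formula $W_u^s(u)(a,b)=2^s(a+a^{-1}-ib)^{-s}$ makes that pairing absolutely convergent.

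For (ii) — which I expect to be the main obstacle — the point is a weighted convolution/duality estimate. Using $|W_u(u)(a,b)| = 2^s((a+a^{-1})^2+b^2)^{-s/2}$ and the submultiplicativity $w_r(xy)\le w_r(x)w_r(y)$, Hölder's inequality with exponents $p,q$ reduces the claim to showing that $x\mapsto w_r(x)^{-1}\,K_Q(W_u(v))(x)$ — or more directly $w_{-r}\cdot W_u(v)$ — lies in $L^q(G)$ for $v\in\mathcal H_s^\infty$, with continuous dependence on $v$. Because $\mathcal H_s^\infty$ carries the seminorms from the $a_k$-estimates in the Lemma, it is enough to bound $\|w_{-r}W_u(v)\|_{L^q}$ by finitely many such seminorms; this in turn follows from an estimate on the decay of $W_u(v)(a,b)$ in terms of $(a+a^{-1})^2+b^2$, combined with Lemma~\ref{lem:int1}, whose convergence range $2(1-t)<\text{(exponent of }a)<2t$ is precisely what produces the stated condition $2-s<r+2/p<s$ (equivalently $2(1-s)<2(r/p)+2/p-\text{something}<2s$ after tracking the weight powers and the $a^{-2}\,da\,db$ measure). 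I would first establish the needed pointwise/majorant bound on $W_u(v)$ for smooth $v$ using the Fourier–series expansion of $v$ and the known decay of $W_u^s(z^k)$, then feed it into Lemma~\ref{lem:int1}; the bookkeeping of exponents is the delicate part, and the inequalities $2-s<r+2/p$ and $r+2/p<s$ should emerge as the two one-sided integrability conditions.

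Finally, non-triviality: under the hypothesis $2-s<r+2/p<s$ one checks that $u\in\mathcal H_s^\infty$ satisfies $W_u(u)=W_u^s(u)\in L^p_r(G)$ — again a direct application of Lemma~\ref{lem:int1} to $|W_u^s(u)(a,b)|^p w_r(a,b)^p=2^{(s+r)p}((a+a^{-1})^2+b^2)^{(r-s)p/2}$, whose integrability against $a^{-2}\,da\,db$ is equivalent to $2(1-(s-r)p/2)<-2<2(s-r)p/2$, i.e. to the same range of parameters — so $u\in\Co^u_{\mathcal H_s^\infty}L^p_r\neq\{0\}$. Then Theorems~\ref{thm:coorbitsunitaryrepns} and \ref{mainthm} give that $\Co^u_{\mathcal H_s^\infty}L^p_r$ is a $\pi_s^\cdual$-invariant Banach space, and since $\pi_s$ is unitary $\pi_s^\cdual$ is unitarily equivalent to $\pi_s$, completing the proof.
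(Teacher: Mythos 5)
Your proposal is correct and takes essentially the same route as the paper: it checks the hypotheses of Theorem~\ref{thm:coorbitsunitaryrepns} via H\"older's inequality together with Lemma~\ref{lem:int1}, dominates $|W_u^s(v)|$ for smooth $v$ by $C_v\,|W_u^s(u)|$ through the power-series expansion and the formula for $W_u^s(z^k)$, and obtains non-triviality from $W_u^s(u)\in L^p_r$. The only slip is cosmetic: Lemma~\ref{lem:int1} already carries the measure $a^{-2}\,da\,db$, so the condition for $W_u^s(u)\in L^p_r$ reads $2\bigl(1-(s-r)p/2\bigr)<0<(s-r)p$, i.e.\ $r+2/p<s$, rather than your displayed inequality with $-2$ in the middle, which double-counts the density $a^{-2}$.
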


\begin{proof}
  We show first
  that the mapping 
  \begin{equation*}
  L^p_r(G)\ni  
  f\mapsto \int_G |f(a,b)||W_u^s(u)(a,b)|\,\frac{da\,db}{a^2} 
  \in \mathbb{R}
  \end{equation*}
  is continuous for $2-s < r+2/p$.
  First assume that $p>1$ and let $1/p+1/q =$, then
  \begin{align*}
    \int |f(a,b) W_u^s(u)(a,b)| \,\frac{da\,db}{a^2} 
    &=
    \int |f(a,b)|w_{r}(a,b)w_{-r}(a,b) |W_u^s(u)(a,b)| \,\frac{da\,db}{a^2}\\
    &\leq
    \int |f(a,b)w_{r}(a,b)|^p \,\frac{da\,db}{a^2} 
    \int |w_{-r}(a,b) W_u^s(u)(a,b)|^q \,\frac{da\,db}{a^2} \\
    &= C\| f\|_{L^p_r}^p 
    \int ((a+a^{-1})^2+b^2)^{-sq/2-rq/2} \,\frac{da\,db}{a^2}.
  \end{align*}
  The last integral is bounded if and only if
  $2-(s+r)q < 0$ and this can be rewritten to the condition that
  \begin{equation*}
    2-s < r+ \frac{2}{p}.
  \end{equation*}
  If $p=1$ then the integral 
  \begin{equation*}
      \int |f(a,b) W_u^s(u)(a,b)| \,\frac{da\,db}{a^2} 
      \leq \| f\|_{L^1_r} \|W_u^s(u)w_{-r} \|_{\infty}
  \end{equation*}
  is finite if $s+r\geq 0$ and in particular if $2-s < r+ \frac{2}{p}$.
  
  Next we show that
  for a given $f\in L^p_r$ the mapping 
  $$
  \mathcal{H}_s^\infty \ni v \mapsto 
  \int\int f(a,b)W_u^s(v)(a,b) \,\frac{da\,db}{a^2}
  $$
  is continuous for $2-s < r+ 2/p$. 
  Let $v$ be a smooth vector with expansion $\sum_{k=0}^\infty
  a_kz^k$.
  Since $W_u^s(z^k)= \bar\alpha^{-s} (\beta/\bar\alpha)^k$
  it can be shown that
  \begin{equation*}
    |W_u^s(v)(a,b)| 
    \leq |W_u^s(u)(a,b)| \sum_{k=0}^\infty |a_k|.
  \end{equation*}
  Therefore $|W_u^s(v)|\leq C_v |W_u^s(u)|$ where the constant 
  $C_v$ depends continuously on $v$.
  Thus we only need to require that the integral
  \begin{equation*}
    \int\int |f(a,b) W_u^s(u)(a,b)| \frac{da\,db}{a^2}
  \end{equation*}
  is finite, which we have proven above.

  Lastly, by Lemma~\ref{lem:int1} we see, that
  $u$ is in the coorbit space for $r + 2/p < s$ .
  Thus the coorbit space
  $\Co_{\mathcal{H}_s^\infty}^{u} L^p_r$ is a non-zero Banach space
  when $2-s < r+2/p < s$.
\end{proof}

In the next section we will prove that the spaces defined
in Theorem~\ref{thm:bergmancoorbits} are
in fact Bergman spaces. This was mentioned in
\cite[Section 7]{Feichtinger1988}, but not many details were given.

\subsection{Continuous Description of Bergman Spaces}
We start with a lemma
\begin{lemma}
  Assume that $2-s < r+2/p < s$. If
  $f\in A^p_{(s-r)p/2}$ then 
  $f\in \mathcal{H}_s^{-\infty}$.
\end{lemma}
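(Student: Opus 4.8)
The plan is to show that a function $f\in A^p_{(s-r)p/2}(\mathbb{D})$, written as a power series $f(z)=\sum_{k=0}^\infty b_k z^k$, has coefficients satisfying the polynomial growth bound $|b_k|^2 \leq C\,\frac{(s+k-1)!}{(s-1)!\,k!}(1+k)^m$ characterizing $\mathcal{H}_s^{-\infty}$. First I would express $b_k$ via a Cauchy-type integral against $f$. The natural device is to use the reproducing kernel of the Bergman space $\mathcal{H}_s=A^2_s(\mathbb{D})$: the monomials $z^k$ are orthogonal in $\ip{\cdot}{\cdot}_s$ with $\|z^k\|_s^2 = \frac{(s-1)!\,k!}{(s+k-1)!}$, so $b_k = \|z^k\|_s^{-2}\,\ip{f}{z^k}_s$, provided this pairing makes sense. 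Since $f$ need not lie in $\mathcal{H}_s$, I would instead integrate over a subdisc $|z|\le \rho<1$, get $b_k \rho^{2k} \|z^k\|_{s,\rho}^2$-type identities, and then let $\rho\to 1$, or more cleanly use the circle-average identity $b_k r^k = \frac{1}{2\pi}\int_0^{2\pi} f(re^{i\theta})e^{-ik\theta}\,d\theta$ and integrate this against $(1-r^2)^{s-2} r\,dr$ with a suitable test weight.

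The key step is then a Hölder estimate. Writing $b_k r^k$ as the $k$-th Fourier coefficient of $\theta\mapsto f(re^{i\theta})$ and applying Hölder in $\theta$ with exponents $p$ and $q$ (where $1/p+1/q=1$), one gets $|b_k|^p r^{kp} \leq \frac{1}{2\pi}\int_0^{2\pi}|f(re^{i\theta})|^p\,d\theta$. Multiplying by an appropriate power of $r$ and of $(1-r^2)$ and integrating over $r\in[0,1)$, the right-hand side is controlled by $\|f\|_{A^p_{(s-r)p/2}}^p$ times a finite constant (finite precisely because of the weight exponent, which is where $r+2/p<s$ and $2-s<r+2/p$ enter, guaranteeing the Beta-function integral in $r$ converges), while the left-hand side becomes $|b_k|^p$ times $\int_0^1 r^{kp+\alpha}(1-r^2)^{\gamma}\,dr \sim B(\tfrac{kp+\alpha+1}{2}, \gamma+1) \sim c\,k^{-(\gamma+1)}$ for large $k$ by standard Beta-function asymptotics. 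Solving for $|b_k|$ gives $|b_k|^p \leq C\,k^{\gamma+1}\|f\|^p$, i.e. polynomial growth in $k$, which is exactly what is needed.

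Finally I would compare this polynomial bound against the Gamma-quotient $\frac{(s+k-1)!}{(s-1)!\,k!}$, which by Stirling behaves like $\frac{k^{s-1}}{\Gamma(s)}$ as $k\to\infty$; since $(1+k)^m$ can absorb any fixed polynomial factor for $m$ large, the estimate $|b_k|^2 \leq C\,\frac{(s+k-1)!}{(s-1)!\,k!}(1+k)^m$ follows for suitable $C, m$. Thus $f\in\mathcal{H}_s^{-\infty}$.

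The main obstacle I anticipate is the rigorous justification of the coefficient-extraction identity and the interchange of limits/integrals when $f\notin\mathcal{H}_s$: one must work on $|z|\le\rho$ and pass to the limit, or argue directly with the circle averages, being careful that $\int_0^{2\pi}|f(re^{i\theta})|^p\,d\theta$ is finite for a.e.\ $r$ and integrable against the weight — this is exactly the content of $f\in A^p_{(s-r)p/2}$, but getting the bookkeeping of exponents right (so that the $r$-integral converges iff $2-s<r+2/p<s$) is the delicate part. The Fourier/Hölder step and the Stirling comparison are routine once that is set up.
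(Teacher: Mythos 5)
Your proposal is correct, but it follows a genuinely different route from the paper. The paper extracts the coefficients via the weighted Bergman reproducing formula $f(z) = \frac{s-1}{\pi}\int_{\mathbb{D}} f(w)\frac{(1-|w|^2)^{s-2}}{(1-z\bar w)^{s}}\,dw$ (citing Theorem 1.10 of Hedenmalm--Korenblum--Zhu, valid since $(s-r)p/2-1<(s-1)p$), differentiates under the integral at $z=0$, and applies H\"older over the whole disc; there the lower bound $2-s<r+2/p$ is exactly what makes the dual integral $\int_{\mathbb{D}}(1-|w|^2)^{(s+r)q/2-2}\,dw$ finite, and the conclusion is $|b_k|\le C\|f\|\frac{(s+k-1)!}{(s-1)!\,k!}$, which is then bounded by a power of $(1+k)$. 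You instead use the elementary circle-average identity $b_k r^k=\frac{1}{2\pi}\int_0^{2\pi}f(re^{i\theta})e^{-ik\theta}\,d\theta$ (valid for every $r<1$ by uniform convergence of the power series, so the limiting argument you worry about is not actually needed), H\"older in $\theta$, and integration against the Bergman weight, with Beta-function asymptotics producing the polynomial bound $|b_k|^p\le Ck^{(s-r)p/2-1}\|f\|^p$. Your approach is more self-contained (no reproducing-kernel citation) and in fact only uses the upper inequality $r+2/p<s$, i.e. $(s-r)p/2>1$, for convergence of the radial Beta integral; your remark that $2-s<r+2/p$ is also what makes it converge is inaccurate, though harmless since that hypothesis is available. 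Both arguments end the same way: polynomial growth of $|b_k|$ suffices because the Gamma quotient $\frac{(s+k-1)!}{(s-1)!\,k!}$ is bounded below (indeed grows like $k^{s-1}$), so a large $m$ absorbs the polynomial factor.
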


\begin{proof}
  We need to estimate the coefficients $b_k$ where
  $f(z) = \sum_{k=0}^\infty b_k z^k$. For this let us first
  estimate $f^{(k)}(0)$. The condition on 
  $s$,$r$ and $p$ means in particular that
  $(s-r)p/2 -1 < (s-1)p$ and 
  we can use Theorem 1.10 in \cite{Hedenmalm2000}
  to get
  \begin{equation*}
    f(z) 
    = \frac{(s -1)}{\pi} \int_{\mathbb{D}} f(w) 
    \frac{(1-|w|^2)^{s -2}}{(1-z\bar w)^s}\, dw.
  \end{equation*}
  Differentiate under the integral sign $k$ times (which is allowed when 
  for example $|z|\leq 1/2$) 
  \begin{equation*}
    f^{(k)}(z) 
    = (s-1)s(s +1)\dots (s +k-1)
    \int_{\mathbb{D}} f(w) 
    \frac{(1-|w|^2)^{s - 2}}{(1-z\bar w)^{s+k}} \bar w^k\, dw
  \end{equation*}
  and insert $z=0$ to get
  \begin{equation*}
    f^{(k)}(0) 
    = (s-1)s(s +1)\dots (s +k-1)
    \int_{\mathbb{D}} f(w) 
    (1-|w|^2)^{s - 2}\bar w^k\, dw.
  \end{equation*}
  The absolute value of the integral can be estimated by
  \begin{align*}
    \int_{\mathbb{D}} |f(w)| (1-|w|^2)^{s-2}\, dw  
    &= \int_{\mathbb{D}} |f(w)| (1-|w|^2)^{(s-r)/2-2/p} 
    (1-|w|^2)^{(s+r)/2-2/q} \, dw  \\
    &\leq 
    \| f\|_{A^p_{(s-r)p/2}}
    \Big( \int_{\mathbb{D}} (1-|w|^2)^{(s+r)q/2-2}\, dw \Big)^{1/q}.
  \end{align*}
  The last integral is finite when $2-s < r+2/p$, and
  therefore the coefficients $b_k$ can be estimated by
  \begin{equation*}
    |b_k| 
    = \frac{|f^{(k)}(0)|}{k!}
    \leq C \| f\|_{A^p_{(s-r)p/2}}  
    \frac{(s +k -1)!}{(s -1)!k!}.
  \end{equation*}
  Let $\tau =\lceil s \rceil$ then we 
  can estimate
  \begin{equation*}
    \frac{(s +k -1)!}{k!}
    \leq \frac{(\tau +k -1)!}{k!}
    = \underbrace{(\tau+k-1)(\tau +k -2)\cdots (1+k)}_{\text{$\tau$ terms}}
    \leq \tau^\tau (1+k)^\tau,
  \end{equation*}
  and since $\tau$ is fixed there is a constant $C$ such that
  \begin{equation*}
    |b_k|^2
    \leq C \| f\|_{A^p_{(s-r)p/2}}  
    (1+k)^{2\tau}.
  \end{equation*}
  This shows that $f\in \mathcal{H}_s^{-\infty}$.
\end{proof}

\begin{theorem}
  The spaces $A^p_{(s-r)p/2}(\mathbb{D})$ correspond to the
  coorbits $\Co_{\mathcal{H}_s^\infty}^u L^p_r(G)$
  from Theorem~\ref{thm:bergmancoorbits}
  for $1 < (s-r)p/2 <(s-1)p +1$.
\end{theorem}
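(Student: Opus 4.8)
The plan is to realize the voice transform $W_u^s$ as, up to a fixed positive constant, an isometry from $A^p_{(s-r)p/2}(\mathbb{D})$ onto its image in $L^p_r(G)$, and to check that the coorbit space and the Bergman space coincide as subsets of $\mathcal{H}_s^{-\infty}$. Recall from the coefficient description that an element $\phi=\sum_{k\ge 0}b_kz^k$ of $\mathcal{H}_s^{-\infty}$ has $|b_k|$ bounded by a fixed power of $k$ times $\binom{s+k-1}{k}^{1/2}$, so the series converges on $\mathbb{D}$ and defines a holomorphic function; thus $\mathcal{H}_s^{-\infty}\subseteq\mathcal{O}(\mathbb{D})$, while the preceding lemma gives $A^p_{(s-r)p/2}(\mathbb{D})\subseteq\mathcal{H}_s^{-\infty}$ in the stated range. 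Hence both spaces live inside $\mathcal{O}(\mathbb{D})$ and it suffices to evaluate $\|W_u^s(\phi)\|_{L^p_r}$ for $\phi\in\mathcal{H}_s^{-\infty}$.

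First I would upgrade the identity $W_u^s(z^k)=\bar\alpha^{-s}(\beta/\bar\alpha)^k$ (used in the proof of Theorem~\ref{thm:bergmancoorbits}, writing $g=\begin{pmatrix}\alpha&\beta\\\bar\beta&\bar\alpha\end{pmatrix}$ in $\mathrm{SU}(1,1)$) to arbitrary $\phi$. Approximating $\phi$ weakly by the partial sums of its power series and using that $\pi_s(g)u\in\mathcal{H}_s^\infty$, together with continuity of the pairing against this smooth vector, one obtains
\[
  W_u^s(\phi)(g)=\bar\alpha^{-s}\,\phi\!\left(\tfrac{\beta}{\bar\alpha}\right)=\bar\alpha^{-s}\,\phi(g\cdot 0),
\]
the right-hand series converging since $|\beta/\bar\alpha|<1$. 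Because $G=AN$ acts simply transitively on $\mathbb{D}$ (the stabilizer of $0$ in $G$ being trivial), the orbit map $\kappa(g)=g\cdot 0=\beta/\bar\alpha$ is a bijection $G\to\mathbb{D}$. From the parametrization one has $|\alpha|^2=\tfrac14\big((a+a^{-1})^2+b^2\big)$ and $1-|\kappa(g)|^2=|\alpha|^{-2}$, hence
\[
  |W_u^s(\phi)(g)|=|\alpha|^{-s}|\phi(\kappa(g))|=(1-|\kappa(g)|^2)^{s/2}\,|\phi(\kappa(g))|,\qquad w_r(g)=2^{2r}(1-|\kappa(g)|^2)^{-r/2}.
\]

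Next I would push the defining integral forward by $\kappa$. Since $\tfrac{da\,db}{a^2}$ is a left Haar measure and $\kappa$ intertwines left translation on $G$ with the $G$-action on $\mathbb{D}$, its image under $\kappa$ is a $G$-invariant measure on $\mathbb{D}$, hence equal to $c_0(1-|z|^2)^{-2}\,dz$ for some $c_0>0$. Substituting $z=\kappa(g)$ and collecting the powers of $1-|z|^2$ (namely $sp/2$ from $|W_u^s(\phi)|^p$, $-rp/2$ from $w_r^p$, and $-2$ from the measure) gives
\[
  \|W_u^s(\phi)\|_{L^p_r}^p=C\int_{\mathbb{D}}|\phi(z)|^p(1-|z|^2)^{(s-r)p/2-2}\,dz=C\,\|\phi\|_{A^p_{(s-r)p/2}}^p
\]
with $C=C(s,r,p)>0$. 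Thus $W_u^s(\phi)\in L^p_r$ if and only if $\phi\in A^p_{(s-r)p/2}$, and combined with the two inclusions into $\mathcal{O}(\mathbb{D})$ this yields $\Co_{\mathcal{H}_s^\infty}^u L^p_r(G)=A^p_{(s-r)p/2}(\mathbb{D})$ with proportional (hence equivalent) norms. Finally, the hypothesis $1<(s-r)p/2<(s-1)p+1$ is exactly the condition $2-s<r+2/p<s$ of Theorem~\ref{thm:bergmancoorbits} rewritten with $\sigma=(s-r)p/2$, so on this range the coorbit is the non-zero $\pi_s$-invariant Banach space produced there and the preceding lemma applies.

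The step I expect to be the crux is the pointwise identity $W_u^s(\phi)(g)=\bar\alpha^{-s}\phi(g\cdot 0)$ for genuine distributions $\phi\in\mathcal{H}_s^{-\infty}$ — i.e. that the pairing against $\pi_s(g)u$ commutes with the power-series expansion — together with the bookkeeping of the constant $c_0$ in the change of measure; the reproducing-kernel viewpoint (that $\pi_s(g)u$ is a scalar multiple of the Bergman kernel $K_s(\cdot,g\cdot 0)$, with modulus of the scalar equal to $(1-|g\cdot 0|^2)^{s/2}$) makes both points transparent once it is stated carefully.
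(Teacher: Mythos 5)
Your argument is correct, and its core is the same as the paper's: both proofs rest on the pointwise identity $W_u^s(f)(g)=\bar\alpha^{-s}f(\beta/\bar\alpha)$ and on converting the $L^p_r(G)$-norm of $W_u^s(f)$ into the weighted integral $\int_{\mathbb{D}}|f(z)|^p(1-|z|^2)^{(s-r)p/2-2}\,dz$, with the preceding lemma supplying $A^p_{(s-r)p/2}\subseteq\mathcal{H}_s^{-\infty}$. Where you differ is in how the two technical steps are justified. The paper proves the pointwise identity in one direction by invoking the $A^p_\sigma$ reproducing formula (Hedenmalm et al., Theorem 1.10, which is where the index restriction $(s-r)p/2-1<(s-1)p$ enters a second time) and in the other direction by the $A^2_s$ reproducing property on $\mathcal{H}_s^\infty$ together with a weak-density assertion; you instead derive it uniformly for every $\phi\in\mathcal{H}_s^{-\infty}$ from the monomial formula $W_u^s(z^k)=\bar\alpha^{-s}(\beta/\bar\alpha)^k$ by pairing the partial sums of the coefficient expansion against the smooth vector $\pi_s(g)u$, which makes the density step rigorous and lets a single norm identity handle both inclusions at once (the set equality then reads $\Co^u_{\mathcal{H}_s^\infty}L^p_r=\mathcal{H}_s^{-\infty}\cap A^p_{(s-r)p/2}=A^p_{(s-r)p/2}$). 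For the measure transfer, the paper performs the explicit substitution $\beta/\bar\alpha=i\phi(a^2,ab)$ and obtains the exact constant (indeed an exact isometry with its normalization), whereas you argue that $\kappa(g)=g\cdot 0$ is a diffeomorphism intertwining left translation with the M\"obius action, so the push-forward of Haar measure is the essentially unique $G$-invariant measure $c_0(1-|z|^2)^{-2}dz$; this is more conceptual but only yields proportional norms with an unspecified $c_0$, which suffices for the statement. Two small points to make explicit if you write this up: $\kappa$ is a diffeomorphism (so the push-forward is a Radon measure and the uniqueness-of-invariant-measure argument applies), and the pairing $\dup{\phi}{\pi_s(g)u}$ is given by an absolutely convergent coefficient series, which is what legitimizes exchanging it with the partial-sum limit; you gesture at both, and both are easy, but they are the places where the argument could otherwise be challenged.
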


\begin{proof}
  Assume that $f\in A^p_{(s-r)p/2}(\mathbb{D})$. 
  We already know that $f\in \mathcal{H}_s^{-\infty}$, so
  we can find the wavelet coefficient of $f$
  \begin{align*}
    W_u^s(f)(a,b)
    &= \frac{s-1}{\pi}\int_{\mathbb{D}} f(z) \overline{\pi_s(a,b)u(z)} 
    (1-|z|^2)^{s-2}\,dz \\
    &= \frac{s-1}{\pi} 
    \int_{\mathbb{D}} f(z) \frac{1}{(-\beta \bar z +\bar\alpha)^s}
    (1-|z|^2)^{s-2}\,dz \\
    &=  \frac{s-1}{\pi}\frac{1}{\bar\alpha^s} 
    \int_{\mathbb{D}} f(z) \frac{1}{(1-\frac{\beta}{\bar\alpha} \bar z)^s}
    (1-|z|^2)^{s-2}\,dz \\
    &= \frac{1}{\bar\alpha^s} 
    f\Big(\frac{\beta}{\bar\alpha}   \Big).
  \end{align*}
  In the last step we
  applied Theorem 1.10 in \cite{Hedenmalm2000} 
  provided that $(s-r)p/2-1 < (s-1)p$. 
  The function $\phi:G \mapsto \mathbb{D}$ given by
  \begin{equation*}
    \phi(a,b) 
    = \frac{a^2+b^2-1}{(1+a)^2+b^2} 
    + i \frac{-2b}{(1+a)^2+b^2}
  \end{equation*}
  is a bijection, and
  $\beta/\bar\alpha$ can be rewritten as
  \begin{equation*}
    \frac{\beta}{\bar\alpha}
    = 
    \frac{2(ab)}{(1+a^2)^2+(ab)^2}
    +i \frac{(a^2)^2+(ab)^2-1}{(1+a^2)^2+(ab)^2}
    = 
    i \phi(a^2,ab).
  \end{equation*}
  Therefore 
  \begin{equation*}
    W_u^s(f)(a,b)
    = \frac{2^s}{(a+a^{-1}-ib)^s} f(i \phi(a^2,ab)).
  \end{equation*}
  Then taking $L^p_r(G)$-norm of $W_u^s(f)$ and 
  changing to an integral over the disc we get
  \begin{align*}
    \frac{1}{2^{(s+r)p}}
    \int_G  |W_u^s(f)(a,b)w_r(a,b)|^p \frac{da\,db}{a^2}
    &=
    \int_G  \frac{1}{[(a+a^{-1})^2+b^2]^{(s-r)p/2}} |f(i \phi(a^2,ab))|^p
    \frac{da\,db}{a^2} \\
    &=
    \frac{1}{2}
    \int_G  \frac{1}{[(\sqrt{a}+\sqrt{a}^{-1})^2+b^2]^{(s-r)p/2}} 
    |f(i \phi(a,\sqrt{a}b))|
    \frac{da\,db}{a\sqrt{a}} \\
    &=
    \frac{1}{2}
    \int_G  \frac{1}{[(\sqrt{a}+\sqrt{a}^{-1})^2+(\sqrt{a}^{-1}b)^2]^{(s-r)p/2}} 
    |f(i \phi(a,b))|^p
    \frac{da\,db}{a^2} \\
    &=
    \frac{1}{2}
    \int_G  \Big[ \frac{a}{(1+a)^2+b^2} \Big]^{(s-r)p/2}
    |f(i \phi(a,b))|^p
    \frac{da\,db}{a^2} \\
    &=
    2
    \int_{\mathbb{D}} (1-|z|^2)^{(s-r)p/2} 
    |f(iz)|^p
    \frac{dz}{(1-|z|^2)^2} \\
    &= \| f\|_{A^p_{(s-r)p/2}}^p.
  \end{align*}
    
  We now show that an element of the coorbit space is 
  in the Bergman space.
  Since
  any $f\in \mathcal{H}_s^{\infty}$ is in $A^2_s(\mathbb{D})$ we
  know that 
  $$W_u^s(f)(a,b) = \frac{1}{\bar\alpha^s} 
  f\Big(\frac{\beta}{\bar\alpha}   \Big)$$ 
  by Proposition 1.4 in \cite{Hedenmalm2000}.
  Because $\mathcal{H}_s^{\infty}$ is weakly dense in
  $\mathcal{H}_s^{-\infty}$, this equality also holds for 
  $f\in \mathcal{H}_s^{-\infty}$.
  Therefore the calculations above are valid, and if 
  $f\in \Co_{\mathcal{H}_s^{\infty}}^u L^p_r(G)$ then
  $f$ is also in $A^p_{(s-r)p/2}(\mathbb{D})$.
\end{proof}

\subsection{Discretization} \label{sec:discretizationbergman}
In this section we obtain sampling theorems and atomic decompositions
for the Bergman spaces by use of the wavelet transform. 
We point out that these results include
the non-integrable representations which cannot
be described by the work of Feichtinger and Gr\"ochenig.

\begin{lemma}\label{lem:bergman2}
  The mappings $f\mapsto f*W_u^s(u)$ and
  $f\mapsto f*|W_u^s(u)|$ are continuous
  $L^p_r(G)\to L^p_r(G)$ for $s > r +2/p$.
\end{lemma}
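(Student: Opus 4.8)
The plan is to prove the convolution estimate by a weighted Young-type inequality. Recall from the explicit formula that $|W_u^s(u)(a,b)| = 2^s((a+a^{-1})^2+b^2)^{-s/2}$, so the convolution kernel is a radial-type function whose decay is governed by Lemma~\ref{lem:int1}. The operator $f \mapsto f*|W_u^s(u)|$ on $L^p_r(G)$ is bounded if and only if the conjugated operator $g \mapsto w_r \cdot ((w_r^{-1}g)*|W_u^s(u)|)$ is bounded on $L^p(G)$ with respect to the unweighted measure $\frac{da\,db}{a^2}$. Since $w_r$ is submultiplicative, $w_r(xy) \le w_r(x)w_r(y)$, one has the pointwise bound
\begin{equation*}
  w_r(y)\,\big((w_r^{-1}|f|)*|W_u^s(u)|\big)(y)
  \le |f| * \big(w_r |W_u^s(u)|\big)(y),
\end{equation*}
so it suffices to show that convolution with $w_r|W_u^s(u)| = 2^{s+r}((a+a^{-1})^2+b^2)^{(r-s)/2}$ is bounded on $L^p(G)$ (unweighted). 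By the standard Young inequality on the locally compact group $G$, this follows once $w_r|W_u^s(u)| \in L^1(G)$, i.e. once
\begin{equation*}
  \int ((a+a^{-1})^2+b^2)^{(r-s)/2}\,\frac{da\,db}{a^2} < \infty.
\end{equation*}

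First I would invoke Lemma~\ref{lem:int1} with $t = (s-r)/2$ and $r=0$ (the power of $a$ in that lemma is $0$ here, up to the $a^{-2}$ already absorbed in the Haar measure): the condition $2(1-t) < 0 < 2t$ becomes $s-r > 2$ and $s-r>0$, i.e. $s > r+2$. But this is weaker than what we want — we only get $s > r+2$, whereas the claim is $s > r+2/p$, which is genuinely better when $p>1$. So for $p>1$ the crude Young argument via $L^1$-kernel is not enough, and the sharper step is to interpolate: for $p>1$ write $w_r|W_u^s(u)|$ against the mixed-norm structure and use that a kernel lying in $L^p(G)$ (not just $L^1$) convolves $L^1 \to L^p$, combined with $L^\infty$ boundedness. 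Concretely, I would check that $w_r|W_u^s(u)| \in L^q(G)$ for $1/p+1/q=1$, which by Lemma~\ref{lem:int1} (again with $t=(s-r)q/2$, power $0$) requires $(s-r)q > 2$, i.e. $s-r > 2/q = 2(1-1/p)$, equivalently $s > r + 2 - 2/p$. Interpolating the two endpoint bounds — convolution with an $L^1$ kernel is $L^p\to L^p$, convolution with an $L^q$ kernel is $L^1 \to L^p$ hence dualizes to $L^{p'}\to L^\infty$ — via the Riesz--Thorin/Stein interpolation along the line connecting $L^1$ and $L^q$ integrability thresholds recovers exactly the condition $s > r + 2/p$. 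The $p=1$ case is immediate: $w_r|W_u^s(u)| \in L^1(G)$ needs $s-r>2 = 2/p$, matching the claim, and Young gives $L^1\to L^1$ boundedness directly.

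The main obstacle I anticipate is getting the interpolation bookkeeping exactly right so that the resulting threshold is $2/p$ and not something off by a constant — one must be careful that the submultiplicativity estimate does not lose powers, and that convolution on the non-unimodular group $G$ (modular function $\Delta(a,b)=a^{-2}$ enters) is handled on the correct side, since $f*W_u^s(u)$ uses $W_u^s(u)(x^{-1}y)$ and the reflection $g^\vee$ interacts with $\Delta$. I would resolve this by first verifying the analogous statement for $|W_u^s(u)|^\vee$ using that $|W_u^s(u)(x^{-1})| = |W_u^s(u)(x)|$ (which holds by unitarity of $\pi_s$, exactly as noted in the proof of Theorem~\ref{thm:coorbitsunitaryrepns}), so that reflection does not change the $L^p_r$ membership of the kernel, and then the Haar-measure/modular-function issues cancel. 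Once the kernel is shown to lie in the right $L^q_{-r}$-type space, both $f\mapsto f*W_u^s(u)$ and $f\mapsto f*|W_u^s(u)|$ are handled by the same estimate since only $|W_u^s(u)|$ appears in the bound.
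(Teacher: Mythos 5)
Your overall strategy (discard the weight by submultiplicativity, then treat the result as a convolution bound with kernel $w_r|W_u^s(u)|=2^{s+r}w_{r-s}$) runs into a genuine gap exactly at the step you yourself flag as the crux. Riesz--Thorin (or Stein) interpolation produces nothing here: it interpolates two honest endpoint bounds for one fixed operator, whereas under the hypothesis $s>r+2/p$ your fixed kernel $w_{r-s}$ need not lie in $L^1$ (that needs $s>r+2$) and, when $p>2$, need not lie in $L^q$ either (that needs $s>r+2-2/p$, which is strictly stronger than $s>r+2/p$); so neither of your proposed endpoints is available, and ``interpolating along the line connecting the $L^1$ and $L^q$ integrability thresholds'' is not an operation that yields a bound for a kernel satisfying neither condition. (Also, as stated, ``an $L^q$ kernel convolves $L^1\to L^p$'' is incorrect: $L^1*L^q\subseteq L^q$.) Worse, the reduction itself caps what any subsequent argument can achieve for $p>2$: after replacing $\frac{w_r(y)}{w_r(x)}|W_u^s(u)(x^{-1}y)|$ by the larger convolution kernel $w_{r-s}(x^{-1}y)$, you are asking for $L^p$-boundedness of convolution by a positive, inversion-symmetric kernel on an amenable group; such an operator is symmetric for the duality pairing, so $L^p$-boundedness forces $L^{p'}$- and hence (by interpolation) $L^2$-boundedness, and on $L^2$ positivity plus amenability force $\int w_{r-s}\,\Delta^{-1/2}\,d\mu<\infty$, i.e.\ $s-r>1$ by Lemma~\ref{lem:int1}. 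Thus for $p>2$ and $2/p<s-r\le 1$ the majorized operator is unbounded, and no interpolation after your first step can recover the claimed range.

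Your handling of the modular function is also not right, and fixing it changes the picture: the inversion symmetry $|W_u^s(u)(x^{-1})|=|W_u^s(u)(x)|$ does not make $\Delta$ ``cancel''. For right convolution on this group ($\Delta(a,b)=a^{-2}$) Young's inequality reads $\|f*k\|_p\le\|f\|_p\int|k|\,\Delta^{-1/q}\,d\mu$ with $1/p+1/q=1$, and with $k=w_{r-s}$ Lemma~\ref{lem:int1} turns this into $s-r>\max(2/p,2/q)$. So, ironically, your first step plus the correctly weighted Young already proves the lemma for $1\le p\le 2$ with no interpolation at all, but it cannot reach the threshold $2/p$ for $p>2$. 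The paper's proof does not pass to a pure convolution estimate: it inserts an auxiliary factor $a^t$ (a power of the modular function) into the integrand, applies H\"older with exponents $p,q$, evaluates the resulting Schur-type integral by left invariance, and only afterwards uses submultiplicativity of $w_{rp}$; the admissible $t$ are controlled by Lemma~\ref{lem:int1} and the $p$-dependent condition comes from optimizing over $t$. This free-parameter Schur test is precisely the device missing from your proposal (and, as a caution, even the paper's own bookkeeping of the constraints on $t$ yields additional inequalities such as $s>r+2/q$ when $p>2$, so this regime is genuinely delicate; in any case ``drop the weight quotient, then Young plus interpolation'' is not a substitute for it).
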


\begin{proof}
  In the following denote by $F_s$ the absolute value of
  the wavelet coefficient belongning
  to $\pi_s$, i.e. $F_s(a,b) = |\ip{u}{\pi_s(a,b)u}|$
  and notice that
  \begin{equation*}
    F_s(a,b) = w_{-s}(a,b)
  \end{equation*}
  In the calculations below we make some assumptions in order for the
  estimates to be true. At the end of the proof we collect these
  assumptions.
  
  Let $p> 1$ and
  assume that $f\in L^p_r(G)$. Let $q$ such
  that $1/p+1/q =1$ and further let $t$ be such
  that the following calculations hold (we will investigate this
  later)
  \begin{align*}
    \Big |\int\!\! \int &f(a,b) F_s((a,b)^{-1}(a_1,b_1))
    \,\frac{da\, db}{a^2}\Big|^p \\
    &\leq \Big(\int\!\! \int |f(a,b)| 
    |w_{-r-s(1/p+1/q)}((a,b)^{-1}(a_1,b_1))|^{1/p+1/q} 
    w_{r}((a,b)^{-1}(a_1,b_1))a^{t}
    \,\frac{da\, db}{a^2}\Big)^p \\
    &\leq \Big( \int\!\! \int |f(a,b)|w_{-rp-s}((a,b)^{-1}(a_1,b_1)) a^{-tp}
    \,\frac{da\, db}{a^2} \Big) \\
    &\qquad \times \Big( \int\!\! \int a^{tq}
    w_{rq-s}((a,b)^{-1}(a_1,b_1))^q \,\frac{da\, db}{a^2} \Big)^{p/q}.
  \end{align*}
  We know that $|w_r((a,b))| = |w_r((a,b)^{-1})|$ 
  so the second integral becomes
  \begin{align*}
    \int\!\! \int a^{tq}
    |w_{rq-s}((a_1,b_1)^{-1}(a,b))| \,\frac{da\, db}{a^2}
    &= 
    \int\!\! \int (aa_1)^{tq}
    |w_{rq-s}((a,b))| \,\frac{da\, db}{a^2} \\
    &= Ca_1^{tq}
  \end{align*}
  provided that $w_{rq-s}(a,b)a^{tq} \in L^1(G)$. 
  Thus we get
  \begin{align*}
    \Big |\int\!\! \int &f(a,b) F_s((a,b)^{-1}(a_1,b_1))
    \,\frac{da \,db}{a^2}\Big|^p \\
    &\leq C a_1^{tp}
    \int\!\! \int |f(a,b)|^p a^{-tp} |w_{-rp-s}((a,b)^{-1}(a_1,b_1))|
    \,\frac{da\, db}{a^2} 
  \end{align*}
  and we can estimate the norm of $f*F_s$ using Fubini's theorem
  \begin{align*}
    \| f*F_s \|_{L_r^p} 
    &\leq C \int\!\! \int\!\! \int\!\! \int |f(a,b)|^p
    a^{-tp} w_{-rp-s}((a,b)^{-1}(a_1,b_1)) \,\frac{da\, db}{a^2}
    w_{rp}(a_1,b_1)
    a_1^{tp} \frac{da_1 db_1}{a_1^2} \\
    &= C \int\!\! \int |f(a,b)|^p a^{-tp} \int\!\! \int
    |w_{-rp-s}((a,b)^{-1}(a_1,b_1))| w_{rp}(a_1,b_1) a_1^{tp}
    \,\frac{da_1 \,db_1}{a_1^2} \,\frac{da \,db}{a^2}.  
    \intertext{A change of variable and using submultiplicativity
      of the weight $w_{rp}$ then gives}
    &= C \int\!\! \int |f(a,b)|^p a^{-tp} \int\!\! \int
    w_{-rp-s}((a_1,b_1)) w_{rp}((a,b)(a_1,b_1)) (aa_1)^{tp}
    \,\frac{da_1 \,db_1}{a_1^2} \,\frac{da \,db}{a^2} \\
    &\leq C \int\!\! \int |f(a,b)|^p w_{rp}(a,b) \,\frac{da \,db}{a^2}
    \int\!\!\int w_{-s}((a_1,b_1)) a_1^{tp}
    \,\frac{da_1 \,db_1}{a_1^2} \\
    &\leq C \| f \|_{L^p_r},
  \end{align*}
  where we in the last inequality have assumed that
  $w_{-s}(a_1,b_1)a_1^{tp}\in L^1(G)$.
  
  To sum up the map $f\mapsto f*F_s$ is continuous
  if we are able to choose a $t$
  such that both $w_{rq-s}(a,b)a^{tq}$ and $w_{-s}(a,b)a^{tp}$ are
  in $L^1(G)$. 
  This is the case if both $2-s+rq < tq < s-rq$ and
  $2-s < tp < s$ and such a $t$ can be shown to exist
  if and only if $r+2/p < s$.
  
  For $p=1$ Fubini can be applied immediately 
  and the requirement becomes that
  $w_{r-s}(a,b)$ is integrable. This is the case if
  $2+r < s$, so the result also holds for $p=1$.
\end{proof}

The key to finding atomic decompositions will be the following
result
\begin{lemma}\label{lem:osc}
  For each $\epsilon > 0$ there is a neighbourhood 
  $U$ of the identity such
  that 
  \begin{equation*}
    \Big| \frac{W_u^s(u)((a,b)(x,y))}{W_u^s(u)(x,y)} -1 \Big| <\epsilon
  \end{equation*}
  for $(a,b)\in U$.
\end{lemma}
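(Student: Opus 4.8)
The plan is to reduce the estimate to an elementary bound on the explicit kernel $W_u^s(u)(a,b) = 2^s(a+a^{-1}-ib)^{-s}$. First I would write the ratio in terms of the underlying complex variable: setting $\zeta = a+a^{-1}-ib$ (so that $|\zeta|^2 = (a+a^{-1})^2+b^2$), the quotient $W_u^s(u)((a,b)(x,y))/W_u^s(u)(x,y)$ is a ratio of $s$-th powers of affine-in-$(x,y)$ expressions. Concretely, using the group law $(a,b)(x,y) = (ax, ay+b)$, one computes that the numerator corresponds to the point $ax + (ax)^{-1} - i(ay+b) = a x + a^{-1}x^{-1} - iay - ib$, and after factoring this becomes $x$ (or $\zeta_{(x,y)}$) times a factor that tends to $1$ as $(a,b)\to(1,0)$, uniformly for $(x,y)$ in all of $G$. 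The key point is that the "bad" direction — large $x$ or large $y$ — is controlled because the perturbation enters multiplicatively after the factorization, not additively.

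The key steps, in order, are: (i) fix the explicit formula $W_u^s(u)(a,b)=2^s(a+a^{-1}-ib)^{-s}$ from the computation preceding Lemma~\ref{lem:int1}; (ii) using the group multiplication $(a,b)(x,y)=(ax,ay+b)$, express
\begin{equation*}
  \frac{W_u^s(u)((a,b)(x,y))}{W_u^s(u)(x,y)}
  = \left(\frac{ax + (ax)^{-1} - i(ay+b)}{x + x^{-1} - iy}\right)^{-s};
\end{equation*}
(iii) write the inner ratio as $1 + R(a,b,x,y)$ and show $|R(a,b,x,y)| \le C\,d((a,b),e)$ uniformly in $(x,y)$, where $d(\cdot,e)$ is (say) $|a-1| + |a^{-1}-1| + |b|$, by checking that each term of $ax+(ax)^{-1}-i(ay+b) - (x+x^{-1}-iy)$, namely $(a-1)x$, $((ax)^{-1}-x^{-1})$, $-i(a-1)y$, $-ib$, is bounded in modulus by a small constant times $|x+x^{-1}-iy|$ (here the elementary inequality $|x| \le \tfrac12|x+x^{-1}|$ for $x>0$ and $|y|\le |x+x^{-1}-iy|$ are what make the bound uniform in $(x,y)$); (iv) conclude by continuity of $w\mapsto w^{-s}$ near $w=1$: given $\epsilon>0$ there is $\delta>0$ with $|w-1|<\delta \Rightarrow |w^{-s}-1|<\epsilon$, and then choose $U = \{(a,b) : C\,d((a,b),e) < \delta\}$, which is a neighbourhood of the identity.

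The main obstacle I anticipate is step (iii): making the bound on $R$ genuinely uniform over all of $G$, including the region where $x\to 0$ or $x\to\infty$ or $|y|\to\infty$. The term $(ax)^{-1}-x^{-1} = x^{-1}(a^{-1}-1)$ is the delicate one, since $x^{-1}$ is unbounded; but it is paired against the denominator $x+x^{-1}-iy$ whose modulus is $\ge x^{-1}$, so the quotient $|x^{-1}(a^{-1}-1)| / |x+x^{-1}-iy| \le |a^{-1}-1|$ stays small. Handling all four terms this way — each bounded by a constant multiple of $d((a,b),e)$ after dividing by the denominator — is the crux; once that is done the rest is the elementary continuity of the power function. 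I would present step (iii) as the single lemma-internal estimate and relegate the four term-by-term bounds to a short displayed computation rather than grinding through each case in prose.
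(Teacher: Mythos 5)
The paper states Lemma~\ref{lem:osc} without giving a proof, so there is no argument of the authors to compare yours against; your strategy --- insert the explicit kernel $W_u^s(u)(a,b)=2^s(a+a^{-1}-ib)^{-s}$, write the quotient as a power of $1+R$, and bound $R$ uniformly in $(x,y)\in G$ --- is the natural one and it does work. One concrete correction is needed in your step (ii): in the parametrization $\begin{pmatrix} a & b\\ 0 & a^{-1}\end{pmatrix}$ the multiplication law is $(a,b)(x,y)=(ax,\;ay+bx^{-1})$, not $(ax,\;ay+b)$. Hence the numerator is $ax+(ax)^{-1}-i(ay+bx^{-1})$, and the difference from $x+x^{-1}-iy$ consists of the four terms $(a-1)x$, $x^{-1}(a^{-1}-1)$, $-i(a-1)y$, $-ibx^{-1}$. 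Fortunately your scheme absorbs the change: since $x\le x+x^{-1}\le|x+x^{-1}-iy|$, $x^{-1}\le|x+x^{-1}-iy|$ and $|y|\le|x+x^{-1}-iy|$, each term is at most $\bigl(|a-1|+|a^{-1}-1|+|b|\bigr)\,|x+x^{-1}-iy|$, so $|R|\le C\,d((a,b),e)$ uniformly in $(x,y)$ and the rest of the argument is unchanged.

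Two smaller points. First, the inequality $|x|\le\tfrac12|x+x^{-1}|$ you quote is false for $x>1$ (it is equivalent to $x\le x^{-1}$); the bound you actually need, $x\le x+x^{-1}$, is trivially true, so only the constant in your prose is off. Second, since $s>1$ need not be an integer, you should justify writing the quotient of the two $(-s)$-th powers as the $(-s)$-th power of the quotient: because $\mathrm{Re}(x+x^{-1}-iy)=x+x^{-1}\ge2>0$ for every group element, both numbers lie in the open right half-plane, their arguments lie in $(-\pi/2,\pi/2)$, and the identity $\zeta_1^{-s}/\zeta_2^{-s}=(\zeta_1/\zeta_2)^{-s}$ holds for the principal branch; then your continuity argument for $w\mapsto w^{-s}$ at $w=1$ applies verbatim. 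As an aside, the same uniformity can be seen even faster in the $\mathrm{SU}(1,1)$ coordinates used in the paper: with $W_u^s(u)(g)=\bar\alpha(g)^{-s}$ one gets $W_u^s(u)(gh)/W_u^s(u)(h)=\bigl(\bar\alpha_1+\bar\beta_1\,\beta_2/\bar\alpha_2\bigr)^{-s}$, and $|\beta_2/\bar\alpha_2|<1$ for all $h$, while $\alpha_1\to1$, $\beta_1\to0$ as $g\to e$.
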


From this result follows easily 
\begin{corollary}\label{cor:1}
  There exist a neighbourhood $U$ of the identity
  and constants $C_1,C_2 > 0$ such that
  \begin{equation*}
    C_1 |W_u^s(u)(x,y)| \leq |W_u^s(u)((a,b)(x,y))| \leq C_2 |W_u^s(u)(x,y)|
  \end{equation*}
  for all $(x,y)\in G$ with $(a,b)\in U$.
  These constants can be chosen arbitrarily close to $1$,
  by choosing $U$ small enough.
\end{corollary}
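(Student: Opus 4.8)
The plan is to obtain Corollary~\ref{cor:1} as an immediate consequence of Lemma~\ref{lem:osc} together with the reverse triangle inequality. First I would note that the explicit formula $W_u^s(u)(a,b) = 2^s(a+a^{-1}-ib)^{-s}$ shows that $W_u^s(u)$ never vanishes on $G$, so the quotient $W_u^s(u)((a,b)(x,y))/W_u^s(u)(x,y)$ appearing in Lemma~\ref{lem:osc} is well defined for every $(x,y)\in G$, and dividing an inequality by $|W_u^s(u)(x,y)|$ is legitimate.

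Next, fix any $\epsilon\in(0,1)$ and let $U$ be the neighbourhood of the identity produced by Lemma~\ref{lem:osc} for this $\epsilon$. For $(a,b)\in U$ and arbitrary $(x,y)\in G$, write $z = W_u^s(u)((a,b)(x,y))/W_u^s(u)(x,y)$, so that $|z-1|<\epsilon$. The reverse triangle inequality $\bigl||z|-1\bigr|\le |z-1|$ then gives $1-\epsilon < |z| < 1+\epsilon$. Multiplying through by $|W_u^s(u)(x,y)|$ yields $C_1|W_u^s(u)(x,y)| \le |W_u^s(u)((a,b)(x,y))| \le C_2|W_u^s(u)(x,y)|$ with $C_1 = 1-\epsilon$ and $C_2 = 1+\epsilon$; both are positive since $\epsilon<1$, and they can be taken as close to $1$ as desired by shrinking $U$ so that Lemma~\ref{lem:osc} holds for a smaller $\epsilon$.

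I do not expect a genuine obstacle here, since all the analytic content is already carried by Lemma~\ref{lem:osc}. The only two points that deserve a line of care are the non-vanishing of $W_u^s(u)$, which is immediate from the closed form above, and the passage from the complex-valued estimate of Lemma~\ref{lem:osc} to an estimate on absolute values, which is exactly the inequality $\bigl||z|-1\bigr|\le|z-1|$. Thus the proof is a short deduction rather than a computation.
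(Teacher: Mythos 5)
Your proof is correct and matches the paper, which derives Corollary~\ref{cor:1} directly from Lemma~\ref{lem:osc} in exactly this way (the paper simply states it "follows easily"). The two points you flag — non-vanishing of $W_u^s(u)$ from the closed form and the reverse triangle inequality — are precisely what makes the deduction immediate.
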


\begin{proposition}
  Let $V\subseteq U$ be compact neighbourhoods of the identitiy.
  Assume that the points $\{ x_i\}$ are $V$-separated and $U$-dense
  and that $U$ satisfies Corollary~\ref{cor:1}. Let 
  $\{ \psi_i\}$ be a partition of unity 
  for which $\supp(\psi_i)\subseteq x_iU$.
  Define the sequence space 
  \begin{equation*}
    \ell^p_r 
    = \{ (\lambda_i)\, |\, 
    \|(\lambda_i) \|_{\ell^p_r} 
    = \Big( \sum_{i\in I} |\lambda_iw_r(x_i)|^p \Big)^{1/p}
    \}.
  \end{equation*}
  
  Then the following is true
  \begin{enumerate}
  \item The mapping 
    $\ell^p_r \ni (\lambda_i) \mapsto \sum_i \lambda_i \ell_{x_i} W_u^s(u)
    \in L^p_r(G)*W_u^s(u)$ is continuous. \label{item:1a}
  \item The mapping 
    $L^p_r(G)*W_u^s(u) \ni f \mapsto (f(x_i))_{i\in I} \in \ell^p_r(I)$
    is continuous. \label{item:2a}
  \item The mapping 
    $L^p_r(G)*W_u^s(u) \ni f\mapsto (\int_G f(x)\psi_i(x) dx)_{i\in I} \in
    \ell^p_r(I)$
    is continuous. \label{item:3a}
\end{enumerate}
\end{proposition}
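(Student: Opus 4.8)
The three statements are all continuity estimates between $L^p_r(G)*W_u^s(u)$ (or the sequence space $\ell^p_r$) and each other, and the strategy is to exploit the local control of $W_u^s(u)$ from Corollary~\ref{cor:1} together with the convolution estimates from Lemma~\ref{lem:bergman2}. Throughout I would abbreviate $\Phi = W_u^s(u)$, so $\Phi$ is bounded, continuous, and satisfies $|\Phi((a,b)x)| \asymp |\Phi(x)|$ uniformly for $(a,b)\in U$, and I would freely use that $f = f*\Phi$ for $f$ in the space under consideration.

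\textbf{Part (\ref{item:1a}).} The plan is to write $\sum_i \lambda_i \ell_{x_i}\Phi = (\sum_i \lambda_i \ell_{x_i} 1_U / |U| ) * \Phi + (\text{error})$, or more directly to estimate $\|\sum_i \lambda_i \ell_{x_i}\Phi\|_{L^p_r}$ by comparing each $\ell_{x_i}\Phi(y) = \Phi(x_i^{-1}y)$ with the average of $\Phi(x^{-1}y)$ over $x\in x_iV$. Using $V$-separatedness of $\{x_i\}$ the sets $x_iV$ are disjoint, so one can build a function $F = \sum_i \lambda_i c_i 1_{x_iV}$ (with $c_i$ a normalizing constant $\asymp 1$) whose $L^p_r$-norm is comparable to $\|(\lambda_i)\|_{\ell^p_r}$ by the submultiplicativity of $w_r$ and the boundedness of $w_r$ on the compact set $V$. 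Then $F*|\Phi|$ dominates $|\sum_i \lambda_i \ell_{x_i}\Phi|$ pointwise up to a constant by Corollary~\ref{cor:1}, and Lemma~\ref{lem:bergman2} (the continuity of $f\mapsto f*|\Phi|$ on $L^p_r$, valid since $s > r + 2/p$) finishes the estimate. That the result lies in $L^p_r(G)*\Phi$ follows because $\ell_{x_i}\Phi = \ell_{x_i}(\Phi * \Phi) = (\ell_{x_i}\Phi)*\Phi$ by left-invariance of the reproducing identity (R\ref{r1}) extended as in Theorem~\ref{mainthm}, so each summand, hence the limit, is in the closed subspace $L^p_r*\Phi = (L^p_r)_u$.

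\textbf{Parts (\ref{item:2a}) and (\ref{item:3a}).} For (\ref{item:2a}) I would use the reproducing formula to write, for $f = f*\Phi \in L^p_r*\Phi$ and any $x_i$,
\begin{equation*}
  |f(x_i)| = \Big| \int_G f(x)\Phi(x^{-1}x_i)\,\frac{da\,db}{a^2} \Big|,
\end{equation*}
and then split the domain: on a point $x_i$, the contribution is controlled by $\sup_{x\in x_iU}|f(x)|\cdot\|\Phi\|_1$-type bounds. More efficiently, one estimates $|f(x_i)|$ by $C\,(\ell_{x_i^{-1}}K_U(f))$-style oscillation, but since the paper avoids Wiener amalgam machinery, I would instead observe $|f(x_i)|^p w_r(x_i)^p \le C\,|U|^{-1}\int_{x_iU}|f(x)|^p w_r(x)^p\,\frac{da\,db}{a^2}$ is \emph{not} immediate — so instead one writes $|f(x_i)| \le C\int_G |f*\Phi|$ locally and uses Corollary~\ref{cor:1} to replace $\Phi(x^{-1}x_i)$ by $\Phi(x^{-1}y)$ for $y$ near $x_i$, giving $|f(x_i)| \le C\,\inf_{y\in x_iV}|f|*|\Phi|(y)$ up to the oscillation constant; averaging the $p$-th power over $x_iV$ and summing (using disjointness and $U$-density so that $\sum_i 1_{x_iV} \asymp 1$) yields $\|(f(x_i))\|_{\ell^p_r} \le C\|\,|f|*|\Phi|\,\|_{L^p_r} \le C\|f\|_{L^p_r}$ by Lemma~\ref{lem:bergman2} again. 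Part (\ref{item:3a}) is then a consequence of (\ref{item:2a}): since $\int_G f(x)\psi_i(x)\,dx$ is an average of values $f(x)$ over $x\in x_iU$ against a probability density $\psi_i$, one has $|\int f\psi_i| \le \sup_{x\in x_iU}|f(x)|$, and the same oscillation trick (Corollary~\ref{cor:1} applied to bound $\sup_{x_iU}|f|$ by $C\inf_{x_iV}|f|*|\Phi|$, using $f = f*\Phi$) reduces it to the already-proven $L^p_r$-boundedness of $f\mapsto |f|*|\Phi|$.

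\textbf{Main obstacle.} The delicate point is making the "averaging/oscillation" step rigorous without invoking Wiener amalgam spaces: one needs that for $f = f*\Phi$, the pointwise values $|f(x)|$ on a small neighbourhood $x_iU$ are all comparable to a single convolution $|f|*|\Phi|$ evaluated near $x_i$, with a constant that $\to 1$ as $U$ shrinks. This is exactly where Lemma~\ref{lem:osc} and Corollary~\ref{cor:1} do the work: $|f(x)| = |\int f(z)\Phi(z^{-1}x)\,dz| \le \int |f(z)|\,|\Phi(z^{-1}x)|\,dz \le C_2\int|f(z)|\,|\Phi(z^{-1}y)|\,dz = C_2\,(|f|*|\Phi|)(y)$ for $x,y$ in a common small neighbourhood, and the reverse comparison of $\inf$ and $\sup$ over $x_iV \subseteq x_iU$ is handled the same way. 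Once this uniform comparison is in hand, all three parts collapse to Lemma~\ref{lem:bergman2} plus elementary estimates on $w_r$ over compact sets and the disjointness/covering properties of $\{x_iV\}$ and $\{x_iU\}$.
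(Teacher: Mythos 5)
Your argument is correct, and for parts (\ref{item:1a}) and (\ref{item:2a}) it is essentially the paper's proof: the same comparison of $\|(\lambda_i)\|_{\ell^p_r}$ with $\|\sum_i\lambda_i 1_{x_iV}\|_{L^p_r}$, the pointwise domination $|\sum_i\lambda_i\ell_{x_i}W_u^s(u)|\leq C\,(\sum_i|\lambda_i|1_{x_iV})*|W_u^s(u)|$, the bound $|f(x_i)|\leq C_2\,|f|*|W_u^s(u)|(y)$ for $y\in x_iV$ (the paper phrases this as $\sum_i|f(x_i)|1_{x_iV}\leq C_2\,|f|*|W_u^s(u)|$ using $V$-separation), and then Lemma~\ref{lem:bergman2}. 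For part (\ref{item:3a}) you take a genuinely different route: the paper never invokes the reproducing formula or the oscillation estimate there; it simply bounds $\sum_i\psi_i(x)1_{x_iV}(y)\leq 1_{U^{-1}V}(x^{-1}y)$ and controls everything by $|f|*1_{U^{-1}V}$, which is bounded on $L^p_r$ since the kernel is a compactly supported indicator --- an argument that works for every $f\in L^p_r$ and needs no constants from Corollary~\ref{cor:1}. Your reduction of (\ref{item:3a}) to the machinery of (\ref{item:2a}) also works, but it uses $f=f*W_u^s(u)$ and a comparability constant for the larger compact set $V^{-1}U$ (obtainable by iterating Corollary~\ref{cor:1}, or directly for this explicit kernel), so it is slightly heavier than necessary. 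Two small slips that do not affect the proof: $\sum_i 1_{x_iV}\asymp 1$ is false in general (disjointness gives only the upper bound, and only the norm equivalence with $\ell^p_r$ is needed, which follows from disjointness and the comparability of $w_r$ on translates of $V$); and $\psi_i$ is not a probability density, only $\int\psi_i\leq|U|$, which costs a harmless constant. Note also that both you and the paper apply Corollary~\ref{cor:1}, stated for left multiplication of the argument, to a right-multiplication comparison $|W_u^s(u)(z^{-1}yv^{-1})|\leq C_2|W_u^s(u)(z^{-1}y)|$; this is justified for the explicit kernel $2^s(a+a^{-1}-ib)^{-s}$, and the paper is equally informal on this point.
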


As in \cite{Grochenig1991} sums are understood as 
limits of the net of 
partial sums over finite subsets with convergence in
$L^p_r(G)$.

\begin{proof}
  First note that the norms on $\ell^p_r$ and $L^p_r(G)$ are related
  in the following sense (where $C$ and $D$ are constants)
  \begin{equation*}
    \| (\lambda_i) \|_{\ell^p_r} 
    \leq C \Big\| \sum_i \lambda_i 1_{x_iV} \Big\|_{L^p_r}
    \leq D \| (\lambda_i) \|_{\ell^p_r}.
  \end{equation*}
  The convolution in $L^p_r(G)$ with $|W_u^s(u)|$ is continuous (see
  Lemma~\ref{lem:bergman2}) and we will
  denote the norm of this convolution by $D_p$.
  Further assume that we have chosen $U$ and constants
  $C_1,C_2$ satisfying Lemma~\ref{lem:osc} and Corollary~\ref{cor:1}.
  (\ref{item:1a})
  If $(\lambda_i)\in \ell^p_r$ then the function
  \begin{equation*}
    f = \sum_i |\lambda_i| 1_{x_iV}
  \end{equation*}
  is in $L^p_r(G)$ and 
  $\| f\|_{L^p_r(G)} \geq C \|(\lambda_i)\|_{\ell^p_r}$.
  Convolution with $|W_u^s(u)|$ is continuous
  so 
  \begin{equation*}
    f*|W_u^s(u)| 
    = \sum_i |\lambda_i| 1_{x_iV}*|W_u^s(u)| 
  \end{equation*}
  is in $L^p_r(G)$.
  Now let us show that the function $1_{x_iV}*|W_u^s(u)|$ is bigger
  than some constant times $\ell_{x_i}|W_u^s(u)|$.
  \begin{equation*}
    \int 1_{x_iV}(z) |W_u^s(u)(z^{-1}y)| dz 
    = \int_V |W_u^s(u)(z^{-1}x_i^{-1} y)| dz
    \geq C |W_u^s(u)(x_i^{-1}y)|.
  \end{equation*}
  This shows that
  \begin{align*}
    \Big| \sum_i \lambda_i \ell_{x_i} W_u^s(u)(y)   \Big|
    &\leq     \sum_i |\lambda_i| |W_u^s(u)(x_i^{-1}y)| \\
    &\leq C   \sum_i |\lambda_i| 1_{x_iV}*|W_u^s(u)|(y) \\
    &=  C f*|W_u^s(u)|.
  \end{align*}
  Since $f*|W_u^s(u)|\in L^p_r(G)$ the sum $\sum_i \lambda_i \ell_{x_i} W_u^s(u)$ 
  is in $L^p_r(G)$ with norm
  \begin{align*}
    \Big\| \sum_i \lambda_i \ell_{x_i} W_u^s(u)   \Big\|_{L_r^p}
    & \leq C \|f*|W_u^s(u)|\, \|_{L^p_r} \\
    & \leq C \| f\|_{L^p_r}  \\
    &\leq C \| (\lambda_i)\|_{\ell^p_r}.
  \end{align*}
  We have thus obtained the desired continuity. 
  The sum
  $\sum_i \lambda_i\ell_{x_i}W_u^s(u)$ is to be understood as a limit
  in $L^p_r(G)$ and since convolution with $W_u^s(u)$ is continuous
  we get from the reproducing formula that 
  $\sum_i \lambda_i\ell_{x_i}W_u^s(u)\in L^p_r(G)*W_u^s(u)$.

  (\ref{item:2a})
  We need to show that $\{ f(x_i)\}$ is in $\ell^p_r$, but this is the same
  as showing that $g = \sum_i |f(x_i)| 1_{x_iV}$ is in $L^p_r(G)$.
  But $f\in L^p_r(G)*W_u^s(u)$ so 
  \begin{equation*}
    \sum_i |f(x_i)|1_{x_iV}(y) 
    \leq
    \sum_i |f|*|W_u^s(u)|(x_i) 1_{x_iV}(y)
    = \int |f(z)| \sum_i |W_u^s(u)(z^{-1}x_i)| 1_{x_iV}(y) dz.
  \end{equation*}
  For each $y$ at most one $i$ adds to this sum, namely the $i$ for which
  $x_i \in yV^{-1}$. Therefore
  \begin{equation*}
      \sum_i |W_u^s(u)(z^{-1}x_i)| 1_{x_iV}(y)
      \leq \sup_{v\in V} |W_u^s(u)(z^{-1}yv^{-1})|
      \leq C_2 |W_u^s(u)(z^{-1}y)|
  \end{equation*}
  by Corollary~\ref{cor:1}. We then get
  \begin{equation*}
    \sum_i |f(x_i)|1_{x_iV}(y) 
    \leq C_2 \int |f(z)| |W_u^s(u)(z^{-1}y)| dz
    = C_2 |f|*|W_u^s(u)|(y)
  \end{equation*}
  and finally 
  \begin{equation*}
    \|f(x_i) \|_{\ell^p_r} \leq \frac{C_2D_p}{|V|} \| f\|_{L^p_r}.
  \end{equation*}
  
  (\ref{item:3a})
  We have to show that the function
  \begin{equation*}
    \sum_{i} \Big(\int f(x)\psi_i(x)dx \Big) 1_{x_iV} 
  \end{equation*}
  is in $L^p_r(G)$.
  We get that
  \begin{equation*}
    \Big| 
    \sum_{i} \Big(\int f(x)\psi_i(x)dx \Big) 1_{x_iV}(y)
    \Big|
    \leq  \int |f(x)| \sum_{i\in I} \psi_i(x) 1_{x_iV}(y) dx
  \end{equation*}
  and since 
  \begin{equation*}
    \sum_{i} \psi_i(x)1_{x_iV}(y) 
    \leq \sum_{i} 1_{x_iU}(x)1_{x_iV}(y)
    \leq 1_{U^{-1}V}(x^{-1}y)
  \end{equation*}
  we obtain
  \begin{equation*}
    \Big| 
    \sum_{i} \Big(\int f(x)\psi_i(x)dx \Big) 1_{x_iV}(y)
    \Big|
    \leq
    \int |f(x)|1_{U^{-1}V}(x^{-1}y) dx
    = |f|*1_{U^{-1}V}(y).
  \end{equation*}
  The last function 
  is in $L^p_r(G)$ with norm continuously dependent on $f$, i.e.
  \begin{equation*}
    \Big\| \sum_{i} \Big(\int f(x)\psi_i(x)dx \Big) 1_{x_iV}
    \Big\|
    \leq C \| f\|_{L^p_r}
  \end{equation*}
  for some $C>0$.
\end{proof}

\begin{proposition}
  We can choose a compact neighbourhood $U$, 
  $U$-dense points $\{ x_i \}$ and a partition
  $\psi_i$ of unity with $\supp(\psi_i)\subseteq x_iU$
  such that the operators 
  $T_1,T_2,T_3: L^p_r(G)*W_u^s(u) \to L^p_r(G)*W_u^s(u)$ 
  defined below are invertible with
  continuous inverses
  \begin{enumerate}
  \item $T_1f = \sum_i f(x_i)\psi_i*W_u^s(u)$
    \label{item:4a}
  \item $T_2f = \sum_i c_if(x_i)\ell_{x_i}W_u^s(u)$
    with $c_i = \int \psi_i$)
    \label{item:5a}
  \item $T_3f = \sum_i \Big( \int f(x)\psi_i(x)\, dx\Big)\, \ell_{x_i}W_u^s(u)$
    \label{item:6a}
  \end{enumerate}
\end{proposition}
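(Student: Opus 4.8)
The strategy is the standard perturbation argument: I will show that, once $U$ is chosen fine enough, each $T_j$ differs from the identity on the reproducing kernel space $\mathcal B:=L^p_r(G)*W_u^s(u)$ by an operator of $\mathcal B\to\mathcal B$ norm strictly less than $1$; then $T_j=\mathrm{Id}-(\mathrm{Id}-T_j)$ is invertible with continuous inverse given by the Neumann series. For the data, I fix for each small $U$ a relatively compact symmetric $V\subseteq U$, a maximal $V$-separated family $\{x_i\}$ (which is then $U$-dense) and a partition of unity $\{\psi_i\}$ with $0\le\psi_i\le1$, $\sum_i\psi_i\equiv1$ and $\supp\psi_i\subseteq x_iU$; for such data the preceding Proposition and Corollary~\ref{cor:1} apply, and writing $g=W_u^s(u)$ one checks that $T_1,T_2,T_3$ map $\mathcal B$ into $\mathcal B$ because the defining series converge in $L^p_r(G)$ by Lemma~\ref{lem:bergman2}, part~(\ref{item:1a}) of the preceding Proposition and the equivalence $\|(\lambda_i)\|_{\ell^p_r}\asymp\|\sum_i\lambda_i1_{x_iV}\|_{L^p_r}$, while $\ell_{x_i}g=(\ell_{x_i}g)*g$ and $\mathcal B$ is closed.

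The technical core is a uniform oscillation estimate for elements of $\mathcal B$: for a compact symmetric neighbourhood $K$ of $e$ and $f\in\mathcal B$,
\begin{equation*}
  \Big\|\sup_{v\in K}|\rho_v f-f|\Big\|_{L^p_r}\le\delta(K)\,\|f\|_{L^p_r},
  \qquad \rho_v f(y)=f(yv),\quad \delta(K)\to 0\ \text{as}\ K\to\{e\}.
\end{equation*}
To get this I use $f=f*g$ and the substitution $z=yt$ to write $\rho_v f(y)-f(y)=\int f(yt)\,[\,g(t^{-1}v)-g(t^{-1})\,]\,dt$, bound $|g(t^{-1}v)-g(t^{-1})|\le\epsilon_K\,|g(t^{-1})|$ uniformly in $t$ for $v\in K$ by the right-translation analogue of Lemma~\ref{lem:osc} (which holds for $W_u^s(u)(a,b)=2^s(a+a^{-1}-ib)^{-s}$ by the same explicit computation), so that $\sup_{v\in K}|\rho_v f-f|\le\epsilon_K\,(|f|*|W_u^s(u)|)$, and finish with $\||f|*|W_u^s(u)|\|_{L^p_r}\le C\|f\|_{L^p_r}$ from Lemma~\ref{lem:bergman2}; here $\epsilon_K\to 0$ as $K\to\{e\}$.

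With these in place the three assertions follow by the same bookkeeping. For $T_3$: from $\sum_i\psi_i=1$ and $f=f*g$, $(f-T_3 f)(y)=\sum_i\int\psi_i(z)f(z)\,[\,g(z^{-1}y)-g(x_i^{-1}y)\,]\,dz$, and on $\supp\psi_i$ one has $z=x_iv$ with $v\in U$ so $z^{-1}y=v^{-1}(x_i^{-1}y)$, whence Lemma~\ref{lem:osc} gives $|g(z^{-1}y)-g(x_i^{-1}y)|\le\epsilon\,|g(x_i^{-1}y)|$; thus $|f-T_3 f|(y)\le\epsilon\sum_i\big(\int\psi_i|f|\big)|\ell_{x_i}g(y)|$ and $\|f-T_3 f\|_{L^p_r}\le C\epsilon\,\|f\|_{L^p_r}$ by the estimates in the proofs of parts~(\ref{item:1a}) and~(\ref{item:3a}). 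For $T_1$: $(f-T_1 f)=h*g$ with $h(z)=\sum_i\psi_i(z)[f(z)-f(x_i)]$; since $z\in x_iU$ forces $x_i\in zU^{-1}$, $|h(z)|\le\sup_{w\in U^{-1}}|f(zw)-f(z)|$, so the oscillation estimate and Lemma~\ref{lem:bergman2} give $\|f-T_1 f\|_{L^p_r}\le C\,\delta(U\cup U^{-1})\,\|f\|_{L^p_r}$. For $T_2$: writing $f-T_2 f=(f-T_3 f)+(T_3 f-T_2 f)$ with $T_3 f-T_2 f=\sum_i\lambda_i\ell_{x_i}g$ and $\lambda_i=\int\psi_i(z)[f(z)-f(x_i)]\,dz$, one has $|\lambda_i|\le\int\psi_i(z)\sup_{w\in U^{-1}}|f(zw)-f(z)|\,dz$, hence $\|(\lambda_i)\|_{\ell^p_r}\le C\,\delta(U\cup U^{-1})\,\|f\|_{L^p_r}$ by the (\ref{item:3a})-type inequality and the oscillation estimate, and then $\|T_3 f-T_2 f\|_{L^p_r}\le C\,\delta(U\cup U^{-1})\,\|f\|_{L^p_r}$ by part~(\ref{item:1a}). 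In every case, choosing $U$ small enough that the constant in front of $\|f\|_{L^p_r}$ is $<1$ makes $\mathrm{Id}-T_j$ a strict contraction on $\mathcal B$, so $T_j$ is boundedly invertible. I expect the only genuinely new work — and the main obstacle — to be the oscillation estimate, specifically the uniform right-translation oscillation bound for $W_u^s(u)$; everything after that is routine manipulation with the partition of unity and the continuity results already established.
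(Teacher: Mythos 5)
Your proposal is correct and follows essentially the same route as the paper: a Neumann-series perturbation argument on $L^p_r(G)*W_u^s(u)$, combining the oscillation estimate of Lemma~\ref{lem:osc} (your right-translation version is equivalent to it, since $W_u^s(u)(x^{-1})=\overline{W_u^s(u)(x)}$) with the convolution bound of Lemma~\ref{lem:bergman2} to force $\|I-T_j\|<1$ for $U$ small. The paper writes this out only for $T_1$ and handles $T_2,T_3$ by comparison, so your direct estimates for $f-T_3f$ and $T_3f-T_2f$ supply exactly the omitted details; the only point to watch is that the constants you import from parts (a) and (c) of the preceding proposition depend on $U$ and $V$, so either fix $V$ and $U$ of comparable size (e.g. $V^2\subseteq U$) or dominate $\sum_i\bigl(\int\psi_i|f|\bigr)\,|\ell_{x_i}W_u^s(u)|$ directly by $C\,|f|*|W_u^s(u)|$ via Corollary~\ref{cor:1} to keep the final bound uniform in $U$.
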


\begin{proof}
  For each neighbourhood of the identity  
  $U$ we can pick $U$-dense points $\{ x_i\}$
  such that $\{ x_i \}$ are $V$-separated for some
  compact neighbourhood of the identity 
  $V$ satisfying $V^2\subseteq U$ (see \cite[Thm 4.2.2]{Rauhut2005}).
  Thus we can pick $U$ in order to satisfy the inequality
  in Lemma~\ref{lem:osc} for any $\epsilon$.

  Denote by $D_p$ the $L^p_r$ operator norm of convolution by
  $W_u^s(u)$.

  (\ref{item:4a})
  Let $f\in L^p_r(G)*W_u^s(u)$ and investigate the difference
  \begin{equation*}
    f(x) - \sum_i f(x_i)\psi_i(x) = \sum_i (f(x)-f(x_i))\psi_i
  \end{equation*}
  For $x\in \supp(\psi_i) \subseteq x_i U$ we get
  \begin{align*}
    |f(x)-f(x_i)|
    &\leq \int |f(z)| |W_u^s(u)(z^{-1}x) - W_u^s(u)(z^{-1}x_i)| dz \\
    &= \int |f(z)|  
    \Big| \frac{W_u^s(u)(z^{-1}x_i)}{ W_u^s(u)(z^{-1}x)} -1 \Big|
    |W_u^s(u)(z^{-1}x)| dz
    &\leq \epsilon \int |f(z)| |W_u^s(u)(z^{-1}x)| dz \\
    &= \epsilon |f|*|W_u^s(u)|(x).
  \end{align*}
  This means that
  \begin{equation*}
    |f(x) - \sum_i f(x_i)\psi_i(x)|
    \leq 
    \epsilon \sum_i |f|*|W_u^s(u)|(x) \psi_i(x)
    = \epsilon |f|*|W_u^s(u)|(x).
  \end{equation*}
  This function is in $L^p_r(G)$ and
  \begin{equation*}
    \Big \| f - \sum_i f(x_i)\psi_i \Big\|_{L^p_r}
    \leq \epsilon D_p \|f \|_{L^p_r}.
  \end{equation*}
  Convoluting this expression by $W_u^s(u)$ we get
  \begin{equation*}
    \Big \| f - \sum_i f(x_i)\psi_i*W_u^s(u) \Big\|_{L^p}
    \leq \epsilon D_p^2 \|f \|_{L^p_r}.
  \end{equation*}
  Letting $U$ be such that $\epsilon < D_p^{-2}$, we obtain 
  an operator $T_1$ such that 
  $\| I-T_1\| < 1$ as an operator on $L^p_r(G)*W_u^s(u)$.
  Therefore $T_1$ is invertible.

  (\ref{item:5a}) and (\ref{item:6a})
  $T_2$ and $T_3$ can be shown to be invertible by
  calculating their difference from
  the operator $T_1$. 
\end{proof}

\begin{remark}
  Note that we have avoided the use of integrability in the 
  calculations above. This allows us to treat the cases $1<s\leq 2$ which
  could not be treated in \cite{Feichtinger1988}. 
\end{remark}

\section{A Wavelet Characterization of 
  Besov spaces on the forward light cone}

\renewcommand{\det}{\mathrm{Det}}

The classical wavelet transform is related to the
group $\mathbb{R}_+\ltimes\mathbb{R}$ and the 
representation 
$\pi(a,b)f(x) = \frac{1}{\sqrt{a}}f(a^{-1}(x-b))$.
In the present section we replace $\mathbb{R}_+$ with the 
group $\mathbb{R}_+\mathrm{SO}_0(n-1,1)$ acting transitively on the 
forward light cone in $\mathbb{R}^n$. This leads to the
construction of wavelets and coorbits for the group
$\mathrm{SO}_0(n-1,1)\ltimes \mathbb{R}^n$.
We show that the constructed coorbits correspond to
Besov spaces for the forward light cone introduced
in \cite{Bekolle2004}. The representations involved are integrable
and thus the theory of Feichtinger and Gr\"ochenig is sufficient. 
However we find the construction interesting enough to be included here.

\subsection{Wavelets and coorbits on the forward light cone}
We review the wavelet transform already studied in \cite{Bernier1996} 
and \cite{Fabec2003}, and introduce coorbit spaces related to
the forward light cone. 

Let $B(x,y)$ be the bilinear form on $\mathbb{R}^n$ given by
$$
B(x,y) = x_ny_n-x_{n-1}y_{n-1}-\dots-x_1y_1
$$
and let
$\mathrm{SO}_0(n-1,1)$ be the closed connected subgroup of 
$\mathrm{GL}(n,\mathbb{R})$
which leaves $B$ invariant.
The group $\mathrm{SO}_0(n-1,1)$ has the
Iwasawa decomposition $ANK$
\begin{align*}
  A &= \left\{ a_t =
  \begin{pmatrix}
    \cosh t & 0 & \sinh t \\ 0 & I_{n-2} & 0 \\ \sinh t & 0 & \cosh t
  \end{pmatrix}
  \Big| t\in\mathbb{R} \right\}\\
  N &= \left\{ n_c =
  \begin{pmatrix}
    1-|c|^2/2 & -c^{T} & |c|^2/2 \\ 
    c & I_{n-2} & -c \\ -|c|^2/2 & -c^T & 1+|c|^2/2
  \end{pmatrix}
  \Big| c\in\mathbb{R}^{n-2} \right\} \\
  K &= \left\{ k_\sigma =
  \begin{pmatrix}
    \sigma & 0 \\ 0 & 1
  \end{pmatrix}
  \Big| \sigma \in \mathrm{SO}(n-1) \right\}
\end{align*}
where $c^T$ means the transpose of $c$.

The forward light cone is the subset $\Lambda$ of $\mathbb{R}^n$
satisfying 
\begin{equation*}
  \Lambda= \{(x_1,\dots,x_n)\, |\, B(x,x) > 0,
  x_n> 0 \}
\end{equation*}
with determinant given by
\begin{equation*}
  \det(x) = \sqrt{B(x,x)}.
\end{equation*}

An element 
$\gamma a_t n_c k_\sigma\in \mathbb{R}_+\mathrm{SO}_0(n-1,1)$
acts from the left on $x\in\Lambda$ by matrix multiplication, i.e.
$\gamma a_t n_c k_\sigma x$. This action is transitive on $\Lambda$
and the measure $\det(x)^{-n}\, dx$, 
where $dx$ is the Lebesgue measure
on $\mathbb{R}^n$, is $\mathbb{R}_+\mathrm{SO}_0(n-1,1)$-invariant.
The left-regular representation of $\mathbb{R}_+\mathrm{SO}_0(n-1,1)$ 
on $L^2(\Lambda)$ is
\begin{equation*}
   \ell(\gamma a_t n_c k_\sigma) f(x)
   = f((\gamma a_t n_c k_\sigma)^{-1}x).
\end{equation*}
The subgroup $K$ leaves the base point $e=(0,\dots,0,1)^T$ invariant
and therefore the group $H=\mathbb{R}_+AN$ acts simply transitively
on the forward light cone, i.e.
every $x\in \Lambda$ can be written
$x=\gamma a_tn_c e$. 
In particular if
\begin{equation*}
  \gamma a_tn_c e =
  \gamma \begin{pmatrix}
    \sinh t + e^t|c|^2/2 \\
    -c \\
    \cosh t +e^t|c|^2/2
  \end{pmatrix}
  =
  \begin{pmatrix}
    x_1 \\
    \vdots\\
    x_n
  \end{pmatrix}
\end{equation*}
then $\gamma$,$t$ and $c$ are determined uniquely by
\begin{equation*}
  \gamma = \det(x),\,
  c = -\gamma^{-1} ( x_2,\dots, x_{n-1})^T,\,
  t = -\ln (\gamma^{-1}(x_n-x_1)),
\end{equation*}

The left invariant measure on $H$ is given by
\begin{equation*}
  \int_H f(h) dh 
  = \int_H f(\gamma a_tn_c) \,\frac{d\gamma\,dc\,dt}{\gamma}
\end{equation*}
where $dt$,$dc$ and $d\gamma$ are the Lebesgue measures
on $\mathbb{R}$,$\mathbb{R}^{n-2}$ and $\mathbb{R}_+$ respectively.
We can pass from an integral over the cone
to an integral over the group by
\begin{equation*}
  \int_{\Lambda} f(x) \frac{dx}{\det(x)^{n}} 
  =  \int_H f(\gamma a_tn_c e) \,\frac{d\gamma\,dc\,dt}{\gamma}.
\end{equation*}
An integral over the light cone with respect to Lebesgue 
mesure can therefore be writen as an integral over the group
in the following way
\begin{equation*}
  \int_{\Lambda} f(x) \,dx
  = \int_{H} f(\gamma a_tn_c e) \gamma^{n-1}\,d\gamma\, dc\, dt.
\end{equation*}
The right Haar measure on $H$ is
given by
\begin{equation*}
  f\mapsto \int_{\mathbb{R}_+\times\mathbb{R}\times \mathbb{R}^{n-2}} 
  f(\gamma a_t n_c) e^{(n-2)t} \frac{d\gamma\,dt\, dc}{\gamma}.
\end{equation*}
The modular function on $H$ is then 
$\Delta(\lambda a_t n_c) = e^{(n-2)t}$ satisfying
\begin{equation*}
  \int_H f(\gamma a_t n_c \lambda a_{t_1} n_{c_1}) \frac{d\gamma\,dt\, dc}{\gamma}
  = \Delta(\lambda a_{t_1} n_{c_1})  
  \int_H f(\gamma a_t n_c) \frac{d\gamma\,dt\, dc}{\gamma}
\end{equation*}
and
\begin{equation*}
  \int_H f((\gamma a_t n_c)^{-1} ) \frac{d\gamma\,dt\, dc}{\gamma}
  = \int_H f(\gamma a_t n_c ) \Delta(\gamma a_t n_c)^{-1}
  \frac{d\gamma\,dt\, dc}{\gamma}.
\end{equation*}

Introduce the Fourier transform related
to the bilinear form $B$ by letting
\begin{equation*}
  \widetilde{f}(w) = \widetilde{\mathcal{F}}(f)(w)
  = \frac{1}{\sqrt{2\pi}^n}\int_{\mathbb{R}^n} f(x)e^{-iB(x,w)}\, dx
\end{equation*}
for $f\in L^1(\mathbb{R}^n)$.
We know that $\widetilde{\mathcal{F}}$ extends to a 
unitary operator on $L^2(\mathbb{R}^n)$ 
and is a topological isomorphism from $\mathcal{S}(\mathbb{R}^n)$
onto $\mathcal{S}(\mathbb{R}^n)$.
It acts on convolutions like the usual Fourier transform
\begin{equation*}
  \widetilde{f*g}(w) = \sqrt{2\pi}^n\widetilde{f}(w)\widetilde{g}(w).
\end{equation*}
Denote by $\mathcal{S}(\mathbb{R}^n)$ the space of rapidly decreasing
smooth functions with topology induced by the semi-norms
\begin{equation*}
  \| f\|_{k,l} 
  = \sup_{|\alpha|\leq k} \sup_{x\in\mathbb{R}^n} |D^{\alpha} f (x)|(1+|x|^2)^l.
\end{equation*}
Here $\alpha$ is a multi-index and $k,l\geq 0$ are integers.
Since 
$\widetilde{\mathcal{F}}:\mathcal{S}(\mathbb{R}^n)\to\mathcal{S}(\mathbb{R}^n)$
is a topological isomorphism, we can extend the Fourier transform to
tempered distributions in the usual way. In this text we work with 
the conjugate dual $\mathcal{S}^*(\mathbb{R}^n)$
of $\mathcal{S}(\mathbb{R}^n)$ (in order for it to
resemble an inner product) and thus we define the Fourier
transform $\widetilde{\phi}$ for $\phi\in \mathcal{S}^*(\mathbb{R}^n)$ by
\begin{equation*}
  \dup{\widetilde{\phi}}{\widetilde{f}} = \dup{\phi}{f}
\end{equation*}

The group $G=H\ltimes \mathbb{R}^n$ 
has a natural representation on 
\begin{equation*}
  L^2_\Lambda = \{ f\in L^2(\mathbb{R}^n) |
  \supp(\widetilde{f})\subseteq \Lambda \}
\end{equation*}
given by
\begin{equation*}
  \pi(\gamma a_tn_c,b) f(x)
  = \frac{1}{\gamma^{n/2}}f((\gamma a_tn_c)^{-1}(x-b)).
\end{equation*}
This generalizes the quasi-regular representation
of the group $\mathbb{R}_+\ltimes\mathbb{R}$ from the classical
wavelet transform.
In the Fourier domain this representation becomes
\begin{equation*}
  \widetilde{\pi}(\gamma a_tn_c,b) \widetilde{f}(w)
  = {\gamma^{n/2}}
  \widetilde{f}(\gamma (a_tn_c)^{-1}w))e^{-iB(b,w)}
\end{equation*}
and we recognize that it
arises from the left action of $H$ on 
the cone $\Lambda$, and that $\widetilde{\mathcal{F}}$ is 
an intertwining operator.
The group $G$ has left invariant measure given by
\begin{equation*}
  \int_G f(g) dg = \int f(\gamma a_tn_c,b) 
  \,\frac{d\gamma\,dc\,dt\,db}{\gamma^{n+1}}.
\end{equation*}

The following result has a generalization to symmetric cones
(see for example {\cite{Fabec2003} and \cite{Bernier1996}})
and ensures that wavelets
for this representation exist.
\begin{theorem}
  The representation $(\pi,L^2_\Lambda)$ is square-integrable.
\end{theorem}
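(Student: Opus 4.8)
The plan is to prove the stronger statement that, for a suitably chosen nonzero $u\in L^2_\Lambda$, the orthogonality relation
\[
  \|W_u(f)\|_{L^2(G)}^2 = c_u\,\|f\|_{L^2_\Lambda}^2,\qquad f\in L^2_\Lambda,
\]
holds with a finite positive constant $c_u$; taking $f=u$ then shows $W_u(u)\in L^2(G)$ with $u\neq 0$ and hence that $(\pi,L^2_\Lambda)$ is square-integrable. Since $\widetilde{\mathcal F}$ intertwines $\pi$ with $\widetilde\pi$, I would carry out the whole computation on the Fourier side. First choose $u$ with $\widetilde u\in C_c^\infty(\mathrm{int}\,\Lambda)$ nonzero, so that $u\in L^2_\Lambda\setminus\{0\}$. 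Using the explicit formula for $\widetilde\pi$ one gets, for $h=\gamma a_tn_c\in H$ and $b\in\mathbb{R}^n$,
\[
  W_u(f)(\gamma a_tn_c,b)
  = \gamma^{n/2}\int_{\mathbb{R}^n}\widetilde f(w)\,\overline{\widetilde u\big(\gamma(a_tn_c)^{-1}w\big)}\,e^{iB(b,w)}\,dw,
\]
so that for each fixed $(\gamma,t,c)$ the map $b\mapsto W_u(f)(\gamma a_tn_c,b)$ is $\gamma^{n/2}(2\pi)^{n/2}$ times the inverse $B$-Fourier transform of $w\mapsto\widetilde f(w)\overline{\widetilde u(\gamma(a_tn_c)^{-1}w)}$, which lies in $L^1\cap L^2(\mathbb{R}^n)$ because $\widetilde u$ is bounded with compact support and $\widetilde f\in L^2$.

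The next step is to apply the Plancherel theorem for $\widetilde{\mathcal F}$ in the variable $b$, which gives
\[
  \int_{\mathbb{R}^n}\big|W_u(f)(\gamma a_tn_c,b)\big|^2\,db
  = (2\pi)^n\gamma^n\int_\Lambda|\widetilde f(w)|^2\,\big|\widetilde u\big(\gamma(a_tn_c)^{-1}w\big)\big|^2\,dw.
\]
Integrating this against the remaining factor $\frac{d\gamma\,dc\,dt}{\gamma^{n+1}}$ of the left Haar measure of $G$, the powers of $\gamma$ combine to $\gamma^{-1}$, and interchanging the order of integration (legitimate by Tonelli, all integrands being nonnegative) yields
\[
  \|W_u(f)\|_{L^2(G)}^2 = (2\pi)^n\int_\Lambda|\widetilde f(w)|^2\,J(w)\,dw,
\]
where
\[
  J(w) = \int_{\mathbb{R}_+}\int_{\mathbb{R}^{n-2}}\int_{\mathbb{R}}
  \big|\widetilde u\big(\gamma(a_tn_c)^{-1}w\big)\big|^2\,\frac{d\gamma\,dc\,dt}{\gamma}.
\]
The heart of the proof is then to show that $J(w)$ is a finite constant independent of $w\in\Lambda$.

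To do this I would observe that the parametrization $(\gamma,t,c)\mapsto h=\gamma(a_tn_c)^{-1}$ of $H$ pushes the measure $\frac{d\gamma\,dc\,dt}{\gamma}$ forward to the \emph{right} Haar measure $d_rh$: indeed $(\gamma,t,c)\mapsto\gamma a_tn_c$ pushes it forward to the left Haar measure $d_lh$ (as recorded in the excerpt), the substitution $\gamma\mapsto\gamma^{-1}$ leaves $\frac{d\gamma\,dc\,dt}{\gamma}$ invariant, and group inversion interchanges left and right Haar measure. Hence $J(w)=\int_H|\widetilde u(h\cdot w)|^2\,d_rh$. Writing $w=h_w\cdot e$ with $h_w\in H$ and using right-invariance of $d_rh$ gives $J(w)=\int_H|\widetilde u(h\cdot e)|^2\,d_rh$, and from $d_rh=\Delta(h)^{-1}\,d_lh$ together with $\int_H g(h\cdot e)\,d_lh=\int_\Lambda g(x)\,\frac{dx}{\det(x)^n}$ we obtain
\[
  J(w)=c_u:=\int_\Lambda|\widetilde u(x)|^2\,\Delta(h_x)^{-1}\,\frac{dx}{\det(x)^n},
\]
independent of $w$. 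Since $\supp(\widetilde u)$ is a compact subset of $\mathrm{int}\,\Lambda$ and $x\mapsto\Delta(h_x)^{-1}\det(x)^{-n}$ is continuous and positive there, $c_u$ is finite and positive. Substituting back gives $\|W_u(f)\|_{L^2(G)}^2=(2\pi)^n c_u\,\|f\|_{L^2_\Lambda}^2$, which completes the argument.

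The main obstacle is the bookkeeping in the last paragraph: one must be careful to recognize that the relevant measure on $H$ is the \emph{right} Haar measure, so that absorbing the stabilizer element $h_w$ into the argument of $\widetilde u$ does not produce a $w$-dependent power of the modular function; everything else is routine Fubini and Plancherel. As a shortcut one could instead invoke the general cone-wavelet result of \cite{Bernier1996,Fabec2003}, since $\widetilde\pi$ is precisely the quasi-regular representation obtained by coupling translations with the simply transitive action of $H$ on the open orbit $\Lambda$; but the self-contained computation above is short enough to present directly.
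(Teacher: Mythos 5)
Your argument is correct, but it takes a genuinely different route from the paper: for this theorem the paper offers no proof at all, referring instead to the general results on wavelets for cones in \cite{Bernier1996} and \cite{Fabec2003}, while you establish the stronger orthogonality relation $\|W_u(f)\|_{L^2(G)}^2=c\,\|f\|_{L^2_\Lambda}^2$ by a self-contained computation. Your key steps are sound: the Fourier-side formula for $W_u(f)$, Plancherel in the variable $b$, Tonelli, and the observation that the parametrization $(\gamma,t,c)\mapsto\gamma(a_tn_c)^{-1}$ carries $\frac{d\gamma\,dt\,dc}{\gamma}$ to a right Haar measure on $H$, so that simple transitivity of $H$ on $\Lambda$ forces $J(w)$ to be constant; equivalently, you could write $J(w)=\int_H|\widetilde u(h^{-1}w)|^2\,d_lh$ and obtain constancy from left invariance alone. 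What your approach buys is an explicit admissibility constant and, after polarization, essentially the reproducing formula the paper needs later, without invoking the cited literature. Two small remarks. First, mind the modular-function convention: the paper declares $\Delta(\gamma a_tn_c)=e^{(n-2)t}$ and its right Haar measure is $e^{(n-2)t}\frac{d\gamma\,dt\,dc}{\gamma}$, i.e.\ $d_rh=\Delta(h)\,d_lh$ in the paper's notation, whereas you wrote $d_rh=\Delta(h)^{-1}d_lh$; this only changes which power of $e^{(n-2)t}$ appears in your closed form for $c_u$ and is harmless for the theorem, since with $\widetilde u$ continuous, nonzero and compactly supported in the open cone the constant is finite and positive under either convention, but if you display $c_u$ you should compute it directly from $\int_H|\widetilde u(he)|^2e^{(n-2)t}\frac{d\gamma\,dt\,dc}{\gamma}$ to match the paper's stated right Haar measure. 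Second, ``square-integrable'' is usually reserved for irreducible representations; your computation gives admissibility of a nonzero vector (which is what the paper actually uses), while irreducibility of $(\pi,L^2_\Lambda)$ -- addressed neither by you nor by the paper -- follows from the simple transitivity of $H$ on the open orbit via Mackey theory.
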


We introduce the space $\mathcal{S}_\Lambda$
of rapidly decreasing functions whose Fourier transform is supported
on the closure of the cone, i.e.
\begin{equation*}
  \mathcal{S}_\Lambda 
  = \{f\in \mathcal{S}(\mathbb{R}^n)
  \, |\, \supp(\widetilde{f})\subseteq \overline{\Lambda}    \}.
\end{equation*}
This space will be equipped with the subspace topology it inherits from
$\mathcal{S}(\mathbb{R}^n)$. The representation $\pi$ can be
restricted to $\mathcal{S}_\Lambda$ and we denote the resulting representation
by $(\pi,\mathcal{S}_\Lambda)$ or simply $\pi$.
\begin{lemma}
  Let $u\in \mathcal{S}_\Lambda$ be compactly supported
  such that $0 \leq \widetilde{u}\leq 1$ 
  and also $1/2 < \widetilde{u} \leq 1$ on a neighborhood $U$ of $e$.
  Then $u$ is cyclic in 
  $(\pi,\mathcal{S}_\Lambda)$.
  Further let
  $$
  C_u = \int_\Lambda |\widetilde{u}(w)|^2 
  \det(w)^{2(1-n)} (w_n-w_1)^{n-2}\, dw.
  $$
  Then the reproducing formula
    $$W_u(v)*W_u(u)= C_u W_u(v)$$
  holds for $v\in \mathcal{S}_\Lambda$.
\end{lemma}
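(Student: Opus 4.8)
The plan is to pass to the Fourier side, prove the orthogonality relations for the (unitary) representation $(\pi,L^2_\Lambda)$, and deduce both assertions from them.

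Recall that $\widetilde{\pi(\gamma a_tn_c,b)u}(w)=\gamma^{n/2}\widetilde u(\gamma(a_tn_c)^{-1}w)\,e^{-iB(b,w)}$, so that for $v\in\mathcal S_\Lambda$
\[
  W_u(v)(\gamma a_tn_c,b)=\gamma^{n/2}\int_\Lambda \widetilde v(w)\,\overline{\widetilde u(\gamma(a_tn_c)^{-1}w)}\,e^{iB(b,w)}\,dw ,
\]
which, as a function of $b$, is $\gamma^{n/2}$ times the inverse $B$-Fourier transform of the compactly supported smooth function $w\mapsto\widetilde v(w)\overline{\widetilde u(\gamma(a_tn_c)^{-1}w)}$. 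First I would establish the orthogonality relation
\[
  \int_G W_u(v_1)(g)\,\overline{W_u(v_2)(g)}\,dg = C_u\,(v_1,v_2)_{L^2}\qquad(v_1,v_2\in\mathcal S_\Lambda).
\]
Integrating in $b$ and applying Plancherel collapses the $b$-integral to $\gamma^{n}\int_\Lambda \widetilde v_1\overline{\widetilde v_2}\,|\widetilde u(\gamma(a_tn_c)^{-1}w)|^2\,dw$; the factor $\gamma^{n}$ cancels the $\gamma^{-n}$ hidden in the left Haar measure $dg=\gamma^{-n}\,dh\,db$, and after interchanging the order of integration everything reduces to showing that
\[
  I(w):=\int_H|\widetilde u(\gamma(a_tn_c)^{-1}w)|^2\,\frac{d\gamma\,dc\,dt}{\gamma}
\]
is independent of $w\in\Lambda$ and equals $C_u$. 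Since $H$ acts simply transitively on $\Lambda$, a change of variables proves $I(w)$ constant (use $\gamma\mapsto\gamma^{-1}$, which leaves $d\gamma/\gamma$ invariant, to rewrite the integrand as $|\widetilde u(h^{-1}w)|^2$, then the inversion formula and right-invariance of Haar measure on $H$); transporting the integral to $\Lambda$ through $h\mapsto h\cdot e$, the $H$-invariant measure becomes $\det(y)^{-n}\,dy$ while the leftover modular factor $\Delta(\gamma a_tn_c)=e^{(n-2)t}$, combined with $\det(h\cdot e)=\gamma$ and $(h\cdot e)_n-(h\cdot e)_1=\gamma e^{-t}$, becomes exactly $\det(y)^{2-n}(y_n-y_1)^{n-2}$, giving $I(w)=\int_\Lambda|\widetilde u(y)|^2\det(y)^{2(1-n)}(y_n-y_1)^{n-2}\,dy=C_u$ (which is finite because $\widetilde u$ has compact support inside $\Lambda$). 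The main obstacle here is precisely this last bookkeeping: matching the Radon--Nikodym factor produced by the non-unimodularity of $H$ with the explicit weight defining $C_u$.

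Once the orthogonality relation is available the pointwise reproducing formula follows quickly. Since $\mathcal S_\Lambda$ is $\pi$-invariant, $\pi(g_0)u\in\mathcal S_\Lambda$, and by unitarity of $\pi$ on $L^2_\Lambda$ we have $W_u(u)(g^{-1}g_0)=(u,\pi(g^{-1}g_0)u)=(\pi(g)u,\pi(g_0)u)=\overline{W_u(\pi(g_0)u)(g)}$; therefore, for $v\in\mathcal S_\Lambda$,
\[
  W_u(v)*W_u(u)(g_0)=\int_G W_u(v)(g)\,\overline{W_u(\pi(g_0)u)(g)}\,dg=C_u\,(v,\pi(g_0)u)=C_u\,W_u(v)(g_0),
\]
where the interchange of integrals is legitimate because the wavelet transform of a Schwartz function lies in $L^1(G)$ (indeed decays rapidly on $G$).

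It remains to prove cyclicity. Reading the orthogonality relation against $w\in\mathcal S_\Lambda$ gives $(v,w)_{L^2}=C_u^{-1}\int_G W_u(v)(g)\,\overline{W_u(w)(g)}\,dg$, i.e.\ $v=C_u^{-1}\int_G W_u(v)(g)\,\pi(g)u\,dg$ weakly. I would then check that this reconstruction integral converges in the Fr\'echet topology of $\mathcal S_\Lambda$: the factor $\gamma^{n/2}$ in $W_u(v)$, together with the $\gamma^{n}$-compression of the support of $w\mapsto\widetilde u(\gamma(a_tn_c)^{-1}w)$ as $\gamma\to 0$ and the rapid decay of $\widetilde v\in\mathcal S$ in the remaining directions, dominates the polynomial growth of the $\mathcal S_\Lambda$-seminorms of $\pi(g)u$; approximating the integral by its Riemann sums then presents $v$ as a limit in $\mathcal S_\Lambda$ of finite linear combinations of the vectors $\pi(g)u$, so $u$ is cyclic. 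An alternative, more direct route: if $\phi\in\mathcal S_\Lambda^*$ satisfies $W_u(\phi)=0$, then for each $h$ the Fourier transform in $b$ of $W_u(\phi)(h,\cdot)$ vanishes, hence the compactly supported distribution $w\mapsto\widetilde u(\gamma(a_tn_c)^{-1}w)\widetilde\phi(w)$ is zero; as $\widetilde u>1/2$ near $e$ and $H$ is transitive, $\widetilde\phi$ vanishes on the open cone $\Lambda$, and since the Fourier transform of any element of $\mathcal S_\Lambda$ vanishes to infinite order along the smooth part of $\partial\Lambda$, this forces $\phi=0$ (the apex of the cone requiring a short separate argument).
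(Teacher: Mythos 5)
Your proposal is correct, and it is organized quite differently from the paper's own proof. The paper's proof addresses only the cyclicity claim: it works on the Fourier side with a conjugate-linear functional $L$ annihilating the orbit, uses the theory of compactly supported distributions to show that $\overline{\widetilde{\pi}(h,0)\widetilde{u}}\,\widetilde{L}=0$, covers a compact subset of $\Lambda$ by finitely many translates of $U$ to build the function $\Psi$ bounded below there, and concludes $L=0$ after approximating a general element of $\mathcal{S}_\Lambda$ by one with compactly supported Fourier transform. The reproducing formula and the value of $C_u$ are not derived in the paper's proof at all --- they are implicitly inherited from the square-integrability theorem cited from Bernier--Taylor and Fabec--\'Olafsson. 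You instead prove the orthogonality relations directly: Plancherel in $b$, cancellation of the $\gamma^{n}$ against the $\gamma^{-n}$ in the Haar measure, left-invariance to show $I(w)$ is constant, and the inversion/modular-function bookkeeping to identify the constant with $C_u$; I checked that $\Delta^{-1}(\gamma a_tn_c)=e^{-(n-2)t}=\det(y)^{2-n}(y_n-y_1)^{n-2}$ combined with $\det(y)^{-n}\,dy$ does reproduce the stated weight $\det(y)^{2(1-n)}(y_n-y_1)^{n-2}$. This buys an explicit, self-contained derivation of the reproducing formula with its constant, which the paper leaves to references. Two minor caveats: with the paper's normalization $\widetilde{f}(w)=(2\pi)^{-n/2}\int f e^{-iB(x,w)}dx$, the $b$-Plancherel step actually produces an extra factor $(2\pi)^{n}$ in front of your $\gamma^{n}\int\widetilde v_1\overline{\widetilde v_2}|\widetilde u|^2$, so the constant you obtain is $(2\pi)^{n}C_u$ --- a normalization discrepancy you should flag rather than silently drop. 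And your first route to cyclicity (Fr\'echet convergence of the reconstruction integral and its Riemann sums) is the one genuinely sketchy step, requiring nontrivial seminorm estimates; your alternative route is essentially the paper's argument, including the covering step with $\widetilde u>1/2$ on $U$ and the infinite-order vanishing of $\widetilde f$ at $\partial\Lambda$ needed for the density of compactly supported spectra, so the cyclicity claim is safely covered either way.
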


\begin{proof}
  The Fourier transform $\widetilde{\mathcal{F}}$ has
  the same properties as the usual Fourier transform.
  The calculations below are immediate adaptations of results
  found in for example \cite[Chapter 6 and 7]{Rudin1991}.
  
  Let $L$ be in the conjugate dual of
  $\mathcal{S}_\Lambda$ and assume that
  $\dup{L}{\pi(\gamma a_t n_c,b)u} = 0$ for 
  all $(\gamma a_t n_c,b)\in G$. Then
  the Fourier transform can be used to obtain
  $$\dup{\widetilde{L}}{\widetilde{\pi}
    (\gamma a_t n_c,b)\widetilde{u}} = 0.$$
  Let $e_b(w) = e^{-iB(b,w)}$. The equation above can be rewritten as
  $$ 
  0 = \dup{\widetilde{L}}{e_b \widetilde{\pi}
      (\gamma a_t n_c,0)\widetilde{u}}
  = \dup{(\overline{\widetilde{\pi}(\gamma a_t n_c,0)
      \widetilde{u}})\widetilde{L}}{e_b }
  $$
  which shows that the compactly supported functional
  $(\overline{\widetilde{\pi}(\gamma a_t n_c,0)
    \widetilde{u}})\widetilde{L}$ is equal to $0$
  (see \cite[Theorem 7.23]{Rudin1991}).
  This means that for all $v \in \mathcal{S}_\Lambda$ for which
  $\widetilde{v}$ has compact support $C\subseteq \Lambda$ 
  we have the equalities
  $$
  \dup{\widetilde{L}}{\widetilde{v}\widetilde{\pi}(\gamma a_t n_c,0)
    \widetilde{u}}
  =
  \dup{\overline{\widetilde{\pi}(\gamma a_t n_c,0)
      \widetilde{u}}\widetilde{L}}{\widetilde{v}}
  =0.
  $$
  We will now show that $\dup{v}{L}$ is also $0$.
  Since $C$ is compact we can cover $C$ by a finite number of translates
  of $U$
  $$C\subseteq \bigcup_{i=1}^m(\gamma a_t n_c)_i U$$
  Define the function 
  $$\Psi = \sum_{i=1}^m
  \pi((\gamma a_t n_c)_i,0)\widetilde{u}$$
  which has support containing $C$ (here we use that $\widetilde{u}$
  is bounded away from $0$ on $U$). Then 
  $\widetilde{v}/\Psi$
  is in $C_c^\infty$ and we see that
  $$\dup{\widetilde{L}}{\widetilde{v}}
  =\dup{\Psi \widetilde{L}}{v/\Psi}
  =\sum_{i=1}^n \dup{
    \overline{\widetilde{\pi}((\gamma a_t n_c)_i,0)\widetilde{u} }
    \widetilde{L}}{v/\Psi} 
  =0. 
  $$
  Lastly, any function in $\mathcal{S}_\Lambda$ can be 
  approximated by a function whose Fourier transform is compact, and
  therefore $L=0$ in the conjugate dual of $\mathcal{S}_\Lambda$.
\end{proof}

We will need the following lemma, which corresponds to Lemma 3.11 in 
\cite{Bekolle2004}. 

\begin{lemma}
  \label{lem:rapidoncone}
  If $f\in \mathcal{S}_\Lambda$ and $k,l$ are non-negative 
  integers then there is a constant $C_{k}$ such that
  \begin{equation*}
    |\widetilde{f}(w)| 
    \leq C_{k}\| \widetilde{f}\|_{k,l} \frac{\det(w)^{k}}{(1+|w|^2)^l}.
  \end{equation*}
\end{lemma}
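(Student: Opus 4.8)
The plan is to exploit that $\widetilde{f}$ is supported in the closed cone. Since $\widetilde{f}\in\mathcal{S}(\mathbb{R}^n)$ vanishes identically on the open set $\mathbb{R}^n\setminus\overline{\Lambda}$, every derivative $D^\alpha\widetilde{f}$ vanishes there as well, and by continuity $D^\alpha\widetilde{f}$ vanishes on all of $\partial\Lambda$, for every multi-index $\alpha$. Thus $\widetilde{f}$ vanishes to infinite order on $\partial\Lambda$. I would then recover $|\widetilde{f}(w)|$ for $w\in\Lambda$ from a $k$-th order Taylor expansion around a boundary point $w_0\in\partial\Lambda$ chosen close to $w$, controlling the remainder by the Schwartz seminorm $\|\widetilde{f}\|_{k,l}$. (For $w\notin\Lambda$ we have $\widetilde{f}(w)=0$, so it suffices to treat $w\in\Lambda$, where $|w|\ge w_n>0$.)

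The geometric core is to find, for each $w\in\Lambda$, a point $w_0\in\partial\Lambda$ with $|w-w_0|\le\det(w)$ such that in addition the whole segment $[w_0,w]$ stays in $\{\xi:|\xi|\ge|w|/\sqrt{2}\}$; the second property is what makes the weight $(1+|\xi|^2)^{-l}$ appearing in the remainder comparable to $(1+|w|^2)^{-l}$. One cannot simply take $w_0$ to be the nearest boundary point: when $w$ lies near the axis of the cone the nearest point can be essentially the origin, which is far from $w$ relative to the weight. The remedy uses that $w\in\Lambda$ forces $w_n^2>w_1^2+\dots+w_{n-1}^2$, hence $|w|^2<2w_n^2$, i.e. $w_n>|w|/\sqrt{2}$. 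Writing $w'=(w_1,\dots,w_{n-1})$, I would move $w$ in the purely spatial direction $\mu=(w'/|w'|,0)$ (or $(e_1,0)$ when $w'=0$); this keeps the $n$-th coordinate fixed at $w_n$, so every point of the resulting segment has norm at least $w_n>|w|/\sqrt{2}$. Setting $w_0=w+s\mu$, the equation $B(w_0,w_0)=0$ reduces to $s^2+2|w'|s-\det(w)^2=0$, whose positive root is $s=\det(w)^2/\bigl(|w'|+\sqrt{|w'|^2+\det(w)^2}\bigr)\le\det(w)$, and $w_0$ has positive $n$-th coordinate, so $w_0\in\partial\Lambda$ as required.

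With such $w_0$ in hand the conclusion is routine. Taylor's formula with integral remainder of order $k$ applied to $\widetilde{f}$ along $[w_0,w]$ has all terms of order $<k$ equal to zero by the infinite-order vanishing at $w_0$, so
\[
  |\widetilde{f}(w)|\le C_{n,k}\,|w-w_0|^k\,\sup_{\xi\in[w_0,w]}\,\max_{|\alpha|=k}|D^\alpha\widetilde{f}(\xi)|
  \le C_{n,k}\,\det(w)^k\,\|\widetilde{f}\|_{k,l}\,\sup_{\xi\in[w_0,w]}(1+|\xi|^2)^{-l},
\]
and since $|\xi|\ge|w|/\sqrt{2}$ on the segment, $(1+|\xi|^2)^{-l}\le 2^l(1+|w|^2)^{-l}$, which yields the asserted bound with a constant $C_k$ depending on $k$, $l$ and $n$. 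I expect the only genuine obstacle to be the geometric step — choosing the boundary point so that the Taylor remainder is simultaneously small in $\det(w)$ and compatible with the polynomial weight; once that is arranged the rest is bookkeeping with Taylor's theorem and the definition of $\|\cdot\|_{k,l}$.
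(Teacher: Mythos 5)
Your proof is correct. The paper itself gives no argument for this lemma — it simply cites Lemma 3.11 of \cite{Bekolle2004} — so there is nothing internal to compare against; your Taylor-expansion argument is the standard route and it is carried out soundly: all derivatives of $\widetilde{f}$ vanish on the open complement of $\overline{\Lambda}$ and hence, by continuity, on $\partial\Lambda$; your boundary point $w_0=w+s\mu$ with $s=-|w'|+\sqrt{|w'|^2+\det(w)^2}=w_n-|w'|\leq\det(w)$ indeed lies on $\partial\Lambda$ (its last coordinate is still $w_n>0$), the segment $[w_0,w]$ keeps its last coordinate equal to $w_n>|w|/\sqrt{2}$, and the order-$k$ Taylor remainder then yields exactly the stated bound, with the degenerate case $k=0$ being immediate from the seminorm. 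One small quibble: your motivating claim that the nearest boundary point could be ``essentially the origin'' is not accurate for this $45^\circ$ cone — the foot of the perpendicular from $w$ onto the boundary has norm $(w_n+|w'|)/\sqrt{2}\geq|w|/2$ and distance $(w_n-|w'|)/\sqrt{2}\leq\det(w)$ from $w$, so that choice would also have worked — but this is only commentary and does not affect the validity of your construction.
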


We will further need an estimate of the wavelet coefficients
of Schwartz functions. The estimate
actually shows that the wavelet coefficients are integrable.

\begin{lemma}
  The mapping 
  $$
  \mathcal{S}_\Lambda \ni v\mapsto 
  \int_G |W_u(v)(\gamma a_t n_c,b)| 
  \gamma^r \frac{d\gamma\,dt\, dc\,db}{\gamma^{n+1}}
  \in \mathbb{R}^+
  $$
  is continuous for all $r\in \mathbb{R}$.
\end{lemma}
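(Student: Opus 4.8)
The plan is to pass to the Fourier realisation and to reduce the weighted $L^1$-norm of $W_u(v)$ to a sum of $L^1$-norms of derivatives of a fixed family of compactly supported functions on the cone. First, for $v\in\mathcal S_\Lambda$ and $g=(\gamma a_tn_c,b)$, Parseval together with the explicit form of $\widetilde\pi$ gives
\[
  W_u(v)(g)=\int_{\mathbb R^n}\widetilde v(w)\,\overline{\widetilde\pi(g)\widetilde u(w)}\,dw
  =\gamma^{n/2}\int_{\mathbb R^n}\widetilde v(w)\,\overline{\widetilde u(\gamma (a_tn_c)^{-1}w)}\,e^{iB(b,w)}\,dw ,
\]
and, since $a_tn_c$ preserves $B$, the substitution $w=\gamma^{-1}(a_tn_c)w'$ (whose Jacobian is $\gamma^{-n}$, as $a_tn_c\in\mathrm{SO}_0(n-1,1)$) turns this into
\[
  W_u(v)(\gamma a_tn_c,b)=\gamma^{-n/2}\int_{\mathbb R^n} G_{\gamma,t,c}(w')\,e^{iB(\gamma^{-1}(a_tn_c)^{-1}b,\,w')}\,dw',\qquad G_{\gamma,t,c}(w')=\widetilde v(\gamma^{-1}(a_tn_c)w')\,\overline{\widetilde u(w')}.
\]
I will use that $\widetilde u$ is supported in a fixed compact subset $K$ of the \emph{open} cone $\Lambda$ (which we may assume), so that each $G_{\gamma,t,c}$ is smooth with support inside $K$. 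Substituting $b\mapsto\gamma(a_tn_c)b$ in the $b$-integral (Jacobian $\gamma^{n}$) gives $\int_{\mathbb R^n}|W_u(v)(\gamma a_tn_c,b)|\,db=c_n\gamma^{n/2}\|\widetilde{\mathcal F}[G_{\gamma,t,c}]\|_{L^1}$. The elementary bound $\|\widetilde{\mathcal F}[G]\|_{L^1}\le C_n\max_{|\alpha|\le n+2}\|\partial^\alpha G\|_{L^1}$ for compactly supported $G\in C^\infty$ — obtained by inserting $1=(1+|\zeta|^2)^{-N}(1+|\zeta|^2)^N$ with $2N>n$ and moving $(1+|\zeta|^2)^N$ onto $G$ through $\widetilde{\mathcal F}$ by integration by parts — then reduces matters to estimating $\|\partial^\alpha G_{\gamma,t,c}\|_{L^1}$ for $|\alpha|\le n+2$, uniformly enough in $(\gamma,t,c)$ to survive integration against $\gamma^r\frac{d\gamma\,dt\,dc}{\gamma^{n+1}}$.

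\medskip
Next I would expand $\partial^\alpha G_{\gamma,t,c}$ by Leibniz and the chain rule: it is a finite linear combination of terms $\gamma^{-|\mu|}P_\mu(a_tn_c)\,(\partial^\mu\widetilde v)(\gamma^{-1}(a_tn_c)w')\,(\partial^\nu\overline{\widetilde u})(w')$ with $|\mu|+|\nu|\le n+2$, where $P_\mu(a_tn_c)$ is a product of at most $|\mu|$ entries of $a_tn_c$; since the entries of $a_t$ are $O(e^{|t|})$ and those of $n_c$ are $O((1+|c|)^2)$ we get $|P_\mu(a_tn_c)|\le Ce^{(n+2)|t|}(1+|c|)^{2(n+2)}$. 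The factors $\partial^\nu\overline{\widetilde u}$ are bounded with support in $K$. For $(\partial^\mu\widetilde v)(z)$ I would apply Lemma~\ref{lem:rapidoncone} to $\widetilde{\mathcal F}^{-1}(\partial^\mu\widetilde v)\in\mathcal S_\Lambda$ (whose Fourier transform $\partial^\mu\widetilde v$ is supported in $\overline\Lambda$): for all integers $k,l\ge 0$,
\[
  |(\partial^\mu\widetilde v)(z)|\le C_k\,\|\partial^\mu\widetilde v\|_{k,l}\,\frac{\det(z)^k}{(1+|z|^2)^l},
\]
and $v\mapsto\|\partial^\mu\widetilde v\|_{k,l}$ is a continuous seminorm on $\mathcal S_\Lambda$ because $\widetilde{\mathcal F}$ is a topological automorphism of $\mathcal S(\mathbb R^n)$.

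\medskip
It remains to evaluate these bounds at $z=\gamma^{-1}(a_tn_c)w'$, $w'\in K$, and to collect exponents. Invariance of $\det$ under $a_tn_c$ gives $\det(z)=\gamma^{-1}\det(w')$, which is $\asymp\gamma^{-1}$ on $K$. The point on which everything hinges — and which I expect to be the main obstacle — is a uniform lower bound
\[
  |(a_tn_c)w'|^2\ \ge\ c_K\bigl(1+e^{2|t|}+|c|^2\bigr)\qquad(w'\in K),
\]
so that $(1+|z|^2)^{-l}\le C\gamma^{2l}(1+e^{2|t|}+|c|^2)^{-l}$; this is exactly what converts the Schwartz decay of $\widetilde v$ into decay in the $A$- and $N$-directions, and it must be read off by hand from the explicit Iwasawa matrices, using that $w'_n-w'_1$, $w'_n+w'_1$ and $\det(w')$ are bounded below on the compact set $K\subseteq\Lambda$ (one computes $(a_tn_c w')_n=\tfrac{e^t}{2}(Q+|c|^2P-2c^{T}\check w)+\tfrac{e^{-t}}{2}P$ with $P=w'_n-w'_1$, $Q=w'_n+w'_1$, $\check w$ the middle block, notes that $Q+|c|^2P-2c^{T}\check w$ is a positive definite quadratic in $c$ with minimum $\det(w')^2/P>0$, and also uses the middle coordinates $\check w-cP$ to recover the $|c|^2$ when $t<0$). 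Granting this, one collects the powers of $\gamma$ ($\gamma^{n/2}$ from the $b$-integral, $\gamma^{-|\mu|}$, $\gamma^{-k}$, $\gamma^{2l}$, the weight $\gamma^r$, the Haar density $\gamma^{-(n+1)}$) together with the $(t,c)$-factor $e^{(n+2)|t|}(1+|c|)^{2(n+2)}(1+e^{2|t|}+|c|^2)^{-l}$, and chooses first $l$ so large that the $dt\,dc$-integral converges and then (splitting $\int_0^\infty d\gamma$ at $\gamma=1$, taking $k=0$ for $\gamma\le 1$ and $k$ large for $\gamma\ge 1$) $k$ so large that the $d\gamma$-integral converges. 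The resulting inequality bounds the map in the statement by a constant times a continuous seminorm of $v$, which is precisely the asserted continuity; beyond the geometric lower bound, everything is routine exponent bookkeeping.
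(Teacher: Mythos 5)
Your proposal is correct and follows essentially the same route as the paper: Fourier-side integration by parts to gain decay in $b$, a Leibniz/chain-rule expansion producing polynomial factors in the group entries, the rapid-decay estimate of Lemma~\ref{lem:rapidoncone} applied to derivatives of $\widetilde v$, and convergence of the $H$-integral by splitting at $\gamma=1$ and choosing $l$ first and then $k$. Your explicit lower bound $|(a_tn_c)w'|^2\gtrsim 1+e^{2|t|}+|c|^2$ for $w'$ in a compact subset of the open cone is the same geometric fact the paper invokes via the comparison $1+|\gamma^{-1}a_tn_cw|^2\asymp 1+|\gamma^{-1}a_tn_ce|^2$ on $\supp\widetilde u$, and your quadratic-form computation in $c$ makes explicit a point the paper leaves implicit.
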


\begin{proof}
  First note that the wavelet coefficients can be rewritten as
  \begin{align*}
    W_u(f)(\gamma a_t n_c,b)
    &= \ip{f}{\pi(\gamma a_t n_c,b)u} \\
    &= \gamma^{n/2} \int_\Lambda \widetilde{f}(w) 
    \widetilde{u}(\gamma n_{-c}a_{-t}w)e^{-iB(w,b)} \,dw \\
    &= -b_i^{-2} \gamma^{n/2} \int_\Lambda 
    \frac{\partial^2}{\partial w_i^2}[\widetilde{f}(w) 
    \widetilde{u}(\gamma n_{-c}a_{-t}w)] e^{-iB(w,b)} \,dw \\
  \end{align*}
  where we have used integration by parts twice.
  Therefore, if 
  $L = -\sum_{k=1}^n \frac{\partial^2 }{\partial w_k^2}$ is
  the Laplacian 
  we obtain
  \begin{equation*}
    (1+|b|^2) W_u(f)(\gamma a_t n_c,b) 
    = \gamma^{n/2} \int_\Lambda 
    ( 1+L) [\widetilde{f}(w)\widetilde{u}(\gamma n_{-c}a_{-t}w)]
    e^{-iB(w,b)} \,dw \\
  \end{equation*}
  If we repeat the argument we are able to obtain
  \begin{equation*}
     |W_u(f)(\gamma a_t n_c,b)|
    \leq (1+|b|^2)^{-N} \gamma^{n/2} \int_\Lambda
    |(1+L)^N [\widetilde{f}( w)\widetilde{u}
    (\gamma n_{-c} a_{-t} w)]| \,dw
  \end{equation*}
  for any $N$, thus proving that the wavelet coefficients are 
  indeed integrable in $b$.
  We have
  \begin{equation*}
    (1+L)^N [\widetilde{f}( w)\widetilde{u}
    (\gamma n_{-c} a_{-t} w)]
    = \sum_{|\alpha+\beta| \leq 2N}
    p_{\beta}(\gamma n_{-c}a_{-t})
    \partial^\alpha \widetilde{f}(w)
    \partial^\beta \widetilde{u} (\gamma n_{-c}a_{-t}w)
  \end{equation*}
  where $\alpha,\beta$ are multi-indices and 
  $p_{\beta}(\gamma n_{-c}a_{-t})$ are polynomials 
  in the entries of the matrix $\gamma n_{-c}a_{-t}$ (see the form
  of this matrix in \eqref{eq:1}).
  We thus have to show the integrability in $\gamma$,$t$ and $v$ 
  of expressions of the form
  \begin{equation*}
    |p_\beta(\gamma n_{-c}a_{-t})|\gamma^{n/2}
    \int_\Lambda |
    \partial^\alpha \widetilde{f}(w)
    \partial^\beta \widetilde{u} (\gamma n_{-c}a_{-t}w) |
     \,dw
  \end{equation*}
  A change of variable and use of the fact that $C=\supp({\widetilde{u}})$
  is compact, reduces this to show that
  \begin{equation*}
    \gamma^{-n/2}
    |p_\beta(\gamma n_{-c}a_{-t})| 
    \|\partial^\beta \widetilde{u} \|_{\infty}
    \int_C |\partial^\alpha \widetilde{f}(\gamma^{-1} a_t n_c w)|
    \,dw
  \end{equation*}
  is integrable.
  By Lemma~\ref{lem:rapidoncone} we can estimate any such expression
  by 
  \begin{equation}
    \label{eq:2}
    C_k\gamma^{-n/2}
    |p_\beta(\gamma n_{-c}a_{-t})| 
    \|\partial^\beta \widetilde{u} \|_{\infty}    
    \| \partial^{\alpha}\widetilde{f}\|_{k,l}
    \int_C
    \frac{\det(\gamma^{-1}w)^{k}}{(1+|\gamma^{-1}a_t n_cw|^2)^{l}}
    \gamma^{n/2}\,dw
  \end{equation}
  for arbitrary $k,l$. 
  Since the set $C$ is compact, $w$ is bounded away from $0$ and we
  see that 
  $$
  C_1(1+|\gamma^{-1}a_t n_ce|^2)
  \leq 1+|\gamma^{-1}a_t n_c w|^2
  \leq C_2(1+|\gamma^{-1}a_t n_ce|^2),
  $$
  and we can estimate \eqref{eq:2} by
  \begin{equation*}
    C_{k}\|\partial^\beta \widetilde{u} \|_{\infty}    
    \|\partial^{\alpha}\widetilde{v}\|_{k,l}
    \frac{|p_\beta(\gamma n_{-c}a_{-t})| \gamma^{-k-n/2}}{
      (1+|\gamma^{-1}a_t n_ce|^2)^{l}}.
  \end{equation*}
  All that is left now is to show that 
  \begin{equation}
    \label{eq:9}
    \int_H \frac{|p_\beta(\gamma n_{-c}a_{-t})| \gamma^{-k-n/2}}{
      (1+|\gamma^{-1}a_t n_ce|^2)^{l}} 
    \gamma^r\frac{d\gamma\,dt\,dc}{\gamma^{n+1}} < \infty.
  \end{equation}
  We split this integral into two cases.
  
  \textbf{Case 1:}
  $0< \gamma \leq 1$ \\ 
  The expression \eqref{eq:9} can be estimated by
  \begin{align*}
    \int_{\mathbb{R}^{n-2}} \int_\mathbb{R} &\int_0^1 
    \frac{|p_\beta(n_{-c}a_{-t})| \gamma^{r-k-3n/2-1}}{
      |\gamma^{-1}a_t n_ce|^{2l}} \,d\gamma\,dt\,dc \\
    &=
     \int_{\mathbb{R}^{n-2}} \int_\mathbb{R} \int_0^1 
     \frac{|p_\beta(n_{-c}a_{-t})| \gamma^{2l+r-k-3n/2-1}}{
      |a_t n_ce|^{2l}} \,d\gamma\,dt\,dc.
  \end{align*}
  The integral over $\gamma$ is finite for $0<\gamma \leq 1$ if
  $l$ is chosen large enough (and $k=0$).
  Now
  \begin{align}
    n_{-c}a_{-t}
    &= 
    \begin{pmatrix}
      \cosh t -e^t |c|^2 & c^T & -\sinh t+  e^t |c|^2 \\
      -e^t c & I & e^tc \\
      -\sinh t -e^t |c|^2 & c^T & \cosh t + e^t|c|^2
    \end{pmatrix} \label{eq:1}
    \intertext{and}
    a_tn_{c}e
    &= 
    \begin{pmatrix}
      \sinh t+ e^t |c|^2 \\
      -c \\
      \cosh t + e^t|c|^2
    \end{pmatrix},\notag
  \end{align}
  so we see that $|p_\beta(n_{-c}a_{-t})|$ 
  will be dominated by $|a_tn_ce|^{2l} \geq 1$ if $l>\mathrm{deg}(p_\beta)$.
  Thus choosing $l$ large enough, the integral in
  \eqref{eq:9} will be finite.
  
\textbf{Case 2:}
  $\gamma \geq 1$. \\
  In this case 
  \eqref{eq:9} can be estimated by
  \begin{equation*}
     \int_{\mathbb{R}^{n-2}} \int_\mathbb{R}
     \frac{|p_\beta(n_{-c}a_{-t})|}{|a_t n_ce|^{2l}} 
    \,dt\,dc
    \int_1^\infty  
    \gamma^{r+2l+\mathrm{deg}(p_\beta)-k-3n/2-1}
    \,d\gamma.
  \end{equation*}
  The first integral is finite if $l$ is large enough, and the
  second integral is finite when $k$ is chosen large enough (depending on $l$).

To sum up we have obtained the following estimate
$$
\int_G |W_u(v)(\gamma a_t n_c,b)| 
\gamma^r \frac{d\gamma\,dt\, dc\,db}{\gamma^{n+1}}
\leq C \sum_{|\alpha+\beta|\leq 2N} 
\|\partial^\beta \widetilde{u} \|_{\infty} 
\|\partial^{\alpha}\widetilde{v} \|_{k,l},
$$
which shows the continuous dependence on $v$.
\end{proof}

Denote by
$L^{p,q}_s(G)$ the space of measurable functions $f$ on the group
for which
\begin{equation*}
  \| f\|_{L^{p,q}_s} = 
  \Big( \int_{H} \Big( \int_{\mathbb{R}^n} 
  |f(\gamma a_t n_c,b)|^p \,db \Big)^{q/p}
  \gamma^s\frac{\,d\gamma\,dt\,dc}{\gamma^{n+1}}\Big)^{1/q} < \infty
\end{equation*}
then the integrability of $W_u(v)$ shows that
\begin{lemma}\label{lemma:convcontcone}
  For $u,v\in\mathcal{S}_\Lambda$ it holds that
  $L^{p,q}_s*W_u(v) \subseteq L^{p,q}_s$ and
  $$L^{p,q}_s\ni F \mapsto F*W_{u}(v) \in L^{p,q}_s$$
  is continuous.
\end{lemma}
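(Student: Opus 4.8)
The plan is to derive the assertion from a weighted Young-type convolution inequality on $G=H\ltimes\mathbb{R}^n$, feeding in the integrability of the kernel $W_u(v)$ just established. Since $|F*W_u(v)(g)|\le(|F|*|W_u(v)|)(g)$ pointwise and $L^{p,q}_s$ is a solid space, it suffices to bound $F\mapsto |F|*|W_u(v)|$ on $L^{p,q}_s$; the inclusion $L^{p,q}_s*W_u(v)\subseteq L^{p,q}_s$ then follows at once. So I take $F\ge0$ and set $K=|W_u(v)|$.

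First I would record how translations act on $L^{p,q}_s$. In the coordinates $g=(\gamma a_tn_c,b)$, using $(h_1,b_1)^{-1}(h_2,b_2)=(h_1^{-1}h_2,\,h_1^{-1}(b_2-b_1))$, a direct change of variables shows that translation on $L^{p,q}_s$ is bounded: the $b$-substitution is a dilation by a power of $\gamma$ together with a translation, and the $H$-substitution is a translation on $H=\mathbb{R}_+\times AN$, under which the weight $\gamma^s$ (which factors through the $\mathbb{R}_+$-component and is multiplicative) transforms by a scalar and $\gamma^{s-n}\,dh$ picks up only a constant factor. The upshot is that the operator norms of left and right translation on $L^{p,q}_s$ are fixed products of a power of $\gamma$ and a power of the modular function $\Delta_H=e^{(n-2)t}$ of $H$. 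Writing $w$ for the resulting weight, the standard estimate obtained by expressing $F*K$ as an integral of translates of $F$ and applying Minkowski's integral inequality in $L^{p,q}_s$ gives $\|F*K\|_{L^{p,q}_s}\le C\,\|F\|_{L^{p,q}_s}$ with $C=\int_G K(u)\,w(u)\,\Delta_G(u)^{-1}\,du$, and the weight $w(u)\Delta_G(u)^{-1}$ is again a fixed product of a power of $\gamma$ and a power of $e^{t}$.

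It remains to see that $C<\infty$ for $K=|W_u(v)|$, and this is exactly what the preceding lemma provides: its proof shows not only that $\int_G|W_u(v)|\,\gamma^r\,du<\infty$ for every real $r$, but, through the decay factor $(1+|\gamma^{-1}a_tn_ce|^2)^{-l}$ that also controls the $t$- and $c$-directions, that arbitrary polynomial (indeed exponential) weights in $t$ may be inserted as well; hence $C$ is finite and depends continuously on $u$ and $v$ (the argument applies equally to any $u,v\in\mathcal{S}_\Lambda$). The only thing requiring care is the bookkeeping of Jacobian and modular factors in the non-unimodular semidirect product $G$ — the exact form of $w$ and of $\Delta_G$; this is routine and, crucially, \emph{not} an obstacle, because the preceding lemma delivers integrability against every admissible weight simultaneously, so no sharp tracking of exponents is needed. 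A more structural alternative is to observe that the estimate on wavelet coefficients says precisely that $W_u(v)$ lies in a Wiener amalgam space $W(L^1_w)$ over $G$, and then to invoke the general convolution relation $L^{p,q}_s*W(L^1_w)\subseteq L^{p,q}_s$ from the coorbit theory of Feichtinger and Gr\"ochenig.
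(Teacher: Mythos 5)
Your proof is correct and follows essentially the route the paper itself intends: the paper offers no separate argument for this lemma, asserting only that it follows from the integrability of $W_u(v)$, which is exactly the weighted Young/Minkowski estimate you carry out, with the convolution constant being $\int_G |W_u(v)|$ against a weight of the form $\gamma^{r}e^{ct}$ coming from the right-translation operator norm on $L^{p,q}_s$ and the modular functions of $H$ and $G$. You also correctly note the one point the paper glosses over — the literal statement of the preceding lemma only covers weights $\gamma^{r}$, so one must return to its proof, where the factor $(1+|\gamma^{-1}a_t n_c e|^2)^{-l}$ with $l$ arbitrary absorbs the additional exponential weight in $t$ — and your handling of this is sound.
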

Further the integrability also shows that
for $1/p+1/p' =1$ and $1/q +1/q'=1$ the 
wavelet coefficient $W_u(v)$ 
is in $L^{p',q'}_{1/s}$ and therefore
\begin{lemma}
  The mapping 
  $$\mathcal{S}_\Lambda \ni v\mapsto \int_G |F(\gamma a_t n_c,b)
  W_u(v)(\gamma a_t n_c,b)| \,\frac{d\gamma\,dt\,dc}{\gamma^{n+1}}
  \in\mathbb{C}$$ is
  continuous for all $F\in L^{p,q}_s$.
\end{lemma}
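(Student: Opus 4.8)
The plan is to reduce the statement to the integrability estimate for $W_u(v)$ proved in the preceding lemma, by two applications of H\"older's inequality. Write a point of $G$ as $(\gamma a_t n_c,b)$ with $h=\gamma a_t n_c\in H$ and $b\in\mathbb{R}^n$, and set $d\mu(h)=\gamma^{-n-1}\,d\gamma\,dt\,dc$, so that the left Haar measure on $G$ factors as $dg=db\,d\mu(h)$ and $\|F\|_{L^{p,q}_s}^q=\int_H\bigl(\int_{\mathbb{R}^n}|F(h,b)|^p\,db\bigr)^{q/p}\gamma^s\,d\mu(h)$. For a fixed $F\in L^{p,q}_s$ I would first apply H\"older in the variable $b$ with exponents $p,p'$, and then in the variable $h$ with exponents $q,q'$ after splitting the weight as $\gamma^s=\gamma^{s/q}\gamma^{s/q}$, to obtain
\[
  \int_G |F(g)\,W_u(v)(g)|\,dg \;\leq\; \|F\|_{L^{p,q}_s}\,\|W_u(v)\|_{L^{p',q'}_{s'}},\qquad s'=-\frac{q'}{q}\,s
\]
(when $1<q<\infty$; with the evident modifications when $p$ or $q$ equals $1$ or $\infty$, one H\"older step being replaced by an $L^1$--$L^\infty$ pairing). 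Thus the lemma follows once we know that $v\mapsto\|W_u(v)\|_{L^{p',q'}_{s'}}$ is a continuous semi-norm on $\mathcal{S}_\Lambda$, which is exactly the assertion "$W_u(v)\in L^{p',q'}_{s'}$" stated just above the lemma, upgraded to continuous dependence on $v$.

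To prove that continuity I would reuse the estimates inside the proof of the preceding (integrability) lemma, not merely its conclusion. That proof produces, for every integer $N$, a pointwise bound $|W_u(v)(\gamma a_t n_c,b)|\leq (1+|b|^2)^{-N}\,\Phi_v(\gamma a_t n_c)$, where $\Phi_v\geq 0$ is independent of $b$ and is a finite sum, over $|\alpha+\beta|\leq 2N$, of terms $C\,\|\partial^\beta\widetilde{u}\|_\infty\,\|\partial^\alpha\widetilde{v}\|_{k,l}$ times a function of $h$ of the same type as the integrand in \eqref{eq:9}, namely a constant times $|p_\beta(\gamma n_{-c}a_{-t})|\,\gamma^{-k-n/2}\,(1+|\gamma^{-1}a_t n_c e|^2)^{-l}$, with $k,l$ depending on $N$. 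Choosing $N$ so large that $2Np'>n$, integrating out the variable $b$ costs only the finite factor $\|(1+|\cdot|^2)^{-N}\|_{L^{p'}(\mathbb{R}^n)}$, so $\bigl(\int_{\mathbb{R}^n}|W_u(v)(h,b)|^{p'}\,db\bigr)^{1/p'}\leq C_{N,p'}\,\Phi_v(h)$ and hence $\|W_u(v)\|_{L^{p',q'}_{s'}}\leq C_{N,p'}\,\|\Phi_v\|_{L^{q'}(H,\,\gamma^{s'}d\mu)}$.

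It then remains to estimate $\|\Phi_v\|_{L^{q'}(H,\gamma^{s'}d\mu)}$. Since $q'\geq 1$, I would raise the bound for $\Phi_v$ to the power $q'$, pass the power through the finite sum by convexity, and integrate over $H$; the whole matter reduces to the finiteness, for $k,l$ large, of integrals of exactly the shape \eqref{eq:9} but with the powers of $\gamma$ and $l$ and the degree of $p_\beta$ multiplied by $q'$ (and the weight $\gamma^r$ replaced by $\gamma^{s'}$), which is handled verbatim by the two-case argument ($0<\gamma\leq 1$ and $\gamma\geq 1$) of the preceding proof --- $l$ large takes care of the regime $\gamma\le1$ and of dominating the polynomial $p_\beta^{q'}$, and $k$ large takes care of $\gamma\ge1$. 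Collecting constants yields
\[
  \int_G |F(g)\,W_u(v)(g)|\,dg \;\leq\; C\,\|F\|_{L^{p,q}_s}\sum_{|\alpha+\beta|\leq 2N}\|\partial^\beta\widetilde{u}\|_\infty\,\|\partial^\alpha\widetilde{v}\|_{k,l},
\]
and since $u$ is fixed and $\widetilde{\mathcal{F}}$ is a topological isomorphism of $\mathcal{S}(\mathbb{R}^n)$, the right-hand side is a continuous semi-norm in $v$ (and is in fact jointly continuous in $(F,v)$), which proves the lemma. The only genuine obstacle here is the bookkeeping of the powers of $\gamma$ through the two H\"older steps and through the $q'$-th power of the kernel, together with the endpoint cases $p,q\in\{1,\infty\}$ of the mixed norm; all of this is routine once the pointwise bound extracted from the integrability lemma is in hand.
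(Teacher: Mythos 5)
Your argument is correct and is essentially the route the paper takes: the paper states this lemma without a separate proof, deriving it from the remark that the integrability estimates put $W_u(v)$ in the dual mixed-norm space $L^{p',q'}$ (with the dual weight) so that H\"older's inequality against $F\in L^{p,q}_s$ gives the continuous pairing. Your write-up simply fills in the details the paper leaves implicit --- the two H\"older steps, the pointwise bound $|W_u(v)(h,b)|\leq (1+|b|^2)^{-N}\Phi_v(h)$ extracted from the proof of the preceding lemma, and the two-case $\gamma$-integration --- so there is nothing to correct.
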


This verifies the assumptions for construction of coorbit spaces
for the spaces $L^{p,q}_s$ and therefore we can define
\begin{equation*}
  \Co_{\mathcal{S}_\Lambda}^u L^{p,q}_s 
  = \{ \Phi \in \mathcal{S}_\Lambda' \,|\, W_u(\Phi) \in L^{p,q}_s \}
\end{equation*}
Furthermore, Lemma~\ref{lemma:convcontcone} shows that
this space is independent on the wavelet $u$.

\begin{remark}[Discretization]
  The representation used for this construction is integrable (as we
  have shown) and therefore the discretization procedure by
  Feichtinger and Gr\"ochenig can be used directly. 
\end{remark}

\subsection{Besov spaces as coorbits}

In this section we
introduce Littlewood-Paley decompositions and a family of
Besov spaces related to these decompositions.
The construction has been carried out for
all symmetric cones in \cite{Bekolle2004} and
we refer to this article for proofs.
The last result of this paper is a wavelet description of the 
Besov spaces. In particular we
show that the Besov spaces are the coorbit spaces defined in 
the previous section.

The group $\mathbb{R}_+A$ is an abelian group with
exponential function 
$\exp : \mathbb{R}\times\mathbb{R} \to \mathbb{R}_+A$
given by $\exp(t,s) = e^t a_s$ (here $e^t$ denotes the usual
exponential function on $\mathbb{R}$).
Let $V_r = \{(s,t)\in \mathbb{R}\times\mathbb{R} | s^2+t^2 < r \}$
and define the $K$-invariant ball $B_r(e) = K\exp(V_r)e \subseteq \Lambda$. 
Then for $w=he \in \Lambda$ with $h\in H$ we define the ball
of radius $r$ centered at $w$ to be
\begin{equation*}
  B_r(w) = h B_r(e)
\end{equation*}

The following covering lemma for the cone can be extracted from
Lemma~2.6 in \cite{Bekolle2004} and is illustrated in
Figure~\ref{fig:conecover}.
\begin{lemma}[Whitney cover with lattice points $w_j$]
  \label{lem:covercone}
  Given $\delta>0$, there exists a sequence $\{ w_j\}\subseteq \Lambda$ 
  such that $B_{\delta/2}(w_j)$ are disjoint and $B_\delta(w_j)$ cover
  $\Lambda$ with the property that there is an $N$ such that
  any $w\in \Lambda$ belongs to at most $N$ of the balls $B_\delta(w_j)$ 
  (finite intersection property).
\end{lemma}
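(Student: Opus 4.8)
The plan is to recognize this as the classical Whitney covering lemma for a space of homogeneous type, so that the only genuinely cone-specific work is geometric and can be isolated at the start. Following \cite{Bekolle2004}, I would first record that $\Lambda$ is the Riemannian symmetric space $G(\Lambda)/K$, where $G(\Lambda)=\mathbb{R}_+\mathrm{SO}_0(n-1,1)$ acts isometrically for the $G(\Lambda)$-invariant metric and $\mathbb{R}_+A$ is a maximal flat of dimension $2$ through $e$. By the Cartan decomposition $G(\Lambda)=K(\mathbb{R}_+A)K$ together with $\mathrm{Ad}(K)$-conjugacy of tangent vectors into the flat, the geodesic ball of radius $r$ about $e$ is exactly $K\exp(V_r)e=B_r(e)$ (up to the monotone reparametrisation coming from the normalisation of $V_r$), and since every $h\in H\subset G(\Lambda)$ is an isometry, $B_r(w)=hB_r(e)$ is the geodesic ball about $w=he$. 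Writing $d$ for the Riemannian distance, this makes the $B_r(w)$ a genuine system of metric balls: increasing in $r$, $G(\Lambda)$-equivariant, and obeying the triangle inequality. I would also note that $\det(w)^{-n}\,dw$ is a constant multiple of Riemannian volume (an invariant measure on a homogeneous space being unique up to scaling), so the measure $\mu$ it defines gives $V(r):=\mu(B_r(w))$ which is finite, strictly positive, independent of $w$ by $H$-invariance, and increasing in $r$.

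Granting these two facts, the combinatorial core is routine. Fix $\delta>0$. By Zorn's lemma I would pick $\{w_j\}\subseteq\Lambda$ maximal among families for which the balls $B_{\delta/2}(w_j)$ are pairwise disjoint (disjointness passes to unions of chains, so a maximal family exists); since $\Lambda$ is $\sigma$-finite and the $B_{\delta/2}(w_j)$ are disjoint sets of common measure $V(\delta/2)>0$, the family is countable and may be written as a sequence, and disjointness of the half-balls is then part of the construction. For the covering property I would argue from maximality: given $x\in\Lambda$, if $B_{\delta/2}(x)$ met no $B_{\delta/2}(w_j)$ we could adjoin $x$, a contradiction; so some $B_{\delta/2}(x)\cap B_{\delta/2}(w_j)$ contains a point $y$, and the triangle inequality forces $d(x,w_j)<\delta$, i.e.\ $x\in B_\delta(w_j)$, whence $\bigcup_j B_\delta(w_j)=\Lambda$. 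Finally, for the finite intersection property I would count volumes: if $x\in B_\delta(w_{j_1})\cap\cdots\cap B_\delta(w_{j_m})$ with distinct indices, then each $w_{j_k}$ lies within $\delta$ of $x$, so the pairwise disjoint balls $B_{\delta/2}(w_{j_k})$ all sit inside $B_{2\delta}(x)$, and comparing $\mu$-measures gives $m\,V(\delta/2)\le V(2\delta)$, hence $m\le N:=\lceil V(2\delta)/V(\delta/2)\rceil$, a bound depending only on $\delta$ and $n$.

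The only step requiring real care — and the place I expect the main obstacle — is the geometric reduction in the first paragraph: checking that the transported sets $K\exp(V_r)e$ really behave like invariant metric balls (monotonicity, triangle inequality, $w$-independent finite volume), and in particular matching the normalisation of $V_r$ (a Euclidean ball of radius $\sqrt r$, not $r$) to the $\delta/2$-versus-$\delta$ scaling in the statement; this is handled simply by choosing the separation parameter of the Zorn net appropriately, but it is the bookkeeping one must not skip. Everything past that is the textbook maximal-net argument valid in any space of homogeneous type (cf.\ Coifman--Weiss), and is indeed precisely what \cite[Lemma 2.6]{Bekolle2004} provides.
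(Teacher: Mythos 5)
Your argument is correct and is essentially the paper's: the paper gives no proof of its own, simply extracting the lemma from Lemma~2.6 of \cite{Bekolle2004}, and that lemma is proved exactly as you do — identify $B_r(w)$ with balls for the $G$-invariant Riemannian distance on $\Lambda\cong\mathbb{R}_+\times\mathrm{SO}_0(n-1,1)/\mathrm{SO}(n-1)$, take a maximal separated family (Zorn), get the covering from maximality and the triangle inequality, and get the finite overlap from invariance of the measure and a volume count $mV(\delta/2)\le V(2\delta)$. One caveat on the point you yourself flag: under the paper's literal normalization $V_r=\{s^2+t^2<r\}$, the set $B_r(w)$ is the invariant ball of radius $\sqrt{r}$, and then no choice of the separation parameter rescues both conclusions simultaneously — a maximal $\rho$-separated net gives disjointness of the radius-$\rho/2$ balls and covering by the radius-$\rho$ balls, while the statement would require disjointness at radius $\sqrt{\delta/2}$ together with covering at radius $\sqrt{\delta}$, and these differ by a factor $\sqrt{2}$ rather than $2$; so "choosing the separation parameter appropriately" does not close the gap. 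The correct resolution is simply to read $B_r(w)$ as the ball of radius $r$ for the invariant distance, which is the definition used in \cite{Bekolle2004}; with that reading your maximal-net argument goes through verbatim and yields exactly the stated $\delta$-lattice.
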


\begin{figure}[h]
  \centering
  \subfigure[The ball $B_r(e)$]{
    \includegraphics[width=0.45\columnwidth]{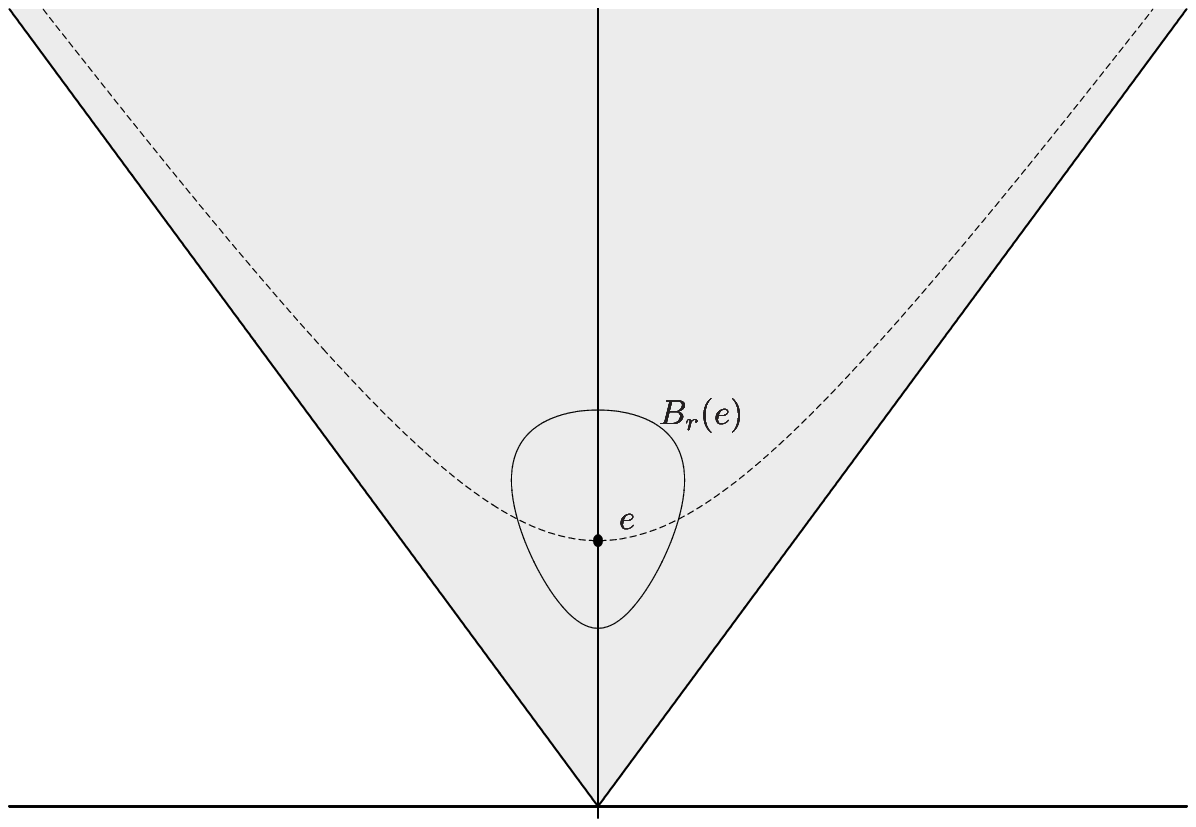}
  }
  \subfigure[Translates of $B_r(e)$]{
    \includegraphics[width=0.45\columnwidth]{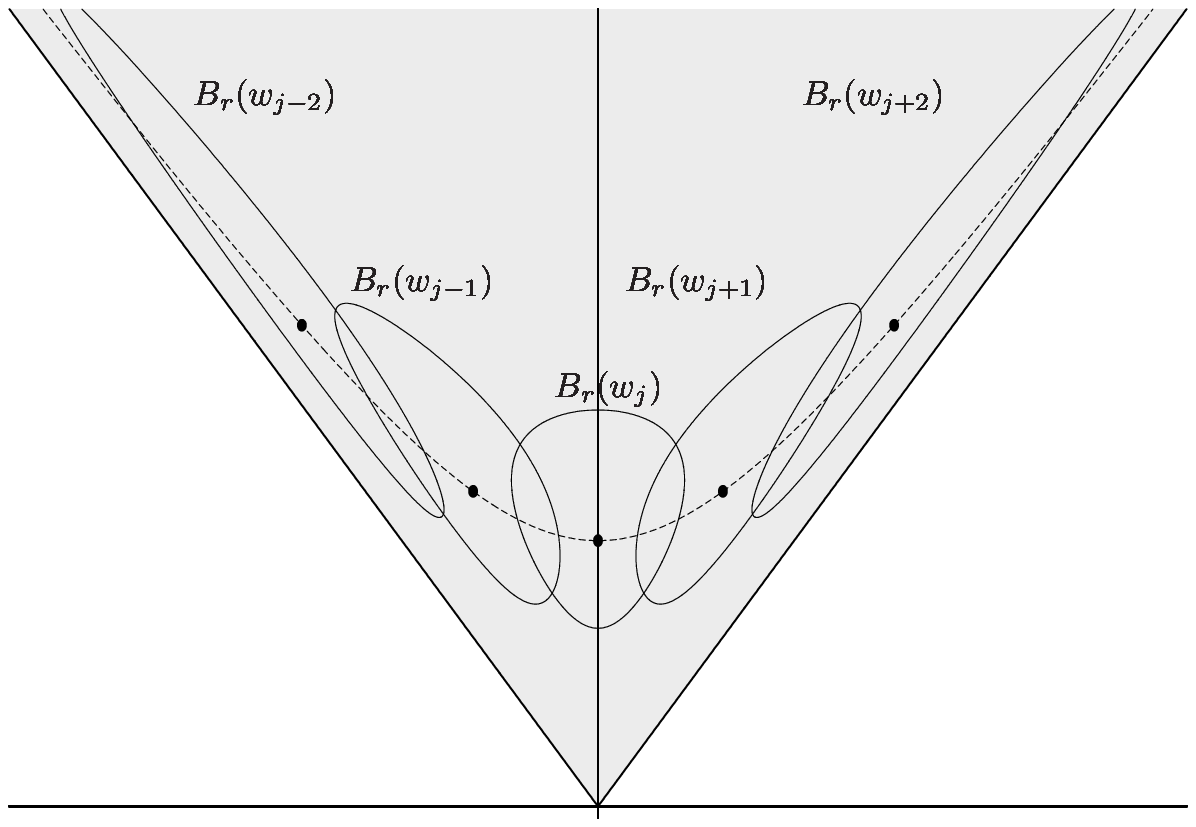}
  }
  
  \subfigure[Cover with translates of $B_r(e)$]{
     \includegraphics[width=0.45\columnwidth]{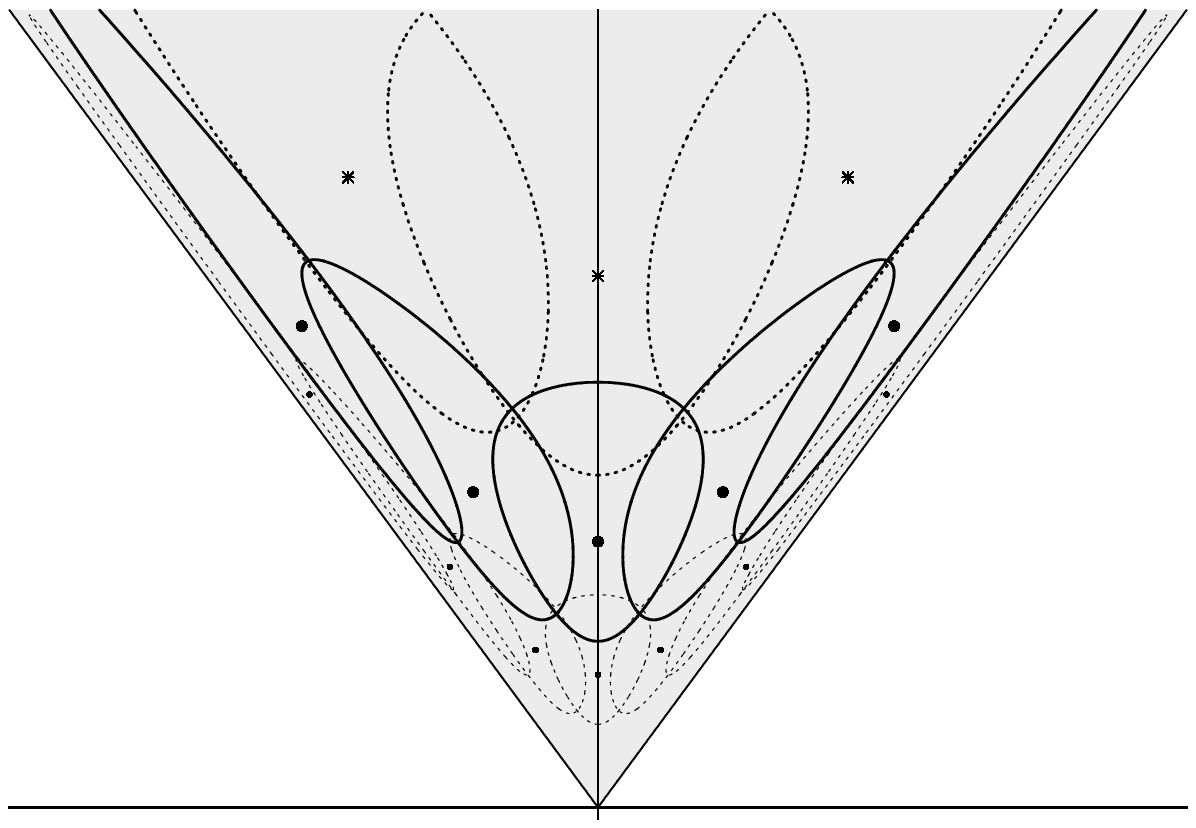}
  }
  \subfigure[Translates of $B_{r/2}(e)$ have no overlap]{
    \includegraphics[width=0.45\columnwidth]{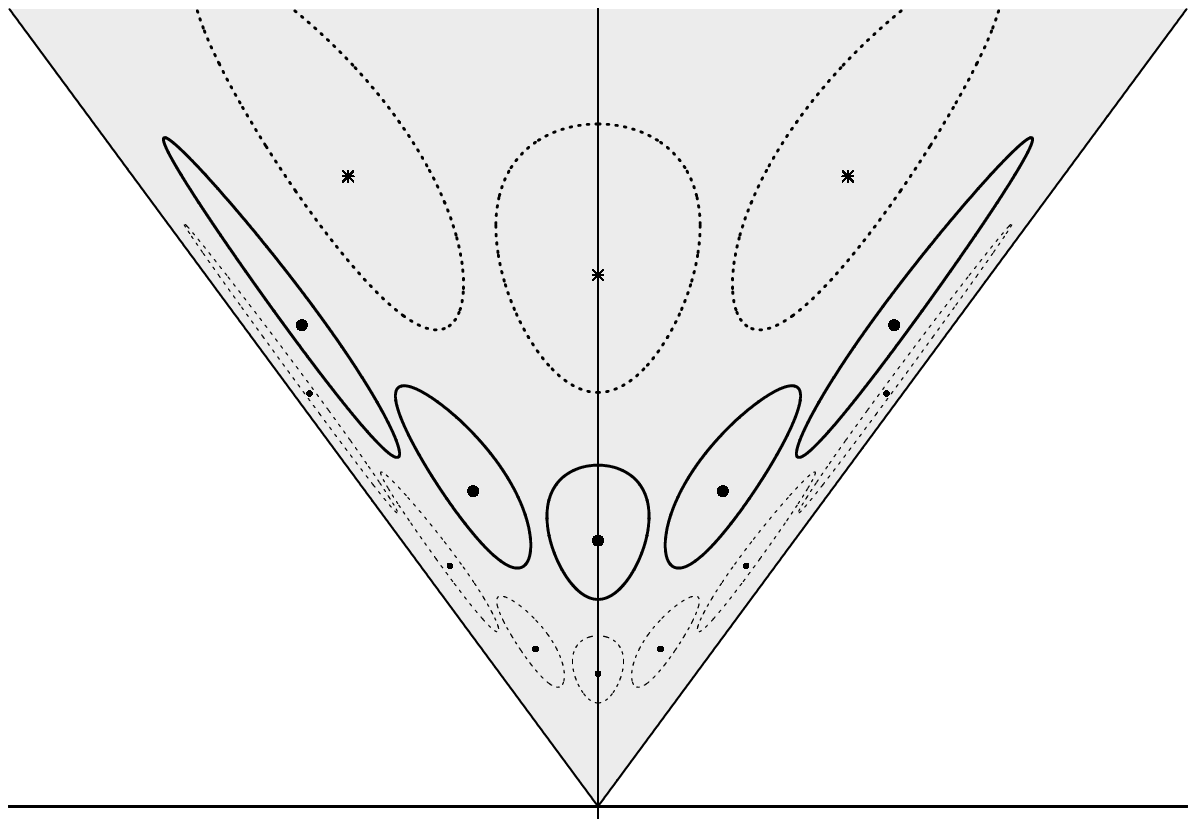}
  }
  \caption{Covering of the cone}  \label{fig:conecover}
\end{figure}
We now construct a smooth partition
of unity subordinate to a cover from Lemma~\ref{lem:covercone}.
Let $0\leq \varphi\leq 1$ be a smooth 
function with support in $B_{2\delta} (e)$ 
such that $\varphi =1$ on $B_{\delta}(e)$. 
Each of the points
$w_j\in \Lambda$ can be written $w_j=\gamma_ja_{t_j}n_{c_j}e$ 
for $g_j = \gamma_j a_{t_j}n_{c_j}\in \mathbb{R}_+AN$
and now we define $\varphi_j(w) = \varphi(g_j^{-1}w)$. Then the 
function $\Phi = \sum_j \varphi_j$ is smooth and 
bounded from above and below (by the finite intersection property), 
and we can finally define the function $\psi_j$ by letting
$\widetilde{\psi}_j = \varphi_j/\Phi$. We then see that
$\widetilde{\psi}_j$ is smooth and with compact support in 
$B_{2\delta}(w_j)$, $\widetilde{\psi}_j =1$ on $B_{\delta/2}(w_j)$
and $\sum_j \widetilde{\psi}_j(w) =1$ for 
all $w\in \Lambda$. Such a partition of unity is called a
Littlewood-Paley decomposition of the cone subordinate
to a Whitney cover.

We note that the convolutions encountered in this section
are distributional convulutions in $\mathbb{R}^n$.
We are now ready to define the Besov spaces
on the light cone as in \cite{Bekolle2004} 
\begin{definition}
  Let $\psi_j$ be a Littlewood-Paley decomposition
  of the cone subordinate to a Whitney cover with lattice
  points $w_j$.
  For $1\leq p,q <\infty$ define the norm
  \begin{equation*}
    \| f\|_{B^{p,q}_s} 
    = \Big( \sum_j \det^{-s}(w_j) \| f*\psi_j\|_p^q \Big)^{1/q}
  \end{equation*}
  then the space $B^{p,q}_s$ consist of the
  $f\in \mathcal{S}_\Lambda'$ for which $\| f\|_{B^{p,q}_s} < \infty$.
\end{definition}
In \cite[Lemma 3.8]{Bekolle2004} it is further proven, that
$B^{p,q}_s$ does not depend (up to norm equivalence) on the
functions $\psi_j$ nor on the Whitney decomposition. We
will use this in the sequel.

\begin{theorem}
  The Besov space $B^{p,q}_{n-s-nq/2}$ corresponds to the coorbit
  $\Co_{\mathcal{S}_\Lambda}^u L^{p,q}_s (G)$ with equivalent norm.
\end{theorem}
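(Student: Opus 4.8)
The plan is to build a dictionary between the continuous wavelet description behind $\Co_{\mathcal{S}_\Lambda}^u L^{p,q}_s(G)$ and the discrete Littlewood--Paley description of $B^{p,q}_{n-s-nq/2}$, and then read off simultaneously the coincidence of the two subspaces of $\mathcal{S}_\Lambda'$ and the equivalence of norms. The key observation is that, for $\Phi\in\mathcal{S}_\Lambda'$ and $h=\gamma a_tn_c\in H$, the slice $b\mapsto W_u(\Phi)(h,b)$ is, up to a fixed constant, $\Phi$ convolved in $\mathbb{R}^n$ with the function whose $B$-Fourier transform is $\gamma^{n/2}\,\overline{\widetilde u(\gamma(a_tn_c)^{-1}\cdot\,)}$; hence $W_u(\Phi)(h,\cdot)$ is a piece of $\Phi$ localized in frequency to the region $\{w:\gamma(a_tn_c)^{-1}w\in\supp\widetilde u\}$, which sits at determinant scale $\asymp\gamma^{-1}$. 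Thus the inner $L^p(db)$-norm in $\|W_u(\Phi)\|_{L^{p,q}_s}$ is a continuous analogue of the Littlewood--Paley quantity $\|\Phi*\psi_j\|_p$.

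I would first fix a compatible geometry. Since $H$ acts simply transitively on $\Lambda$, a Whitney cover $\{w_j=\gamma_j a_{t_j}n_{c_j}e\}$ as in Lemma~\ref{lem:covercone} corresponds, through the frequency-center map $h\mapsto(\text{center of }\supp\widetilde{\pi(h)u})$, to a well-spread set $\{h_j\}\subseteq H$; this map is a diffeomorphism $H\to\Lambda$ that differs from the orbit map only by the inversion $\gamma\mapsto\gamma^{-1}$ of the dilation coordinate, which preserves the left Haar measure $\frac{d\gamma\,dt\,dc}{\gamma}$. Using that $B^{p,q}_{s'}$ is independent of the chosen Whitney cover and partition of unity (\cite[Lemma~3.8]{Bekolle2004}), I would also take the Littlewood--Paley decomposition adapted to $u$, building $\psi_j$ from the bump $\varphi=\widetilde u$; then $\widetilde\psi_j$ is $\widetilde u(g_j^{-1}\cdot\,)$ times the factor $1/\Phi$ from the construction, which after rescaling by $g_j$ has uniformly (in $j$) bounded $L^p$-multiplier norm.

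Next I would sample. The cone analogue of the oscillation estimate Lemma~\ref{lem:osc}, together with the integrability of $v\mapsto W_u(v)$ proved above, shows that $h\mapsto\|W_u(\Phi)(h,\cdot)\|_p$ varies slowly over the cells of the well-spread family, so that
\begin{equation*}
  \|W_u(\Phi)\|_{L^{p,q}_s}^q \;\asymp\; \sum_j \|W_u(\Phi)(h_j,\cdot)\|_p^q\,\gamma(h_j)^s\,\mu(\mathrm{cell}_j),
\end{equation*}
where $\mu=\frac{d\gamma\,dt\,dc}{\gamma^{n+1}}$ differs from left Haar by the factor $\gamma^{-n}$. Comparing the two sides of the Fourier identity of the first paragraph gives $\|W_u(\Phi)(h_j,\cdot)\|_p\asymp\gamma(h_j)^{n/2}\|\Phi*\psi_j\|_p$, and inserting $\gamma(h_j)\asymp\det(w_j)^{-1}$ and $\mu(\mathrm{cell}_j)\asymp\gamma(h_j)^{-n}$ collapses the weight to a single power of $\det(w_j)$, which after tracking the exponents is precisely $\det^{-(n-s-nq/2)}(w_j)$; hence
\begin{equation*}
  \|W_u(\Phi)\|_{L^{p,q}_s}^q \;\asymp\; \sum_j \det^{-(n-s-nq/2)}(w_j)\,\|\Phi*\psi_j\|_p^q \;=\; \|\Phi\|_{B^{p,q}_{n-s-nq/2}}^q .
\end{equation*}
Since both $\Co_{\mathcal{S}_\Lambda}^u L^{p,q}_s(G)$ and $B^{p,q}_{n-s-nq/2}$ are defined as the $\Phi\in\mathcal{S}_\Lambda'$ for which the corresponding (now equivalent) quantity is finite, the two spaces coincide with equivalent norms.

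The main obstacle is the bookkeeping in this last chain: one must juggle four separate sources of exponents --- the normalization $\gamma^{n/2}$ in $\pi$, the discrepancy between the invariant measure on $H$ and the weight $\gamma^{-(n+1)}$ built into $L^{p,q}_s$, the determinant scale $\det(w)^{-1}$ at which the wavelet localizes (the dilation inversion), and the modular behaviour of $H$ --- and verify that they conspire to produce exactly the shift $s\mapsto n-s-nq/2$ rather than a nearby exponent. A secondary point is to establish the cone version of Lemma~\ref{lem:osc} and the uniform-in-$j$ control of the multipliers $1/\Phi$ on $B_{2\delta}(w_j)$; both follow from Lemma~\ref{lem:rapidoncone} and the $g_j$-covariance of the construction, but they are what legitimize the passage between the continuous integral and the discrete sum. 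An essentially equivalent route is to invoke the Feichtinger--Gr\"ochenig atomic decomposition for the (integrable) representation $\pi$ to identify $\Co_{\mathcal{S}_\Lambda}^u L^{p,q}_s(G)$ with a weighted $\ell^{p,q}$ of sampled wavelet coefficients and then match that sequence space with the Besov sequence space; the exponent computation is unavoidable in either case.
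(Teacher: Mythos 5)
Your dictionary (wavelet slice at $h=\gamma a_tn_c$ $=$ frequency localization at determinant scale $\asymp\gamma^{-1}$, cells in $H$ versus Whitney balls in $\Lambda$) is the right one and is close in spirit to the paper's argument, but the two central ``$\asymp$'' claims on which your proof rests are exactly where the work lies, and as stated the per-$j$ one is false. The frequency support of $\pi(h_j)u$ meets a bounded number, but more than one, of the Whitney balls $B_{2\delta}(w_{j'})$, so Young's inequality (with $\|u_h\|_{L^1}=\|u\|_{L^1}$) only gives $\|W_u(\Phi)(h_j,\cdot)\|_p\lesssim\gamma(h_j)^{n/2}\sum_{j'\in J(j)}\|\Phi*\psi_{j'}\|_p$, a finite neighbor sum, not the single term $\|\Phi*\psi_j\|_p$; and the reverse bound $\|\Phi*\psi_j\|_p\lesssim\gamma(h_j)^{-n/2}\|W_u(\Phi)(h_j,\cdot)\|_p$ requires $\widetilde u\equiv 1$ on a neighborhood of $\supp\widetilde\psi_j$ (the cyclic vector of the paper's lemma only satisfies $\widetilde u>1/2$ near $e$), and must hold for all $h$ in the cell, not just at $h_j$, if you want to return to the integral. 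Likewise your sampling step $\int_H\asymp\sum_j$ needs a Plancherel--P\'olya type estimate for $h\mapsto\|W_u(\Phi)(h,\cdot)\|_p$; Lemma~\ref{lem:osc} concerns the scalar kernel $W_u^s(u)$ on the Bergman group and no cone analogue is proved in the paper, nor is it needed there. Once you replace the two ``$\asymp$''s by the correct one-sided neighbor-sum estimates and sum using the finite intersection property, your argument collapses into the paper's actual proof, which is deliberately organized as two one-sided inequalities with different mechanisms: for $B\subseteq\Co$ it partitions the $H$-integral into cells, bounds $\|f*u_{\gamma^{-1}a_tn_c}\|_p\le\sum_{j\in J}\|f*\phi_{i,j}\|_p$ using the translated Littlewood--Paley systems $\phi_{i,j}=\ell_{g_j}\phi_i$ and the independence of the Besov norm of the decomposition; for $\Co\subseteq B$ it chooses $u$ so that $\widetilde\phi_i\widetilde u_{\gamma^{-1}a_tn_c}=\widetilde\phi_i$ on $g_iU$ and averages over the cell --- no sampling of the continuous variable, and no slow-variation lemma, is ever invoked.

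Second, the exponent bookkeeping that you yourself flag as the main obstacle is not actually carried out, and the answer you assert is inconsistent with your own normalizations: with $\gamma(h_j)\asymp\det(w_j)^{-1}$ (your dilation inversion), $\mu(\mathrm{cell}_j)\asymp\gamma(h_j)^{-n}$, and the factor $\gamma(h_j)^{nq/2}$ from the representation, the collapsed weight is $\gamma(h_j)^{\,s+nq/2-n}\asymp\det(w_j)^{\,n-s-nq/2}$, i.e.\ in the convention $\|f\|_{B^{p,q}_{s'}}^q=\sum_j\det^{-s'}(w_j)\|f*\psi_j\|_p^q$ you land on the index $s'=s+nq/2-n$, not on the weight $\det^{-(n-s-nq/2)}(w_j)$ that you state. (The index $s+nq/2-n$ is also the one used throughout the paper's own proof; the index in the displayed theorem differs from it by a sign, so carrying out this computation carefully is precisely what settles which Besov space appears.) In short: the strategy is salvageable, but the per-cell equivalence, the sampling lemma, and the exponent verification --- the three places where the proof actually happens --- are all asserted rather than proved.
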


\begin{proof}
  First show that 
  $B^{p,q}_{s+nq/2-n} \subseteq \Co_{\mathcal{S}_\Lambda}^u L^{p,q}_s (G)$.
  Assume that $f\in B^{p,q}_{s+nq/2-n}$ and that $\widetilde{\phi}_i$ is 
  a Littlewood-Paley decomposition of the cone with lattice
  points $w_i=g_ie = \gamma_i a_{t_i} n_{c_i}e$. Further assume
  that the sets $g_iV$ cover the cone for an 
  open set $V$ with compact closure. Denote by $U$ the subset of $H$
  given by $U=\{g\in H | ge\in V \}$
  Let $u\in\mathcal{S}_\Lambda$ be a non-zero wavelet for
  which $\widetilde{u}$ has compact support containing the identity. By 
  ${u}_{\gamma a_t n_c}$ denote the function
  $${u}_{\gamma a_t n_c}(x)
  = \gamma^{-n} \overline{{u}((\gamma a_t n_c)^{-1}x)},$$
  then 
  \begin{equation*}
    W_u(f)(\gamma a_t n_c, b) 
    = \gamma^{-n/2} \int f(x)\overline{u((\gamma a_t n_c)^{-1}(x-b))}\, dx
    = \gamma^{n/2} f*u_{\gamma a_t n_c}(b).
  \end{equation*}
  Let the disjoint sets $V_i\subseteq \Lambda$ cover
  $\Lambda$ and satisfy $V_i\subseteq g_i V$. 
  Now choose the subsets $U_i$ to be 
  $U_i = \{ g\in H | ge \in V_i \}$.
  We can then write the $L^{p,q}_s$ norm of the wavelet coefficient as
  \begin{align*}
    \| W_u(f) \|_{L^{p,q}_s} 
    &= \Big( \int_H \gamma^{s+nq/2-n} \| f*u_{\gamma a_t n_c} \|_p^q 
    \frac{d\gamma\,dt\,dc}{\gamma^{n+1}}\Big)^{1/q} \\
    &= \Big( \int_H \gamma^{n-s-nq/2} \| f*u_{\gamma^{-1} a_t n_c} \|_p^q 
    \frac{d\gamma\,dt\,dc}{\gamma^{1}}\Big)^{1/q} \\
    &\leq 
    \Big( \sum_i \int_{U_i} \gamma^{n-s-nq/2} 
    \| f*u_{\gamma^{-1} a_t n_c} \|_p^q 
    \frac{d\gamma\,dt\,dc}{\gamma}\Big)^{1/q} \\
    &\leq C\Big( \sum_i \gamma_i^{n-s-nq/2}  \int_{U_i} 
    \| f*u_{\gamma^{-1} a_t n_c} \|_p^q 
    \frac{d\gamma\,dt\,dc}{\gamma}\Big)^{1/q}, \\
  \end{align*}
  where we have used that $\gamma$ is comparable to $\gamma_i=\det(w_i)$ 
  when $\gamma a_tn_c \in U_i$.
  For any $j$ define 
  $\widetilde{\phi}_{i,j}= \ell_{g_j} \widetilde{\phi}_i$.
  Since $\{ \widetilde{\phi}_i\}_{i}$ is a Littlewood-Paley
  decomposition of the cone the systems
  $\{\widetilde{\phi}_{i,j}\}_{j}$ (with index $j$) and
  $\{\widetilde{\phi}_{i,j}\}_{i}$ (with index $i$) are
  also Littlewood-Paley decompositions of the cone.
  For fixed $i$ we thus
  can write $\| f*u_{\gamma^{-1} a_t n_c}\|_p$ as
  \begin{equation*}
    \| f*u_{\gamma^{-1} a_t n_c}\|_p
    = \Big\| \sum_{j\in J} f*u_{\gamma^{-1} a_t n_c}*\phi_{i,j}\Big\|_p
    \leq \sum_{j\in J} \| f*u_{\gamma^{-1} a_t n_c}*\phi_{i,j}\|_p.
  \end{equation*}
  The index set $J$ in this sum is finite, since both $\widetilde{u}$ and
  $\widetilde{\phi}$ are compactly supported and $w_i$ are well-spread. 
  Further the index set $J$ can be
  chosen large enough that it neither depends on $i$ nor
  on $\gamma a_t n_c \in U_i$.
  The $L^1(\mathbb{R}^n)$-norm 
  of $u_{\gamma^{-1} a_t n_c}$ is uniformly bounded from above, in fact 
  $\| u_{\gamma^{-1} a_t n_c} \|_{L^1(\mathbb{R}^n)} 
  = \| u \|_{L^1(\mathbb{R}^n)}$,
  so we obtain that
  \begin{equation*}
    \| f*u_{\gamma^{-1} a_t n_c}\|_p
    \leq \sum_{j\in J} \| f*\phi_{i,j}\|_p.
  \end{equation*}
  Therefore
  \begin{align*}
    \| W_u(f) \|_{L^{p,q}_s} 
    &\leq C \Big(  \sum_i \gamma_i^{n-s-nq/2}  \int_{U_i} 
    \Big(\sum_{j\in J} \| f*\phi_{i,j} \|_p\Big)^q 
    \frac{d\gamma\,dt\,dc}{\gamma}\Big)^{1/q} \\
    &\leq C \Big(  \sum_i \gamma_i^{n-s-nq/2} 
    \Big(\sum_{j\in J} \| f*\phi_{i,j} \|_p\Big)^q \Big)^{1/q},
  \end{align*}
  where we have used that 
  the $U_i\subseteq g_i U$ have uniformly bounded measure.
  The triangle inequality for the $\ell^q$-norm then yields
  \begin{align*}
    \| W_u(f) \|_{L^{p,q}_s} 
    &\leq C \sum_{j\in J}\Big( \sum_i \gamma_i^{n-s-nq/2} 
    \| f*\phi_{i,j} \|_p^q 
    \Big)^{1/q}.
  \end{align*}
  Now set 
  $\gamma_{i,j}=\det(g_jw_i)=\gamma_i \gamma_j$, then,
  since the sum over $J$ is finite,  
  each $\gamma_i$ is comparable to
  $\gamma_{i,j}$ for all $j$, and   
  finally we obtain
  \begin{align*}
    \| f \|_{L^{p,q}_s} 
    &\leq C \sum_{j\in J}\Big( \sum_i \gamma_i^{n-s-nq/2} 
    \| f*\phi_{i,j} \|_p^q 
    \Big)^{1/q} \\
    &\leq C \sum_{j\in J} \Big( \sum_i \gamma_{i,j}^{n-s-nq/2} 
    \| f*\phi_{i,j} \|_p^q 
    \Big)^{1/q} \\
    &= C \sum_{j\in J} \Big( \sum_i \det(g_iw_i)^{-(s+nq/2-n)} 
    \| f*\phi_{i,j} \|_p^q 
    \Big)^{1/q}.
  \end{align*}
  Each of the $\{ \phi_{i,j}\}_{i}$ form a Littlewood-Paley 
  decomposition of the cone, so 
  the terms 
  $$
  \Big( \sum_i \det(g_jw_i)^{-(s+nq/2-n)} \| f*\phi_{i,j} \|_p^q
  \Big)^{1/q}
  $$
  are Besov space norms. Each norm is 
  comparable to
  $\| f\|_{B^{p,q}_{s+nq/2-n}}$ by \cite[Lemma 3.8 and expression
  (3.20)]{Bekolle2004}. 
  This shows, that there is a $C>0$ such that
  $$\| W_u(f)\|_{L^{p,q}_s} \leq C \| f \|_{B^{p,q}_{s+nq/2-n}}.$$

  \noindent
  It remains to show that
  $\Co_{\mathcal{S}_\Lambda}^u L^{p,q}_s (G) \subseteq B^{p,q}_{s+nq/2-n}$.
  Let $\widetilde{\phi}$ be the smooth function with support in
  $B_{2\delta}(e)$ used to generate 
  a Littlewood-Paley decomposition. 
  The coorbit spaces are independent of the wavelet $u$, so 
  we choose $u$ and a compact neighbourhood $U\subseteq H$ 
  such that $U\supp(\widetilde{\phi})$ is contained 
  in $\widetilde{u}^{-1}(\{ 1\})$. It holds that
  the $g_iU$'s have finite overlap (the $g_i$'s come from the lattice points
  $w_i = g_ie$ which are well-spread). 
  This means that $\supp(\widetilde{\phi}_i)$
  is contained in $(\widetilde{u}_{\gamma^{-1} a_t n_c})^{-1}(\{ 1\})$ for 
  $\gamma a_t n_c \in g_iU$. Therefore 
  $\widetilde{\phi}_i\widetilde{u}_{\gamma^{-1} a_t n_c} = \widetilde{\phi}_i$ 
  for all $\gamma a_t n_c \in g_i U$. We exploit this to see that
  \begin{align*}
    \|f*\phi_i \|_p^q 
    &=\frac{1}{|U|} \int_{g_iU} \|f*\phi_i \|_p^q 
    \frac{\,d\gamma \,dt\,dc}{\gamma} \\
    &= \frac{1}{|U|} \int_{g_iU} \|f*\phi_i*u_{\gamma^{-1} a_t n_c} \|_p^q 
    \frac{\,d\gamma \,dt\,dc}{\gamma} \\
    &\leq C \int_{g_iU} \|f*u_{\gamma^{-1} a_t n_c} \|_p^q 
    \frac{\,d\gamma \,dt\,dc}{\gamma},
  \end{align*}
  where  $\frac{\,d\gamma \,dt\,dc}{\gamma}$ is the invariant measure
  on the group $H$. In the last step
  we used that $\| \phi_i \|_{L^1(\mathbb{R}^n)}$ is uniformly bounded 
  (see \cite[Proposition 3.2(3)]{Bekolle2004}).
  For $\gamma a_t n_c \in g_iU$ we see that $\gamma$ is comparable to
  $\gamma_i$. Further the sets $g_iU$ overlap a finite amount of times, so
  we obtain the estimate
  \begin{align*}
    \sum_i \gamma_i^{-(s+nq/2-n)} \|f*\phi_i \|_p^q 
    &\leq C \int_H \gamma^{-(s+nq/2-n)}\|f*u_{\gamma^{-1} a_t n_c} \|_p^q 
    \frac{\,d\gamma \,dt\,dc}{\gamma} \\
    &= C \int_H \gamma^{s+nq/2-n}\|f*u_{\gamma a_t n_c} \|_p^q 
    \frac{\,d\gamma \,dt\,dc}{\gamma}.
  \end{align*}
  We use this 
  to find the estimate of the Besov space norm
  \begin{align*}
    \| f\|_{B^{p,q}_{s+nq/2-n}}
    &=
    \Big( \sum_i \det(w_i)^{-(s+nq/2-n)} \|f*\phi_i \|_p^q \Big)^{1/q} \\
    &= \Big( \sum_i \gamma_i^{-(s+nq/2-n)} \|f*\phi_i \|_p^q \Big)^{1/q} \\
    &\leq 
    C \Big( \int_H \gamma^{s+nq/2}\|f*u_{\gamma a_t n_c} \|_p^q 
    \frac{\,d\gamma \,dt\,dc}{\gamma^{n+1}} \Big) \\
    &=
    C \| W_u(f)\|_{L^{p,q}_s}.
  \end{align*}
  
  This proves the equivalence of the norms of the two spaces.
\end{proof}

\begin{remark}
  The wavelet characterization of Besov spaces on forward light cones
  seems to generalize to all symmetric cones. 
  We will deal with this in future work.
\end{remark}

\bibliographystyle{plain}
\bibliography{coorbit}
\end{document}